\newtheoremstyle{special}%
{}%
{}%
{}%
{}%
{\scshape}%
{.}%
{.5em}%
{}
\newtheorem{theorem}{Theorem}
\newtheorem{proposition}[theorem]{Proposition}
\newtheorem{lemma}[theorem]{Lemma}
\newtheorem{cor}[theorem]{Corollary}
\theoremstyle{special}
\newtheorem{remark}[theorem]{Remark}
\renewcommand{\epsilon}{\varepsilon}
\renewcommand{\L}{\mathcal{L}}
\def\Id{\text{\rm Id}}
\def\N{\mathbb{N}}
\def\Z{\mathbb{Z}}
\def\R{\mathbb{R}}
\def\C{\mathcal C}
\DeclareMathOperator{\esssup}{esssup}
\DeclareMathOperator{\essinf}{essinf}
\title{Linear response for random and sequential intermittent maps}
\author{Davor Dragi\v cevi\' c
\thanks{Faculty of Mathematics, University of Rijeka, Radmile Matej\v ci\' c 2, 51000 Rijeka, Croatia.\\ Email: ddragicevic@math.uniri.hr}
\and
Cecilia Gonz\'alez-Tokman \thanks{School of Mathematics and Physics, The University of Queensland, Brisbane, QLD 4072, Australia.\\ Email: cecilia.gt@uq.edu.au}
\and
Julien Sedro \thanks{Email: sedro.math@proton.me}
}
\begin{document}
	\maketitle

 \begin{abstract}
This work establishes a quenched (trajectory-wise) linear response formula for random intermittent dynamical systems, consisting of Liverani-Saussol-Vaienti maps with varying parameters. This result complements recent annealed (averaged) results in the i.i.d setting. 
As an intermediate step, we show existence, uniqueness and statistical stability of the random absolutely continuous invariant probability measure (a.c.i.m.) for such non-uniformly expanding systems. 
Furthermore, we investigate sequential
intermittent dynamical systems of this type and establish a linear response formula. 
Our arguments rely on the cone technique introduced by Baladi and Todd and further developed by Lepp{\"a}nen.
 We also demonstrate that sequential systems exhibit a subtle distinction from both random and autonomous settings: they may possess infinitely many sequential absolutely continuous equivariant densities. However, only one of these corresponds to an SRB state in the sense of Ruelle.

    \vspace{3mm}

\noindent {\it Keywords}: linear response; random dynamical systems; intermittent maps; sequential dynamics.
\vspace{2mm}

\noindent{\it 2020 MSC}: 37D25, 37H05
 \end{abstract}
	\section{Introduction}

Intermittent maps, such as the ones introduced by Liverani, Saussol and Vaienti in \cite{LSV} (called LSV maps), are widely studied non-uniformly expanding systems exhibiting a combination of chaotic and regular behavior. Trajectories of these systems behave chaotically on a large part of the state space, but when they visit small neighbourhoods of the neutral fixed point at the origin, it takes a long time before they return to the chaotic regime. The ergodic and statistical properties of intermittent systems have been widely investigated in the last few decades. We refer to~\cite{alves2020nonuniformly} for a detailed survey. 
 Another type of intermittency, called critical intermittency, has been recently investigated in 
 \cite{HomburgEtAl22, KalleZeegers23}.

Linear response takes place when a 
(unique) physical measure of a system changes differentiably with respect to perturbations of the dynamics. This is a strong form of stability under parameter changes, whose rigorous mathematical study has been initiated 
by Ruelle~\cite{Ruelle-diffSRB}. Since then, the existence (and the lack of) linear response has been investigated  for wide classes of deterministic dynamical systems including: smooth dynamical systems~\cite{baladi2018dynamical,sedro2018regularity}, piecewise expanding maps on the interval~\cite{baladi2007susceptibility, baladi2008linear}, uniformly hyperbolic diffeomorphisms and flows~\cite{butterley2007smooth, butterley2013robustly, gouezel2006banach, Ruelle-diffSRB} as well as partially hyperbolic systems~\cite{dolgopyat2004differentiability}. 

The linear response for LSV maps has been obtained in three independent works that appeared around the same time. Namely, it was established by Baladi and Todd~\cite{BT} by relying on cone techniques, and by  Bahsoun and Saussol~\cite{bahsoun2016linear} as well as  Korepanov~\cite{korepanov2016linear} using inducing type arguments. More recently, Lepp{\"a}nen~\cite{L} has refined the Baladi-Todd approach and established linear response for a family of maps with a neutral fixed point and a non-flat critical point, which includes LSV maps.

The main objective of the present paper is to establish  linear response for LSV maps in random and nonautonomous (sequential) contexts. More precisely, we deal with two-sided sequences of maps $(T_n)_{n\in \Z}$, where each $T_n$ is an LSV map.
The dynamics is generated by compositions of the form
\[
T_n^m:=T_{n+m-1}\circ \ldots \circ T_{n+1}\circ T_n \quad \text{for $n\in \Z$ and $m\in \N$.}
\]
 In the case of random dynamics, map sequences are obtained from trajectories of an invertible, ergodic, probability preserving map $\sigma \colon (\Omega, \mathbb P) \to (\Omega, \mathbb P)$. That is $T_n=T_{\sigma^n \omega}$, $\omega \in \Omega, \ n\in \mathbb Z$.

A (nonautonomous) absolutely continuous invariant measure (a.c.i.m) for such system will be a sequence $(\mu_n)_{n\in \mathbb Z}$ of probability measures on the unit interval  which are absolutely continuous with respect to the Lebesgue measure $m$ and satisfy that 
\[
(T_n)_*\mu_n=\mu_{n+1} \quad n\in \mathbb Z,
\]
where $(T_n)_*\mu_n$ denotes the push-forward of $\mu_n$ with respect to $T_n$. This concept is related to the concept of SRB states in the nonautonomous setting, discussed by Ruelle~\cite[Section 4]{Ruelle-diffSRB}, although the latter requires an extra condition. 
Namely, SRB states are the limit of Lebesgue measure, $m$, pushed forward from the infinite past, $\mu_k=\lim_{n\to \infty} (T_{k-n})_* (T_{k-n+1})_*\dots (T_{k-1})_* m$.
 We show that this extra condition is essential to ensure uniqueness, as otherwise sequential systems often have multiple sequential a.c.i.m.s.  Another similar concept is presented by Young in~\cite[Section 4]{young2017generalizations}, although that work deals with one-sided sequences of maps. 
 
Similarly, a random a.c.i.m $(\mu_\omega)_{\omega \in \Omega}$ satisfies $(T_\omega)_*\mu_\omega=\mu_{\sigma\omega}$ for  $\mathbb P$-$\text{a.e. } \omega \in \Omega$, and 
an extra measurability condition, namely that $(\mu_\omega)_{\omega \in \Omega}$ is a random measure. That is, it provides the disintegration of a measure $\mu$ on $\Omega \times [0, 1]$, with marginal $\mathbb P$.

Our goal is to study the dependence of random a.c.i.ms $(\mu_\omega)_{\omega \in \Omega}$, respectively sequencial a.c.i.ms $(\mu_n)_n$, under small perturbations of the family $(T_\omega)_{\omega \in \Omega}$, respectively $(T_n)_{n\in \Z}$. More precisely, in the random setting, suppose that $I$ is a small interval around $0$ in $\mathbb R$ and that we are given a parametrized family of LSV maps $(T_{\omega, \epsilon})_{\omega \in \Omega}$, $\epsilon \in I$ where we view $T_{\omega,  \epsilon}$ as a ``small'' perturbation of $T_\omega:=T_{\omega,  0}$. Since both of these maps belong to the LSV family,  this means that the parameter associated to $T_{\omega,  \epsilon}$ is close to the parameter associated to $T_\omega$.  By $(\mu_{\omega,  \epsilon})_{\omega \in \Omega}$ we will denote the (unique) random a.c.i.m corresponding to the sequence $(T_{\omega, \epsilon})_{\omega \in \Omega}$, provided by Proposition~\ref{p1}.
We are then interested in formulating sufficient conditions under which, for $\mathbb P$-$  \text{a.e. } \omega \in \Omega$ and suitable observable $\phi$, the map $\epsilon \to \int \phi\, d\mu_{\omega,  \epsilon}$ is continuous (quenched statistical stability; see Theorem~\ref{SS}) or differentiable at $0$ 
(quenched linear response; see Theorems~\ref{thm:LR} and \ref{thm:LqLR}). The sequential setting is   treated in Theorem~\ref{thm:seqLR}.
We note that there is  a series of recent results  devoted to the quenched linear response for various classes of random dynamical systems~\cite{crimmins2024spectral,  dragivcevic2023quenched, dragicevic2024effective, dragivcevic2023statistical, sedro2021regularity}, although in these works the case of random intermittent dynamics was not considered. Hence, our main result complements the recent annealed linear response result for random i.i.d compositions of LSV maps obtained in~\cite{bahsoun2020linear}.

On the other hand, to the best of our knowledge, Ruelle~\cite[Section 4]{Ruelle-diffSRB} was the first to discuss linear response in nonautonomous (sequential) setting. He
briefly outlined a strategy to obtain linear response for a class of dynamics formed by two-sided sequences of maps $(T_n)_{n\in \mathbb Z}$, where each $T_n$ is a sufficiently small perturbation of a fixed uniformly hyperbolic map, and established a linear response formula, valid when a fixed map is repeatedly applied, except for a finite number of steps, where (small) perturbations take place. A possibility for establishing linear response for sequential expanding dynamics was also indicated by Sedro and Rugh~\cite{sedro2021regularity}.
The present work is the first one which considers the linear response problem in this 
nonautonomous (sequential) nonuniformly hyperbolic  setting. Previous works on this type of dynamics have been devoted either to decay of correlations or to limit theorems (see~\cite{AHNTV, KL, nicol2018central, nicol2021large} and references therein). 

Our arguments rely on the work of  Lepp{\"a}nen~\cite{L}, which in turn is inspired by the earlier work of Baladi and Todd~\cite{BT}. 
However, nontrivial technical changes are necessary to account for the non-autonomous nature of the problem under consideration.

The paper is organized as follows.  Random absolutely continuous invariant measures are identified in Section~\ref{S:SeqDens}, and their quenched statistical stability is established in Section~\ref{S:SeqStatStab}. Section~\ref{S:LinRep} presents results on linear response for bounded observables, including the random setting in Theorem~\ref{thm:LR} and Corollary~\ref{cor:ALR}, and the sequential setting in Theorem~\ref{thm:seqLR}. A general result on non-uniqueness of sequential a.c.i.m.s is presented in Section~\ref{ssec:seq}, Proposition~\ref{L:non-uniq}. 
Section~\ref{sec:lqObs} establishes linear response for random intermittent maps for a class of $L^q$ observables.
 
\paragraph{\textbf{Convention.}} Throughout this work, $C$ and $D$ will denote positive constants whose precise values may change from one appearance to the next. Dependence on parameters such as $\alpha$ will be indicated with subscripts, e.g. $C=C_\alpha$, as needed.

	\section{Existence and uniqueness of random equivariant density}\label{S:SeqDens}
 \subsection{Preliminaries}
	For each parameter $\alpha \in [0,1)$, we consider the associated LSV-map $T_\alpha \colon [0,1] \to [0,1]$ given by 
	\[
	T_\alpha (x)=\begin{cases}
		x(1+2^\alpha x^\alpha) & 0 \le x<\frac 1 2; \\
		2x-1 & \frac 1 2 \le x \le 1.
	\end{cases}
	\]
	The transfer operator associated to $T_\alpha$ will be denoted by 
	$\mathcal L_\alpha \colon L^1(m) \to L^1(m)$, where $m$ denotes the Lebesgue measure on $[0,1]$. We recall that 
	\[
	(\mathcal L_\alpha \phi)(x)=\sum_{T_\alpha(y)=x}\frac{\phi(y)}{T_\alpha'(y)}, \quad x\in [0,1], \ \phi\in L^1(m).
	\]
We have that 	\begin{equation}\label{dual}
		\int_0^1 (\mathcal L_\alpha \phi)\psi\, dm=\int_0^1 \phi (\psi\circ T_\alpha)\, dm,
	\end{equation}
	for $\phi\in L^1(m)$ and $\psi\in L^\infty(m)$.  In particular, it follows from~\eqref{dual} applied to $\psi\equiv 1$ that 
	\begin{equation}\label{intpres}
		\int_0^1 \mathcal L_\alpha \phi\, dm=\int_0^1 \phi\, dm, \quad \phi\in L^1(m).
	\end{equation}
 
	Denoting, for $0 \le x\le \frac 1 2$, by $f_\alpha(x)=x(1+2^\alpha x^\alpha)\in[0,1]$ the first branch of $T_\alpha$, we introduce its inverse $g_\alpha=f_{\alpha}^{-1}:[0,1]\to[0,1/2]$. It is easy to see that, for $\alpha\in[0,1]$, $1\le f'_\alpha(x)\le 3$, so that
	\begin{equation}\label{eq:trivialboundong}
	\frac 13\le g'_\alpha(x)\le 1\quad\text{and}\quad \frac x3\le g_{\alpha}(x)\le x.
	\end{equation}
	Let $N_\alpha$ be the  transfer operator associated to $f_\alpha$. Then,
	\begin{equation}
		N_\alpha\phi:= g_\alpha'\phi\circ g_\alpha.
	\end{equation}
	Following \cite{BT}, we will also denote $v_\alpha(x):=\partial_\alpha T_\alpha(x)=2^\alpha x^{1+\alpha}\log(2x)$ and 
	\begin{equation}\label{def:mapperturb}
	X_\alpha:=v_\alpha\circ g_\alpha=2^\alpha g_\alpha^{1+\alpha}\log(2g_\alpha).
	\end{equation}
	
 \subsection{Cones}
	Let $X$ denote the identity map on $[0,1]$ and fix $\alpha \in (0,1)$.
	For $a>1$ we introduce the convex and closed cone $\mathcal C_*(a)$ that consist of all $\phi\in C^0(0,1] \cap L^1(m)$ with the following properties:
	\begin{itemize}
		\item $\phi\ge 0$ and $\phi$ decreasing;
		\item $X^{\alpha+1}\phi$ increasing;
		\item $\int_0^x \phi(t)\, dt \le ax^{1-\alpha}m(\phi)$ for $0<x\le 1$, where $m(\phi)=\int_0^1 \phi(x)\, dm(x)$.
	\end{itemize}

\begin{remark}\label{n06}
\begin{itemize}
\item Let $\phi \in \mathcal C_*(a)$. Since $\phi$ is decreasing,
\[
x\phi(x)\le \int_0^x \phi(t)\, dt \le ax^{1-\alpha}m(\phi),
\]
and consequently
\[
\phi(x)\le ax^{-\alpha}m(\phi), \quad 0<x\le 1.
\]
\item Take $\phi \in \C_\ast(a)\cap C^1(0, 1]$. Since $X^{\alpha+1}\phi$ is increasing and $\phi$ is decreasing, we have that 
\[
(\alpha+1)x^\alpha\phi(x) +x^{\alpha+1}\phi'(x) \ge 0 \quad \text{and} \quad \phi'(x)\le 0,  \quad x\in (0, 1].
\]
Consequently, 
\[
|\phi'(x)| \le \frac{\alpha+1}{x}\phi(x), \quad x\in (0, 1].
\]
\end{itemize}
\end{remark}

\begin{lemma}\label{lem2}
Let $\alpha \in (0, 1)$. Then, there exists $a=a(\alpha)>1$ such that  
\[
\L_\beta (\mathcal C_*(a))\subset \mathcal C_*(a), \quad 0<\beta \le \alpha.
\]
\end{lemma}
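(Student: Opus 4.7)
The plan is to verify each of the three defining properties of $\mathcal C_*(a)$ for $\mathcal L_\beta\phi$ with $\phi \in \mathcal C_*(a)$, writing $\mathcal L_\beta\phi(x) = N_\beta\phi(x) + \tfrac{1}{2}\phi(\tfrac{x+1}{2})$ as the sum of left- and right-branch contributions. Positivity is immediate. Monotone decrease of the right-branch part is obvious since $\phi$ is decreasing; for $N_\beta\phi$, differentiating gives
\[
(N_\beta\phi)' = g_\beta''\,(\phi \circ g_\beta) + (g_\beta')^2\,(\phi' \circ g_\beta),
\]
which is $\le 0$ because $\phi \ge 0$, $\phi' \le 0$, and $g_\beta$ is concave ($g_\beta'' \le 0$, since $f_\beta''(y) = \beta(1+\beta)2^\beta y^{\beta-1} \ge 0$).

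For monotonicity of $X^{\alpha+1}\mathcal L_\beta\phi$, I would write each branch as a product of two non-negative increasing factors. On the right branch, setting $y=(x+1)/2$, one has $x^{\alpha+1}\phi((x+1)/2) = (2 - 1/y)^{\alpha+1}\,(y^{\alpha+1}\phi(y))$, and both factors increase in $y$. On the left branch,
\[
x^{\alpha+1} N_\beta\phi(x) = \left(\frac{x}{g_\beta(x)}\right)^{\alpha+1}\!g_\beta'(x) \cdot \bigl(g_\beta(x)^{\alpha+1}\phi(g_\beta(x))\bigr).
\]
The second factor is an increasing function (from the cone) composed with the increasing map $g_\beta$. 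Using the identities $x = g_\beta(x)(1 + 2^\beta g_\beta(x)^\beta)$ and $g_\beta'(x) = (1 + (1+\beta) 2^\beta g_\beta(x)^\beta)^{-1}$, the first factor becomes $(1+w)^{\alpha+1}/(1 + (1+\beta)w)$ with $w := 2^\beta g_\beta(x)^\beta$. Differentiating its logarithm in $w$ yields a numerator of sign $(\alpha-\beta) + \alpha(1+\beta)w$, which is non-negative precisely when $\beta \le \alpha$; this is exactly where the hypothesis enters.

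The delicate step is the integral bound. By the duality relation \eqref{dual} with $\psi = \mathbf{1}_{[0,x]}$,
\[
\int_0^x \mathcal L_\beta\phi\,dm = \int_0^{g_\beta(x)}\phi\,dm + \int_{1/2}^{(x+1)/2}\phi\,dm \le a\,g_\beta(x)^{1-\alpha}m(\phi) + \tfrac{x}{2}\,\phi(1/2).
\]
To avoid circularity, I would bound $\phi(1/2)$ independently of $a$ by using that $X^{\alpha+1}\phi$ is increasing: $\phi(t) \ge (1/2)^{\alpha+1}\phi(1/2)\,t^{-(\alpha+1)}$ on $[1/2,1]$, and integrating yields $\phi(1/2) \le C_\alpha\,m(\phi)$ with $C_\alpha := \alpha\,2^{\alpha+1}/(2^\alpha-1)$. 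It then suffices to show that, for some $a$ depending only on $\alpha$,
\[
a\bigl[x^{1-\alpha} - g_\beta(x)^{1-\alpha}\bigr] \ge \tfrac{C_\alpha}{2}\,x \qquad \text{for all } x \in (0,1],\ \beta \in (0,\alpha].
\]
Writing $g_\beta(x)/x = (1+u)^{-1}$ with $u := 2^\beta g_\beta(x)^\beta \in [0,1]$, the mean value theorem gives $1 - (1+u)^{-(1-\alpha)} \ge (1-\alpha)\,2^{\alpha-2} u$, and \eqref{eq:trivialboundong} (i.e., $g_\beta(x) \ge x/3$) gives $u \ge (2/3)^\beta x^\beta$. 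Combining,
\[
\frac{C_\alpha\,x^\alpha}{2\,[1 - (g_\beta(x)/x)^{1-\alpha}]} \le \frac{C_\alpha\,(3/2)^\beta\,x^{\alpha-\beta}}{(1-\alpha)\,2^{\alpha-1}} \le \frac{C_\alpha\,(3/2)^\alpha}{(1-\alpha)\,2^{\alpha-1}},
\]
using $\beta \le \alpha$ and $x \le 1$ in the last step; choosing $a$ exceeding this quantity (and $a>1$) closes the argument.

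The main obstacle is the integral bound. Naive estimates fail because $g_\beta(x)/x \to 1$ as $x \to 0$, which makes the gap $x^{1-\alpha} - g_\beta(x)^{1-\alpha}$ small; one must show this gap scales like $x^{1-\alpha+\beta}$ for small $x$ and dominates the right-branch contribution of order $x$, which is possible exactly when $\beta \le \alpha$. A secondary subtlety is circularity: the bound on $\phi(1/2)$ must be derived from the pointwise cone structure ($X^{\alpha+1}\phi$ increasing) rather than from the integral inequality itself, so that the resulting constant $a$ depends only on $\alpha$.
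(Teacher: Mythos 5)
Your argument is correct and self-contained, whereas the paper simply cites \cite[Lemma 2.1]{L} (applied with $\gamma=(\alpha,1)$) for this invariance. Your route is the natural direct verification of the three cone conditions branch by branch, and the key observations are sound: the sign analysis $(\alpha-\beta)+\alpha(1+\beta)w\ge 0$ for the left-branch factor is exactly where the constraint $\beta\le\alpha$ enters; the integral estimate correctly isolates the gap $x^{1-\alpha}-g_\beta(x)^{1-\alpha}\gtrsim x^{1-\alpha+\beta}$ (which dominates the right-branch contribution $\sim x$ precisely for $\beta\le\alpha$, $x\le1$); and your $a$-free bound $\phi(1/2)\le C_\alpha m(\phi)$, extracted from the monotonicity of $X^{\alpha+1}\phi$, is essential to avoid circularity. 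The computation $\int_0^x\L_\beta\phi\,dm=\int_0^{g_\beta(x)}\phi\,dm+\int_{1/2}^{(x+1)/2}\phi\,dm$ via duality with $\mathbf{1}_{[0,x]}$, and the final mean-value bound $1-(1+u)^{-(1-\alpha)}\ge(1-\alpha)2^{\alpha-2}u$ on $u\in[0,1]$, all check out. In short, you give the elementary proof that the paper outsources to Lepp\"anen; the paper's choice buys brevity and consistency of notation with \cite{L}, while your version is instructive and makes the role of $\beta\le\alpha$ transparent.

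One small technical point you should repair: the cone $\C_\ast(a)$ only requires $\phi\in C^0(0,1]$, so you are not entitled to differentiate $\phi$. Where you write $(N_\beta\phi)'=g_\beta''(\phi\circ g_\beta)+(g_\beta')^2(\phi'\circ g_\beta)$, replace this by the direct observation that $N_\beta\phi=g_\beta'\cdot(\phi\circ g_\beta)$ is a product of two nonnegative decreasing functions ($g_\beta'$ is decreasing since $g_\beta$ is concave, and $\phi\circ g_\beta$ is decreasing since $\phi$ is decreasing and $g_\beta$ increasing), hence decreasing; this is the same factorization idea you already use for the $X^{\alpha+1}$-monotonicity, so no new ingredient is needed. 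The differentiation of $w\mapsto(1+w)^{\alpha+1}/(1+(1+\beta)w)$ is of course fine, since that is an explicit smooth function independent of $\phi$.
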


\begin{proof}
It follows from~\cite[Lemma 2.1]{L} applied to $\gamma=(\alpha, 1)$.
\end{proof}

The following result is similar to~\cite[Lemma 3.3]{L}. 
\begin{lemma}\label{insidecone}
Let $F\in C^2(0, 1]$ be such that $m(F)=0$ and with the property that there exist $C_i>0$ for $i\in \{1, 2\}$ and $0<\delta \le \alpha$ so that
\[
|F(x)| \le C_1ax^{-\delta} \quad \text{and} \quad |F'(x)| \le C_2ax^{-\delta-1},
\]
for $x\in (0, 1]$. Then, $F+\lambda x^{-\alpha}$ belongs to $\mathcal C_*(a)$ if 
\[
\lambda \ge \max \left \{ C_1 a, \frac{C_2a}{\alpha}, (2C_1+C_2)a, \frac{C_1a}{a-1} \right \}.
\]
\end{lemma}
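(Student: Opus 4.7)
The plan is to set $\phi := F + \lambda x^{-\alpha}$ and verify, one by one, each of the three defining properties of $\mathcal C_\ast(a)$. Throughout, the key technical device is the elementary observation that since $\delta \le \alpha$ and $x \in (0,1]$, we have $x^{-\delta} \le x^{-\alpha}$ and $x^{-\delta-1} \le x^{-\alpha-1}$, which allows me to absorb $F$ and $F'$ into the dominant $\lambda x^{-\alpha}$ term. Since $F \in C^2(0,1]$ and $|F(x)| \le C_1 a x^{-\delta}$ with $\delta \le \alpha < 1$, the function $\phi$ automatically lies in $C^0(0,1] \cap L^1(m)$, so I only need to check the three listed properties.

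First, I would address nonnegativity and monotonicity. The bound $F(x) \ge -C_1 a x^{-\delta} \ge -C_1 a x^{-\alpha}$ immediately yields $\phi(x) \ge (\lambda - C_1 a)\,x^{-\alpha} \ge 0$ provided $\lambda \ge C_1 a$. For monotonicity, differentiating gives $\phi'(x) = F'(x) - \alpha \lambda x^{-\alpha-1}$, and using $|F'(x)| \le C_2 a x^{-\delta-1} \le C_2 a x^{-\alpha-1}$ we obtain $\phi'(x) \le (C_2 a - \alpha \lambda)\,x^{-\alpha-1} \le 0$ as soon as $\lambda \ge C_2 a/\alpha$.

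Next, for the condition that $X^{\alpha+1}\phi$ is increasing, I would compute
\[
\frac{d}{dx}\bigl(x^{\alpha+1}\phi(x)\bigr) = \lambda + (\alpha+1) x^{\alpha} F(x) + x^{\alpha+1} F'(x),
\]
and bound the last two terms in absolute value by $\bigl((\alpha+1)C_1 + C_2\bigr) a\, x^{\alpha-\delta} \le (2C_1+C_2)a$, using $\alpha+1 \le 2$ and $x^{\alpha-\delta} \le 1$. Hence the derivative is nonnegative whenever $\lambda \ge (2C_1+C_2)a$. Finally, for the integral condition, the normalization $m(F)=0$ gives $m(\phi) = \lambda \int_0^1 x^{-\alpha}\,dx = \lambda/(1-\alpha)$, so the required inequality becomes
\[
\int_0^x F(t)\,dt \;\le\; (a-1)\frac{\lambda}{1-\alpha} x^{1-\alpha}.
\]
Bounding the left-hand side by $\int_0^x C_1 a\, t^{-\delta}\,dt = C_1 a\, x^{1-\delta}/(1-\delta) \le C_1 a\, x^{1-\alpha}/(1-\alpha)$ (invoking $\delta \le \alpha$ once more), the inequality holds as soon as $\lambda \ge C_1 a/(a-1)$.

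Taking $\lambda$ above the maximum of the four thresholds produced in these four verifications yields exactly the hypothesis stated in the lemma, and the claim follows. No single step is genuinely difficult; the only subtlety worth flagging is to use the inequality $\delta \le \alpha$ in the correct direction at each step (so as to convert powers of $x$ with exponent $-\delta$ into powers with exponent $-\alpha$ consistently, either in the numerator of a ratio when $x \le 1$, or after integration on $[0,x]$).
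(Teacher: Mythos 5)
Your proof is correct and follows the paper's own argument essentially verbatim: you set $\phi = F + \lambda x^{-\alpha}$, use $\delta \le \alpha$ together with $x \in (0,1]$ to dominate the $F$-contributions by the $\lambda x^{-\alpha}$ term (giving thresholds $C_1 a$, $C_2 a/\alpha$, $(2C_1+C_2)a$), and then verify the integral condition by computing $m(\phi)=\lambda/(1-\alpha)$ and bounding $\int_0^x |F|\,dt$, arriving at the threshold $C_1 a/(a-1)$. The only cosmetic difference is that you rearrange the final integral inequality to isolate $\int_0^x F\,dt$ before bounding, whereas the paper bounds $\int_0^x\psi\,dt$ directly and then compares the resulting constant to $a$; the underlying computation is identical.
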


\begin{proof}
Let 
\[
\psi(x):=F(x)+\lambda x^{-\alpha}, \quad x\in (0, 1].
\]
Observe that 
\[
\psi(x)\ge \lambda x^{-\alpha}-|F(x)| \ge \lambda x^{-\alpha}-C_1ax^{-\delta}\ge (\lambda-C_1a)x^{-\alpha},
\]
for $x\in (0, 1]$. Hence, $\psi \ge 0$ provided that $\lambda \ge C_1a$.

Moreover, we have that $\psi'(x)=F'(x)-\lambda \alpha x^{-\alpha -1}$ and consequently
\[
\psi'(x) \le |F'(x)|-\lambda \alpha x^{-\alpha -1} \le C_2ax^{-\delta-1}-\lambda \alpha x^{-\alpha -1} \le (C_2a-\lambda \alpha)x^{-\alpha -1},
\]
for $x\in (0, 1]$. Thus, $\psi$ is decreasing if $\lambda \ge \frac{C_2a}{\alpha}$.

Moreover, 
\[
(X^{\alpha+1}\psi)'(x)=(\alpha+1)x^\alpha F(x)+x^{\alpha+1}F'(x)+\lambda,
\]
and thus
\[
(X^{\alpha+1}\psi)'(x) \ge \lambda-(\alpha+1)C_1ax^{\alpha-\delta}-C_2ax^{\alpha-\delta}\ge \lambda -2C_1a-C_2a,
\]
for $x\in (0, 1]$. Therefore, $X^{\alpha+1}\psi$ is increasing provided that $\lambda \ge (2C_1+C_2)a$.

Since $m(F)=0$ we have that $m(\psi)=\frac{\lambda}{1-\alpha}$. In addition,
\[
\begin{split}
\int_0^x \psi (t)\, dt &=\int_0^xF(t)\, dt+\lambda \int_0^x t^{-\alpha}\, dt \\
&\le C_1a\int_0^x t^{-\delta}\, dt+\frac{\lambda}{1-\alpha}x^{1-\alpha} \\
&\le \frac{C_1a}{1-\delta}x^{1-\delta}+\frac{\lambda}{1-\alpha}x^{1-\alpha} \\
&\le \frac{C_1 a+\lambda}{1-\alpha}x^{1-\alpha}\\
&=\frac{C_1a+\lambda}{\lambda}x^{1-\alpha} m(\psi),
\end{split}
\]
for $x\in (0, 1]$. We conclude that $\int_0^x\psi(t)\, dt \le ax^{1-\alpha}m(\psi)$ for $x\in (0, 1]$ if 
\[
\lambda \ge \frac{C_1a}{a-1}.
\]
\end{proof}

\begin{remark}\label{181}
Observe that $\lambda x^{-\alpha}$ belongs to $\mathcal C_*(a)$ provided that  $\lambda \ge 0$.
\end{remark}

For $b_i>0$, $i=1, 2$ we introduce the  convex  cone $\C_2(b_1, b_2)$ which consists of all $\phi \in C^2(0,1]$ such that 
	\[
\phi(x)\ge 0, \quad  |\phi'(x)| \le \frac{b_1}{x}\phi(x)   \quad \text{and} \quad  |\phi''(x)|\le \frac{b_2}{x^2}\phi(x), \quad \forall x\in (0,1].
	\]
Furthermore, for an additional parameter $b_3>0$, we consider the convex cone
\[
 \C_3(b_1, b_2, b_3):=\left \{\phi \in C^3(0,1]: \ \phi \in \C_2(b_1, b_2) \quad  \text{and} \quad  
|\phi^{(3)}(x)| \le \frac{b_3}{x^3}\phi(x), \ \forall x\in (0,1] \right \}.
 \]
We have the following result which is similar  to~\cite[Proposition 2.4]{BT}.
	\begin{proposition}\label{prop:conecontraction}
 Take $\alpha \in (0,1)$. Then, there exist $b_k$, $k\in\{1, 2,3\}$ such that for $0< \beta \le \alpha$,
			\begin{equation}\label{eq:conecontractionI}
			\L_\beta(\C_j)\subset \C_j~ \text{and}~N_\beta(\C_j)\subset \C_j,
			\end{equation}
   for $j\in \{2, 3\}$, where $\C_2=\C_2(b_1, b_2)$ and $\C_3=\C_3(b_1, b_2, b_3)$.

	\end{proposition}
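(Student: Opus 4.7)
The strategy follows \cite[Proposition 2.4]{BT} and \cite{L}, extended to handle the third-derivative condition defining $\C_3$. I split the transfer operator along the two inverse branches of $T_\beta$,
\[
\L_\beta \phi(x) = N_\beta \phi(x) + H\phi(x), \qquad H\phi(x) := \tfrac 1 2 \phi\bigl(\tfrac{x+1}{2}\bigr),
\]
and observe that each cone $\C_j$ is convex under addition of nonnegative elements (by the triangle inequality on the $k$-th derivative). It therefore suffices to verify that $H$ and $N_\beta$ each map $\C_j$ into itself. The hyperbolic branch is immediate: for $\phi \in \C_j$, $|(H\phi)^{(k)}(x)| = 2^{-k-1}|\phi^{(k)}((x+1)/2)| \le b_k\, H\phi(x)/(x+1)^k < b_k\, H\phi(x)/x^k$ for $x \in (0,1]$, so $H(\C_j) \subset \C_j$ with no constraint on the $b_i$.

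For $N_\beta$, the plan is first to derive uniform estimates on the derivatives of $g_\beta$. Starting from $f_\beta(y) = y + 2^\beta y^{1+\beta}$ and the identity $f_\beta'(g_\beta(x))\, g_\beta'(x) = 1$, differentiating repeatedly, I will prove by induction that, for all $\beta \in (0,\alpha]$ and $x \in (0,1]$,
\[
g_\beta'(x) = \frac{1}{1+2^\beta(1+\beta)\, g_\beta(x)^\beta}, \qquad |g_\beta^{(k)}(x)| \le C_k\, \beta\, \frac{g_\beta(x)}{x^k}, \quad k = 2, 3, 4,
\]
combining $g_\beta(x) \in [x/3, x]$ and $g_\beta'(x) \in [1/3, 1]$ from \eqref{eq:trivialboundong} with the explicit form $f_\beta^{(k)}(y) = 2^\beta\beta\prod_{j=1}^{k-1}(1+\beta-j)\, y^{\beta+1-k}$. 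The structural identity driving the argument, already visible at order one, is
\[
\frac{g_\beta'(x)}{g_\beta(x)} = \frac{1}{x} - \frac{2^\beta \beta\, g_\beta(x)^{\beta-1}}{(1 + 2^\beta g_\beta(x)^\beta)\bigl(1+2^\beta(1+\beta)g_\beta(x)^\beta\bigr)},
\]
showing that the natural loss from $g_\beta(x) \le x$ is exactly compensated by the shrinking factor $g_\beta' \le 1$, with a positive slack of size $\sim \beta\, g_\beta^{\beta-1}$ matching the size of $|g_\beta''|/g_\beta'$.

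Finally, applying the Leibniz and Fa\`a di Bruno formulas to $\psi = N_\beta \phi = g_\beta' \cdot (\phi\circ g_\beta)$, and replacing each factor $|\phi^{(j)}(g_\beta)|$ by its cone bound $b_j\, \phi(g_\beta)/g_\beta^j$ and each $|g_\beta^{(j)}|$ by the estimate above, one obtains at each order $k = 1, 2, 3$ an inequality of the shape
\[
\frac{|\psi^{(k)}(x)|}{\psi(x)} \le \frac{b_k}{x^k} - \frac{\beta\, g_\beta(x)^{\beta}}{x^k}\bigl(c_k(\alpha)\, b_k - Q_k(\alpha; b_1,\ldots,b_{k-1})\bigr),
\]
where $c_k(\alpha) > 0$ encodes the cancellation above and $Q_k$ is an explicit polynomial in the lower-order cone constants. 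Choosing $b_1$, then $b_2$, then $b_3$ inductively large enough so that $c_k(\alpha)\, b_k \ge Q_k$ secures the cone condition and establishes $N_\beta(\C_j) \subset \C_j$; the inclusion $\L_\beta(\C_j) \subset \C_j$ then follows from the decomposition in the first paragraph. The main obstacle is the combinatorial bookkeeping at the third-derivative level, where four derivatives of $g_\beta$ and three of $\phi$ interact: one must carefully verify that the compensation coefficient $c_k(\alpha)$ remains strictly positive after collecting all terms, which relies crucially on the fact that every derivative $g_\beta^{(j)}$ with $j \ge 2$ carries a factor of $\beta$, matching the scaling of the slack provided by the identity above.
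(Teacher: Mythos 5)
Your decomposition $\L_\beta = N_\beta + H$, with the observation that $\C_j$ is closed under addition of nonnegative elements so it suffices to handle $H$ and $N_\beta$ separately, is exactly the paper's reduction (phrased there as ``it is enough to prove the second inclusion''), and your bound for $H$ is fine. The treatment of $N_\beta$ is where you depart from the paper: the paper quotes the explicit closed-form expressions for $(N_\beta\phi)^{(k)}$ from Baladi--Todd and checks factor by factor that they stay below $b_k/x^k$, whereas you propose to redo the estimate via Fa\`a di Bruno and stand-alone bounds on the derivatives of $g_\beta$.

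There is a genuine gap in that stand-alone computation. Your displayed estimate $|g_\beta^{(k)}(x)| \le C_k\,\beta\, g_\beta(x)/x^k$ is true, but it discards the factor $g_\beta(x)^\beta$ that your own formula $f_\beta^{(j)}(y)=C\beta y^{\beta+1-j}$ makes available: from $g_\beta'' = -f_\beta''(g_\beta)(g_\beta')^3$ one actually gets $|g_\beta^{(k)}(x)| \le C_k\,\beta\, g_\beta(x)^{\beta+1-k}$, which is of order $\beta\,g_\beta(x)^\beta\cdot g_\beta(x)/x^k$; your bound only uses $g_\beta^\beta\le 1$. This loss is fatal for the final step. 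The slack in the main term of $\psi^{(k)}/\psi$ coming from the identity $g_\beta'/g_\beta = 1/x - V$, with $V = \tfrac{2^\beta\beta g_\beta^{\beta-1}}{(1+2^\beta g_\beta^\beta)(1+2^\beta(1+\beta)g_\beta^\beta)}$, is of size $\asymp \beta\,g_\beta(x)^\beta/x^k$. If the lower-order contributions are bounded using your crude estimate, they are of size $\asymp \beta/x^k$ with no $g_\beta^\beta$ factor, and since $g_\beta(x)^\beta\to 0$ as $x\to 0^+$, no choice of $b_k$ makes the slack dominate. Consequently the inequality you claim to obtain, $\psi^{(k)}/\psi \le b_k/x^k - \beta g_\beta^\beta x^{-k}\bigl(c_k b_k - Q_k\bigr)$, does not follow from the estimates as stated. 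Your own remark that the ``slack $\sim \beta g_\beta^{\beta-1}$ matches the size of $|g_\beta''|/g_\beta'$'' already presupposes the sharper order $|g_\beta''|\sim\beta g_\beta^{\beta-1}$, which is inconsistent with the displayed $C_2\beta g_\beta/x^2 \sim \beta g_\beta^{-1}$. Replace the intermediate bound by $|g_\beta^{(k)}|\lesssim\beta g_\beta^{\beta+1-k}$ and the scheme you outline closes; this is precisely the structure the Baladi--Todd formulas encode and which the paper exploits directly.
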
 

\begin{remark}
In comparison to~\cite{BT}, in Proposition~\ref{prop:conecontraction} we show that the cones $\mathcal C_j$, $j\in \{2,3\}$ (defined for parameters that depend only on $\alpha$) are preserved for $\L_\beta$ and $N_\beta$ for each $0<\beta \le \alpha$. This is essentially achieved  by closely inspecting the arguments in~\cite{BT}.
\end{remark}

	\begin{proof}[Proof of Proposition~\ref{prop:conecontraction}] 
        Take $b_1\ge 1+\alpha$.
		As $(\L_\beta-N_\beta)\phi=\phi((x+1)/2)/2$, it is enough to prove the second inclusion in~\eqref{eq:conecontractionI}.
Indeed, assuming that the second inclusion in~\eqref{eq:conecontractionI} holds, we have 
\[
\begin{split}
|(\L_\beta \phi)'(x)| \le |(N_\beta \phi)'(x)|+\frac 1 4 |\phi'((x+1)/2)| &\le \frac{b_1}{x}N_\beta \phi(x)+\frac{b_1}{2(x+1)}\phi((x+1)/2) \\
&\le \frac{b_1}{x} \left (N_\beta \phi(x)+\frac 1 2 \phi((x+1)/2) \right )\\
&=\frac{b_1}{x} \L_\beta \phi(x),
\end{split}
\]
and thus $|(\L_\beta \phi)'(x)| \le \frac{b_1}{x}\L_\beta \phi(x)$ for $0<x\le 1$. Similarly, one can treat $| (\L_\beta \phi)^{(j)}(x)|$ for $j\in \{2, 3\}$.

We now turn to the second inclusion in~\eqref{eq:conecontractionI}. For $\phi\in\C_2$ and $x\in (0, 1]$, by writing $y=g_\beta(x)$, we have (see~\cite[p.869]{BT}) that 
\[
\begin{split}
|(N_\beta \phi)'(x)| \le \frac{b_1}{x}N_\beta \phi (x)\sup_{y\in [0, \frac 1 2]}\left [\frac{T_\beta(y)}{b_1}\cdot \left (\frac{T_\beta''(y)}{(T_\beta'(y))^2}+\frac{b_1}{yT_\beta'(y)}\right )\right].
\end{split}
\]
Moreover,
\begin{equation}\label{5:02}
\begin{split}
\frac{T_\beta(y)}{b_1}\cdot \left (\frac{T_\beta''(y)}{(T_\beta'(y))^2}+\frac{b_1}{yT_\beta'(y)}\right  ) &=\frac{T_\beta(y)}{yT_\beta'(y)}\left(1+\frac{yT''_\beta(y)}{b_1T'_\beta(y)}\right) \\
&=\frac{1+2^\beta y^\beta}{1+2^\beta(1+\beta)y^\beta}\left(1+\frac{2^\beta\beta(1+\beta)y^\beta}{b_1(1+2^\beta(1+\beta)y^\beta)}\right)\\
&=\left(1-\frac{2^\beta\beta y^\beta}{1+2^\beta(1+\beta)y^\beta}\right)\left(1+\frac{2^\beta\beta(1+\beta)y^\beta}{b_1(1+2^\beta(1+\beta)y^\beta)}\right)\\
&\le \left(1-\frac{2^\beta\beta y^\beta}{1+2^\beta(1+\beta)y^\beta}\right)\left(1+\frac{2^\beta\beta y^\beta}{1+2^\beta(1+\beta)y^\beta}\right)\\
&\le 1,
\end{split} 
\end{equation}
since $b_1\ge 1+\alpha \ge 1+\beta$. Consequently, $|(N_\beta \phi)'(x)| \le \frac{b_1}{x}N_\beta \phi(x)$ for $0<x\le 1$.

Next (see~\cite[p.870]{BT}),
\[
|(N_\beta \phi)''(x)| \le \frac{b_2}{x^2}N_\beta \phi(x)\frac{T_\beta(y)^2}{y^2(T_\beta'(y))^2}\left (1+\frac{2^\beta \beta y^\beta}{1+2^\beta (\beta+1)y^\beta} \frac{A}{b_2} \right ),
\]
where
\[
A:=3b_1(\beta+1)+(1-\beta^2)+3\frac{2^\beta (\beta+1)^2\beta y^\beta}{1+2^\beta (\beta+1)y^\beta}.
\]
Observe that $A\le 3b_1(\alpha+1)+1+3\beta (\beta+1)\le 3b_1(\alpha+1)+7$. Hence, if $b_2\ge 3b_1(\alpha+1)+7$ we have that 
\[
|(N_\beta \phi)''(x)| \le \frac{b_2}{x^2}N_\beta \phi(x) \frac{T_\beta(y)^2}{y^2(T_\beta'(y))^2} \left (1+\frac{2^\beta \beta y^\beta}{1+2^\beta (\beta+1)y^\beta} \right).
\]
On the other hand, 
\[
\frac{T_\beta(y)^2}{y^2(T_\beta'(y))^2}=\left (1-\frac{2^\beta \beta y^\beta}{1+2^\beta (\beta+1)y^\beta} \right )^2,
\]
which implies that $|(N_\beta \phi)''(x)| \le \frac{b_2}{x^2}N_\beta \phi(x)$ for $0<x\le 1$. We conclude $N_\beta \phi \in \mathcal C_2$, and thus $N_\beta \mathcal C_2 \subset \mathcal C_2$.  One can in a similar manner treat $(N_\beta \phi)^{(3)}$, and establish the second inclusion~\eqref{eq:conecontractionI} for $j=3$.

\end{proof}

\subsection{Existence and uniqueness of the random a.c.i.m}\label{racim}
Let us begin by introducing a class of random dynamics which is going to be studied in the present paper. 

Let $(\Omega, \mathcal F, \mathbb P)$ be a probability space equipped with an invertible $\mathbb P$-preserving measurable transformation $\sigma \colon \Omega \to \Omega$, and suppose that $\mathbb P$ is ergodic. 
Fix a measurable map $\beta \colon \Omega \to (0, 1)$ such that
\begin{equation}\label{alphac}
\alpha:=\esssup_{\omega \in \Omega} \beta(\omega)<1,
\end{equation}
and let $T_\omega$ be the LSV map with parameter $\beta(\omega)$. Finally, we assume that 
\[
\underline{\alpha}:=\essinf_{\omega \in \Omega}\beta(\omega)>0.
\]
By $\mathcal L_\omega$, we denote the transfer operator associated with $T_\omega$. For $\omega \in \Omega$ and $n\in \N$, set
\[
\mathcal L_\omega^n:=\mathcal L_{\sigma^{n-1}\omega}\circ \ldots \circ \mathcal L_\omega,
\]
and let $\mathcal L_\omega^0$ be the identity operator.
It follows from Lemma~\ref{lem2} and Proposition~\ref{prop:conecontraction} that there exist parameters $a$ and $b_j$ for $j\in \{1, 2, 3\}$ depending only on $\alpha$ such that for $\mathbb P$-a.e. $\omega \in \Omega$, $\L_\omega$ preserves cones $\mathcal C_*(a)$ and $\mathcal C_j$ for $j\in \{2,3\}$, where we continue to write $\C_2=\C_2(b_1, b_2)$ and $\C_3=\C_3(b_1, b_2, b_3)$.
 
 Let $\tau \colon \Omega \times [0, 1]\to \Omega \times [0, 1]$ be the associated skew-product transformation given by
 \[
 \tau(\omega, x)=(\sigma \omega, T_\omega(x)), \quad (\omega, x)\in \Omega \times [0, 1].
 \]
The following result follows from~\cite[Theorem 1.1]{KL}.
 \begin{theorem}\label{DEC}
 Let $\Omega'\subset \Omega$ be a $\sigma$-invariant set of full measure with the property that $\beta(\omega)\le \alpha$ for $\omega \in \Omega'$.
There exists $C_\alpha>0$ with the property that for $\omega \in \Omega'$,  $n\in \N$, and $\phi, \psi \in \C_*(a)$ such that $m(\phi)=m(\psi)$, we have that 
\begin{equation}\label{dec}
\int_0^1 | \L_\omega^n (\phi- \psi)|\, dm\le C_\alpha (\|\phi \|_{L^1(m)}+\|\psi \|_{L^1(m)})n^{-1/\alpha+1}.
\end{equation}
 \end{theorem}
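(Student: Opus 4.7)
My plan is to deduce Theorem~\ref{DEC} directly from \cite[Theorem 1.1]{KL} by verifying that its hypotheses hold uniformly along $\mathbb{P}$-almost every orbit of $\sigma$. Recall that \cite[Theorem 1.1]{KL} gives a polynomial loss of memory bound of the stated form $n^{-1/\alpha+1}$ for sequential compositions $\mathcal L_{\beta_{n-1}}\circ\cdots\circ \mathcal L_{\beta_0}$ provided the parameters $\beta_i$ lie in a compact subset of $(0,1)$ with supremum at most $\alpha$, and provided the initial densities lie in a common invariant cone of the type $\mathcal C_*(a)$. Our task is to line up the random setting with that deterministic sequential statement.

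The first step is to construct the set $\Omega'$. Since $\alpha=\esssup_{\omega}\beta(\omega)$ as in~\eqref{alphac}, the set $\{\omega:\beta(\omega)\le\alpha\}$ has full $\mathbb{P}$-measure. I would then take
\[
\Omega':=\bigcap_{k\in\mathbb Z}\sigma^{-k}\{\omega\in\Omega:\beta(\omega)\le\alpha\},
\]
which is manifestly $\sigma$-invariant and, by $\sigma$-invariance of $\mathbb{P}$ together with countable intersection, has full $\mathbb{P}$-measure. For every $\omega\in\Omega'$ and every $k\ge 0$, the map $T_{\sigma^k\omega}$ is therefore an LSV map with parameter $\beta(\sigma^k\omega)\in(\underline\alpha,\alpha]\subset(0,1)$.

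The second step is to feed this into the cited theorem. By Lemma~\ref{lem2} the single choice $a=a(\alpha)$ yields a cone $\mathcal C_*(a)$ preserved by $\mathcal L_\eta$ for every $\eta$ with $\beta(\eta)\le\alpha$; in particular $\mathcal C_*(a)$ is a common invariant cone for the whole family $\{\mathcal L_{\sigma^k\omega}\}_{k\ge 0}$ whenever $\omega\in\Omega'$. Given $\phi,\psi\in\mathcal C_*(a)$ with $m(\phi)=m(\psi)$, the difference $\phi-\psi$ is precisely the kind of mean-zero pair of cone densities to which \cite[Theorem 1.1]{KL} applies, and the cited theorem produces the bound
\[
\int_0^1\lvert\mathcal L_\omega^n(\phi-\psi)\rvert\, dm\le C_\alpha(\lVert\phi\rVert_{L^1(m)}+\lVert\psi\rVert_{L^1(m)})\, n^{-1/\alpha+1},
\]
which is exactly~\eqref{dec}.

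The only point that needs a little care is ensuring that the constant $C_\alpha$ is genuinely independent of $\omega$. This is where fixing the single upper bound $\alpha$ and the single cone $\mathcal C_*(a)$ (with $a$ depending only on $\alpha$) is essential: since the constant in \cite[Theorem 1.1]{KL} depends only on $\alpha$ and on the cone parameters, and since these are uniform over $\Omega'$, the resulting estimate is uniform in $\omega$. I do not expect any substantial obstacle here beyond this bookkeeping, because the deterministic sequential statement in \cite{KL} has been engineered precisely for this type of use.
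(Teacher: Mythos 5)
Your proposal is correct and matches the paper's approach, which is simply to cite \cite[Theorem 1.1]{KL}; you additionally spell out the construction of $\Omega'$ and the role of Lemma~\ref{lem2} in providing a cone uniformly preserved along the orbit, details the paper leaves implicit but which are accurate.
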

\begin{remark}
We note that a similar conclusion follows from~\cite[Theorem 1.6]{AHNTV}  but with the additional factor $(\log n)^{1/\alpha}$ on the right-hand side in~\eqref{dec}.
\end{remark}
The following result gives the existence and uniqueness of the random a.c.i.m.
	\begin{proposition}\label{p1}
Assume that~\eqref{alphac} holds and let $\Omega'$ be as in the statement of Theorem~\ref{DEC}.
		Then, the following holds:
        \begin{enumerate}
        \item \label{p1-1}
        there exists a unique measurable map $h\colon \Omega' \to \C_{\ast}(a)\cap\C_2$ such that 
		\[
		\mathcal L_\omega h(\omega)=h(\sigma \omega) \quad \text{and} \quad \int_0^1h(\omega)\, dm=1, \quad \text{for $\omega \in \Omega'$;}
		\]
        \item \label{p1-2}
        the measure $\mu$ on $\Omega \times [0, 1]$ given by 
        \[
        \mu(A\times B)=\int_{A\cap \Omega'} \int_Bh(\omega)\, dm\,  d\mathbb P(\omega) \quad \text{for $A\in \mathcal F$ and $B\subset [0, 1]$ Borel}
        \]
        is ergodic for $\tau$. Moreover, $\mu$ is equivalent to $\mathbb P\times m$;
        \item \label{p1-3}
        $\mu$ is the unique  invariant measure for $\tau$ which is absolutely continuous with respect to $\mathbb P\times m$.
        \end{enumerate}
	\end{proposition}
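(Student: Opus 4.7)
The plan is to construct the equivariant density $h$ as a pullback limit, read off its uniqueness and qualitative properties from cone invariance together with the decay estimate in Theorem~\ref{DEC}, and then deduce the statements about $\mu$ from ergodicity of $\sigma$ combined with positivity of $h$.

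For part~\eqref{p1-1}, I would set $\phi_n(\omega):=\L_{\sigma^{-n}\omega}^n\mathbf 1$ for $\omega\in\Omega'$ and $n\in\N$. Since $\mathbf 1\in\C_*(a)\cap\C_2\cap\C_3$ (the defining inequalities hold trivially for $a>1$ and the constants from Proposition~\ref{prop:conecontraction}), Lemma~\ref{lem2} and Proposition~\ref{prop:conecontraction} give $\phi_n(\omega)\in\C_*(a)\cap\C_2\cap\C_3$ for every $n$, and \eqref{intpres} yields $\int\phi_n(\omega)\,dm=1$. Applying Theorem~\ref{DEC} to the pair $\bigl(\L_{\sigma^{-m}\omega}^{m-n}\mathbf 1,\mathbf 1\bigr)\subset\C_*(a)$ gives, for $m>n$,
\[
\|\phi_m(\omega)-\phi_n(\omega)\|_{L^1(m)}\le 2C_\alpha n^{1-1/\alpha},
\]
so $(\phi_n(\omega))_n$ is Cauchy and $h(\omega):=\lim_n\phi_n(\omega)$ exists in $L^1(m)$. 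The pointwise bounds on $\phi_n(\omega)$ and its first two derivatives, furnished by $\C_*(a)\cap\C_3$ and Remark~\ref{n06}, yield uniform local $C^2$-bounds on $(0,1]$; Arzelà--Ascoli extraction together with the $L^1$-convergence then identifies the limit and transfers all defining inequalities of $\C_*(a)\cap\C_2$ to $h(\omega)$. Equivariance $\L_\omega h(\omega)=h(\sigma\omega)$ follows from $L^1$-continuity of $\L_\omega$ and the identity $\L_\omega\phi_n(\omega)=\phi_{n+1}(\sigma\omega)$, while measurability of $h$ is inherited from that of the $\phi_n$. Uniqueness within $\C_*(a)\cap\C_2$ is immediate: for any two such densities $h_1,h_2$, one has $h_i(\omega)=\L_{\sigma^{-n}\omega}^n h_i(\sigma^{-n}\omega)$, and Theorem~\ref{DEC} gives $\|h_1(\omega)-h_2(\omega)\|_{L^1(m)}\le 2C_\alpha n^{1-1/\alpha}\to 0$.

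For part~\eqref{p1-2}, $\tau$-invariance of $\mu$ is a direct computation combining equivariance of $h$ with $\sigma$-invariance of $\mathbb P$ and $\Omega'$. Equivalence with $\mathbb P\times m$ reduces to $h(\omega)>0$ on $(0,1]$: if $h(\omega)(x_0)=0$ for some $x_0\in(0,1]$, monotonicity forces $h(\omega)\equiv 0$ on $[x_0,1]$, while the increase of $X^{\alpha+1}h(\omega)$ combined with non-negativity forces $h(\omega)\equiv 0$ on $(0,x_0]$, contradicting $m(h(\omega))=1$. For ergodicity, given a $\tau$-invariant set $A$ with $A_\omega:=\{x:(\omega,x)\in A\}$, one has $T_\omega^{-1}A_{\sigma\omega}=A_\omega$ a.s., so $\rho(\omega):=\mathbf 1_{A_\omega}h(\omega)$ satisfies $\L_\omega\rho(\omega)=\rho(\sigma\omega)$ and $\omega\mapsto\int\rho(\omega)\,dm$ is $\sigma$-invariant, hence equal to a constant $c\in[0,1]$. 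If $c\in(0,1)$, then $\rho/c$ and $(h-\rho)/(1-c)$ are two distinct non-negative equivariant densities of unit integral; the aim is to derive a contradiction by approximating each in $L^1$ by elements of $\C_*(a)$ with matching integral and invoking~\eqref{dec}, forcing both to equal $h$. Part~\eqref{p1-3} then follows standardly: any $\tau$-invariant $\mu'\ll\mathbb P\times m$ is also $\ll\mu$, the Radon--Nikodym derivative $d\mu'/d\mu$ is $\tau$-invariant and bounded, hence $\mu$-a.e.\ constant by ergodicity, and normalization gives $\mu'=\mu$.

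The main technical obstacle is the ergodicity step, since the indicator-weighted density $\rho$ that arises from a $\tau$-invariant set need not lie in $\C_*(a)$, whereas Theorem~\ref{DEC} is stated only for that cone. The delicate point is therefore to set up a suitable $L^1$-approximation of $\rho$ by cone densities with matching mass, or alternatively to upgrade the decay estimate to a broader class of non-negative $L^1$ densities by exploiting the smoothing effect of iterates $\L_\omega^n$ acting on $\mathbf 1$; the remaining pieces of the argument are routine cone-contraction bookkeeping.
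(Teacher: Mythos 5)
Parts~\eqref{p1-1} and~\eqref{p1-3} of your proposal match the paper's argument: pull-back iterates $\L_{\sigma^{-n}\omega}^n\mathbf 1$, Cauchy in $L^1$ by Theorem~\ref{DEC}, local $C^2$-bounds from cone membership plus Arzel\`a--Ascoli to identify the limit and its cone properties, uniqueness by pull-back, and the standard Radon--Nikodym argument for the last part. Your positivity argument for $h(\omega)>0$ (via monotonicity of $h(\omega)$ and of $X^{\alpha+1}h(\omega)$) is a valid and slightly more elementary alternative to the paper's use of Lemma~\ref{TL}; it gives pointwise positivity rather than the uniform lower bound $h(\omega)\ge c$, but that is all that is needed for the equivalence $\mu\sim\mathbb P\times m$.

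The genuine gap, which you yourself flag, is the ergodicity step, and it is not merely ``routine bookkeeping.'' Your plan is to set $\rho(\omega)=\mathbf 1_{A_\omega}h(\omega)$, note the equivariance, and then approximate $\rho(\omega)/c$ in $L^1$ by densities in $\C_*(a)$ so as to invoke~\eqref{dec}. That approximation cannot work in general: $\C_*(a)$ consists of \emph{decreasing} functions, so it is far from dense in $L^1$, while $\mathbf 1_{A_\omega}h(\omega)$ will typically be highly non-monotone. Nor does the alternative you mention (``upgrade the decay estimate by exploiting smoothing of $\L_\omega^n\mathbf 1$'') come for free; the decay in Theorem~\ref{DEC} is stated for differences of cone elements, and $\rho(\sigma^{-n}\omega)$ is not of that form. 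The paper avoids this obstruction entirely by working on the observable side instead of the density side: it first proves (citing~\cite{nicol2018central}) a decay bound of the form~\eqref{1116} for $\L_\omega^n(\varphi\psi)$ where $\varphi\in C^1$ is arbitrary (not in the cone) and $\psi\in\C_*(a)$, with $\int\varphi\psi\,dm=0$. This yields the decorrelation estimate~\eqref{decc} for $C^1$ observables against $L^\infty$ functions with respect to the equivariant measures $\mu_\omega$. Given a $\tau$-invariant set $\mathcal S$, setting $\Omega_0=\{\omega:\mu_\omega(\mathcal S_\omega)>0\}$ gives a $\sigma$-invariant set, hence $\mathbb P(\Omega_0)\in\{0,1\}$; when $\mathbb P(\Omega_0)=1$, applying~\eqref{decc} with $\psi_\omega=\mathbf 1_{\mathcal S_{\sigma^n\omega}}$ and letting $n\to\infty$ forces $\int_{\mathcal S_\omega}\varphi\,d\mu_\omega=0$ for all mean-zero $\varphi\in C^1$, and then density of $C^0$ (hence of $C^1$ in sup-norm) in $L^1(\mu_\omega)$ upgrades this to $\mu_\omega(\mathcal S_\omega)=1$. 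You would need some version of this decorrelation estimate for a function class dense in $L^1(\mu_\omega)$ — or another idea entirely — to close the argument; the cone-density approximation route does not succeed.
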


    \begin{remark}
We observe that we can take $\Omega'=\bigcap_{n\in \Z}\sigma^{-n}(\bar \Omega)$, where $\bar \Omega\subset \Omega$ is a full measure set such that $\beta(\omega)\le \alpha$ for $\omega \in \bar \Omega$. The role of $\Omega'$ will be clarified in Remark~\ref{lrremark}.
    \end{remark}
	
	\begin{proof}[Proof of Proposition~\ref{p1}]
  
 Let us show claim \ref{p1-1} first.  
 We take $\omega \in \Omega'$, the constant function $1\in \mathcal C_*(a)\cap \mathcal C_3$ and consider the sequence of functions $(\psi_n^\omega)_{n\in \N}$, where $\psi_n^\omega:=\mathcal L_{\sigma^{-n}\omega}^n 1$. Observe that it follows from~\eqref{dec} that for $m, n\in \N$ with $m>n$ we have
 \[
 \|\psi_m^\omega-\psi_n^\omega\|_{L^1(m)}=\|\mathcal L_{\sigma^{-n}\omega}^n(\mathcal L_{\sigma^{-m}\omega}^{m-n}1-1)\|_{L^1(m)}\le 2C_\alpha n^{-1/\alpha+1},
 \]
 since  $\mathcal L_{\sigma^{-m}\omega}^{m-n}1\in \C_\ast(a)$ and
 $\|\mathcal L_{\sigma^{-m}\omega}^{m-n}1\|_{L^1(m)}=1$.
 Thus, the sequence $(\psi_n^\omega)_{n\in \N}$ is a Cauchy sequence (and consequently a convergent one) in $L^1(m)$. 
Set
\begin{equation}\label{homega}
h(\omega):=\lim_{n\to \infty}\psi_n^\omega=\lim_{n\to \infty}\mathcal L_{\sigma^{-n}\omega}^n (1)\in L^1(m).
\end{equation}
Clearly, $\int_0^1 h(\omega)=1$, and, moreover,
\[
h(\sigma \omega)=\lim_{n\to \infty}\mathcal L_{\sigma^{-(n-1)}\omega}^n 1=\mathcal L_\omega (\lim_{n\to \infty}\mathcal L_{\sigma^{-(n-1)}\omega}^{n-1}(1))=\mathcal L_\omega h(\omega).
\]

Next, we show that $h(\omega)\in \C_\ast(a) \cap \C_2$. To this end, we start by noting (see Remark~\ref{n06}) that since $\psi_n^\omega \in \C_\ast(a)$, we have
 \[
 0\le x^{\alpha+1}\psi_n^\omega(x)\le ax \le a, \quad \forall x\in (0, 1].
 \]
 Moreover, since $\psi_n^\omega\in \mathcal C_2$ we have that 
 \[
 \begin{split}
|(X^{\alpha+1}\psi_n^\omega)'(x)| &\le (\alpha+1)x^\alpha \psi_n^\omega(x)+
x^{\alpha+1}|(\psi_n^\omega)'(x)| \\
&\le a(\alpha+1)+b_1x^\alpha \psi_n^\omega(x) \\
&\le a(\alpha+1)+ab_1,
 \end{split}
 \]
 for all $x\in (0, 1]$. 
 Hence,  the sequence $(X^{\alpha+1}\psi_n^\omega)_{n\in \N}$ consists of equibounded and equicontinuous functions. 
 
 On the other hand, we also observe that 
  \[
  x^{\alpha+2}|(\psi_n^\omega)'(x)| \le x^{\alpha+1}|(\psi_n^\omega)'(x)|\le ab_1, \quad \forall x\in (0,1].
  \]
  Moreover,
  \[
  \begin{split}
  |(X^{\alpha+2}(\psi_n^\omega)')'(x)|&\le (\alpha+2)x^{\alpha+1}|(\psi_n^\omega)'(x)|+x^{\alpha+2}|(\psi_n^\omega)''(x)| \\
  &\le b_1(\alpha+2)x^\alpha \psi_n^\omega(x)+b_2x^\alpha \psi_n^\omega(x)\\
  &\le ab_1(\alpha+2)+ab_2,
  \end{split}
  \]
  for $x\in (0, 1]$. Hence, $(X^{\alpha+2}(\psi_n^\omega)')_{n\in \N}$ is an equibounded and equicontinuous sequence of functions.

  Similarly, 
  \[
 x^{\alpha+3}|(\psi_n^\omega)''(x)|\le x^{\alpha+2} |(\psi_n^\omega)''(x)| \le ab_2, \quad \forall x\in (0, 1].
  \]
In addition, since $\psi_n^\omega \in \mathcal C_3$ we have that 
\[
\begin{split}
|(X^{\alpha+3}(\psi_n^\omega)'')'(x)|&\le (\alpha+3)x^{\alpha+2}|(\psi_n^\omega)''(x)|+x^{\alpha+3}|(\psi_n^\omega)^{(3)}(x)|\\
&\le b_2(\alpha+3)x^\alpha \psi_n^\omega(x)+b_3x^\alpha \psi_n^\omega(x)\\
&\le ab_2(\alpha+3)+ab_3,
\end{split}
\]
for $x\in (0, 1]$. Therefore, $(X^{\alpha+3}(\psi_n^\omega)'')_{n\in \N}$ is also an equibounded and equicontinuous sequence of functions.

 By the Arzela-Ascoli theorem, 
 we can find a subsequence $(n_l)_l$ of $\N$ such that the sequences $(X^{\alpha+1}\psi_{n_l}^{\omega})_l$, $(X^{\alpha+2}(\psi_{n_l}^{\omega})')_l$ and $(X^{\alpha+3}(\psi_{n_l}^{\omega})'')_l$  converge uniformly on every compact subinterval of $(0,1]$, and thus also pointwise, to a continuous function.
 Let $\tilde h(\omega)$ denote the limit of $(X^{\alpha+1}\psi_{n_l}^{\omega})_l$.
 Then, $\psi_{n_l}^{\omega}\to X^{-\alpha-1}\tilde h( \omega)$ when $l\to \infty$ uniformly on every compact subinterval of $(0, 1]$, and in particular pointwise. Since $0\le \psi_{n_l}^{\omega}(x) \leq a x^{-\alpha}$, by the dominated convergence theorem, $\psi_{n_l}^{\omega}\to X^{-\alpha-1} \tilde h( \omega)$ as $l\to \infty$ in $L^1(m)$.  Consequently, we have (see~\eqref{homega}) that $h(\omega)=X^{-\alpha-1}\tilde h(\omega)$.
 From here it is straightforward to verify (using the fact that $\C_\ast(a)$ is closed) that $h(\omega)\in \mathcal C_*(a)$. In addition, the convergence of $(X^{\alpha+2}(\psi_{n_l}^\omega)')_l$ and $(X^{\alpha+3}(\psi_{n_l}^\omega)'')_l$ implies that the sequences $((\psi_{n_l}^k)')_l$ and $((\psi_{n_l}^k)'')_l$ converge uniformly on each compact subinterval of $(0, 1]$. We conclude that $h(\omega)$
 is of class $C^2$ 
 and that $(\psi_{n_l}^\omega)'\to h(\omega)'$ and $(\psi_{n_l}^\omega)'' \to h(\omega)''$ pointwise as $l\to \infty$. Now we can easily show that $h(\omega)\in   \mathcal C_2$, using the closedness of $\C_2$.

 Next, we discuss the measurability of the map $\omega \mapsto h(\omega)$. The arguments of \cite[Section 3.3]{GTQuas}, which remain applicable in the setting of intermittent maps, show that the map 
    $\omega \mapsto \mathcal L_\omega$ is strongly measurable,    
    when considered as a function from $\Omega'$ to the space of all bounded linear operators on a fractional Sobolev space $\mathcal H_p^t$, such that $0<t<\min \{\underline{\alpha}, 1/p \}<1$. Since the embedding $\mathcal H_p^t \hookrightarrow L^1$ is continuous, we conclude that 
    $h:\Omega'\to L^1$ is measurable, as it is the limit of measurable functions.

    Finally, we discuss the uniqueness. Suppose that $\bar h\colon \Omega' \to \C_\ast(a)\cap \C_2$ is another measurable map such that 
    \[
    \mathcal L_\omega \bar h(\omega)=\bar h(\sigma \omega) \quad \text{and} \quad \int_0^1 \bar h(\omega)\, dm=1, \quad \text{for  $\omega \in \Omega'$.}
    \]
 Then, using~\eqref{dec} one has 
 \[
 \|h(\omega)-\bar h(\omega)\|_{L^1(m)}=\|\L_{\sigma^{-n}\omega}^n(h(\sigma^{-n}\omega)-\bar h(\sigma^{-n}\omega))\|_{L^1(m)}\le 2C_\alpha n^{-1/\alpha+1},
 \]
	for $\omega \in \Omega'$ and $n\in \N$. Letting $n\to \infty$, we conclude that $h(\omega)=\bar h(\omega)$ for $\omega \in \Omega'$. This completes the proof of the first assertion.

 Next, let us show claim \ref{p1-2}. Clearly, $\mu$ is absolutely continuous with respect to $\mathbb P\times m$. Let $c>0$ and $N\in \N$ be given by Lemma~\ref{TL}. Then, 
\[
h(\omega)=\mathcal L_{\sigma^{-N}\omega}^N h(\sigma^{-N}\omega)\ge c,
\]
for  $\omega \in \Omega'$ as $h(\sigma^{-N}\omega)\in \C_\ast(a)$ and $\int_0^1 h(\sigma^{-N}\omega)\, dm=1$. This implies that $\mathbb P\times m$ is absolutely continuous with respect to $\mu$.

We now aim to show that $\mu$ is ergodic for $\tau$. Our arguments will follow closely those in the proof of~\cite[Proposition 7]{DH}.
By arguing as in the proof of~\cite[Lemma 3.4.]{nicol2018central} we have that there exists $D_\alpha>0$ such that 
\begin{equation}\label{1116}
    \|\mathcal L_\omega^n(\varphi \psi)\|_{L^1(m)}\le D_\alpha n^{-1/\alpha+1}\|\varphi\|_{C^1}\cdot \|\psi\|_{L^1(m)},
\end{equation}
for $\omega \in \Omega'$, $\varphi \in C^1 [0, 1]$ and $\psi \in \C_\ast(a)$ with $\int_0^1 \varphi \psi\, dm=0$. For $\omega \in \Omega'$, set 
\[
L_\omega (\varphi)=\frac{\mathcal L_\omega (\varphi h(\omega))}{h(\sigma \omega)}, \quad \varphi \in L^1(m).
\] 
Moreover, let
\[
L_\omega^n:=L_{\sigma^{n-1}\omega}\circ \ldots \circ L_\omega.
\]
It follows from~\eqref{1116} that 
\[
\|L_\omega^n \varphi\|_{L^1(\mu_{\sigma^n\omega})}\le C_\alpha n^{-1/\alpha+1}\|\varphi\|_{C^1} \quad \text{for $\omega\in \Omega'$ and $\varphi\in C^1[0, 1]$ with $\int_0^1 \varphi\, d\mu_\omega=0$,}
\]
where $d\mu_\omega=h(\omega)\, dm$.

Next, we claim that for  $\omega \in \Omega'$, $\varphi \in C^1[0, 1]$ and $\psi \in L^\infty(\mu_{\sigma^n \omega})$,
\begin{equation}\label{decc}
\left |\int_0^1 \varphi (\psi \circ T_\omega^n)\, d\mu_\omega-\int_0^1 \varphi\, d\mu_\omega \int_0^1 \psi\, d\mu_{\sigma^n \omega}\right | \le D_\alpha n^{-1/\alpha+1}\|\varphi\|_{C^1}\cdot \|\psi\|_{L^\infty(\mu_{\sigma^n \omega)}}.
\end{equation}
Indeed, we have \[
\begin{split}
&\left |\int_0^1 \varphi (\psi \circ T_\omega^n)\, d\mu_\omega-\int_0^1 \varphi\, d\mu_\omega \int_0^1 \psi\, d\mu_{\sigma^n \omega}\right |\\
&=\left |\int_0^1 L_\omega^n(\varphi)\psi\, d\mu_{\sigma^n\omega}-\int_0^1 \varphi\, d\mu_\omega \int_0^1 \psi\, d\mu_{\sigma^n \omega}\right |\\
&=\left |\int_0^1 L_\omega^n \left(\varphi-\int_0^1 \varphi\, d\mu_\omega \right)\psi\, d\mu_{\sigma^n\omega} \right | \\
&\le \left \|L_\omega^n \left(\varphi-\int_0^1 \varphi\, d\mu_\omega \right)\right \|_{L^1(\mu_{\sigma^n \omega)}}\cdot \|\psi\|_{L^\infty(\mu_{\sigma^n \omega)}}\\
&\le D_\alpha n^{-1/\alpha+1}\left \|\varphi-\int_0^1 \varphi\, d\mu_\omega\right \|_{C^1}\cdot \|\psi\|_{L^\infty(\mu_{\sigma^n \omega)}}\\
&\le 2D_\alpha n^{-1/\alpha+1}\|\varphi\|_{C^1}\cdot \|\psi\|_{L^\infty(\mu_{\sigma^n \omega)}},
\end{split}
\]
yielding~\eqref{decc}.

Now take measurable $\mathcal S\subset \Omega \times [0, 1]$ such that $\tau^{-1}(\mathcal S)=\mathcal S$. We need to show that $\mu (\mathcal S)\in \{0, 1\}$. For $\omega \in \Omega$, let
\[
\mathcal S_\omega:=\{x\in [0, 1]: (\omega, x)\in \mathcal S\}.
\]
Observe that 
\[
x\in T_\omega^{-1}(\mathcal S_{\sigma \omega}) \iff T_\omega (x)\in \mathcal S_{\sigma \omega} \iff (\sigma \omega, T_\omega(x))\in \mathcal S \iff \tau(\omega, x)\in \mathcal S \iff (\omega, x)\in \mathcal S,
\]
which implies that 
\begin{equation}\label{eee}
    T_\omega^{-1}(\mathcal S_{\sigma \omega})=\mathcal S_\omega, \quad \omega \in \Omega.
\end{equation}
Set
\[
\Omega_0:=\{ \omega \in \Omega': \ \mu_\omega (\mathcal S_\omega)>0\}\in \mathcal F.
\]
By~\eqref{eee} we have $\sigma(\Omega_0)=\Omega_0$. Since $\sigma$ is ergodic, we conclude that $\mathbb P(\Omega_0)\in \{0, 1\}$.  If $\mathbb P(\Omega_0)=0$, then clearly $\mu(\mathcal S)=0$.  From now on we  suppose that $\mathbb P(\Omega_0)=1$.  
We now claim that 
\begin{equation}\label{CLAIM}
\int_{\mathcal S_\omega}\varphi d\mu_\omega=0, \quad \text{for $\omega \in \Omega_0$ and $\varphi \in C^1[0, 1]$ such that $\int_0^1 \varphi \, d\mu_\omega=0$.}
\end{equation}
Indeed, \eqref{CLAIM} follows immediately from~\eqref{decc} applied to $\psi_\omega=\mathbf 1_{\mathcal S_{\sigma^n \omega}}$ for $n\in \mathbb N$. Since every $C^1$ function can be approximated by a continuous one (in supremum norm), we find that~\eqref{CLAIM} holds with $\varphi \in C^0[0, 1]$.  Finally, $C^0[0, 1]$ is dense in $L^1(\mu_\omega)$, which yields that~\eqref{CLAIM} holds for $\varphi \in L^1(\mu_\omega)$. Thus, $\mu_\omega(\mathcal S_\omega)=1$ for $\omega \in \Omega_0$, and consequently $\mu(\mathcal S)=1$.  We conclude that $\mu$ is ergodic.

Finally,  claim \ref{p1-3}
follows directly from claim \ref{p1-2}. 
	\end{proof}

    \begin{remark}
In the sequel, we will identify $\mu$ with the map $h\colon \Omega' \to \C_\ast(a)\cap \C_2$ or its arbitrary extension to a measurable map $\Omega \to \C_\ast(a)\cap \C_2$. Moreover, instead of $h$ we will write $(h(\omega))_{\omega \in \Omega}$.
\end{remark}

\section{Quenched statistical stability}
\label{S:SeqStatStab}

Before formulating a random statistical stability result, Theorem~\ref{SS}, we establish an auxiliary result.
\begin{lemma}\label{estim}
For $0<\gamma_0 \le \alpha<1$, there exist $0<\delta \le \alpha$  and $C_1, C_2>0$ such that 
\begin{equation}\label{lemm}
|(X_{\gamma}  N_{\gamma}(\phi))'(x)| \le C_1ax^{-\delta} \quad \text{and} \quad 
|(X_{\gamma}  N_{\gamma}(\phi))''(x)| \le C_2a x^{-\delta -1},
\end{equation}
for $x\in (0, 1]$, $\gamma \in [\gamma_0, \alpha]$ and $\phi \in \mathcal C_*(a)\cap \mathcal C_2(b_1, b_2)$ with $m(\phi)=1$, where $a$ and $b_i$, $i=1, 2$ are given by Lemma~\ref{lem2} and Proposition~\ref{prop:conecontraction} respectively. Moreover, there exist $\psi_i \in \mathcal C_*(a)\cap C^1(0, 1]$, $i\in \{1, 2\}$ such that 
\begin{equation}\label{psi12}
(X_\gamma N_\gamma (\phi))'=\psi_1-\psi_2,
\end{equation}
with $\int_0^1 \psi_1 \, dm = \int_0^1 \psi_2\, dm$
and $\| \psi_i\|_{L^1(m)}\le D$, $i \in \{1, 2\}$ for some $D= D_{\gamma_0, \alpha}>0$.
\end{lemma}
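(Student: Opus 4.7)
My plan is to reduce \eqref{lemm} to pointwise bounds obtained by Leibniz's rule from separate estimates on $X_\gamma, X_\gamma', X_\gamma''$ and on $N_\gamma\phi$ and its first two derivatives, and then to obtain the decomposition \eqref{psi12} by a direct application of Lemma \ref{insidecone} to $F:=(X_\gamma N_\gamma(\phi))'$.

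First, using the expression $X_\gamma(x) = 2^\gamma g_\gamma(x)^{1+\gamma}\log(2g_\gamma(x))$ from \eqref{def:mapperturb}, the two-sided estimate $x/3 \le g_\gamma(x) \le x$ of \eqref{eq:trivialboundong}, and the inverse-function identity $g_\gamma''(x) = -f_\gamma''(g_\gamma(x))/(f_\gamma'(g_\gamma(x)))^3$ (together with its third-order analogue) applied to $f_\gamma'(x) = 1 + 2^\gamma(1+\gamma)x^\gamma$, I would derive
\[
|X_\gamma^{(j)}(x)| \le C_\alpha\, x^{1+\gamma - j}\bigl(1+|\log x|\bigr),\quad j\in \{0,1,2\},\quad x\in (0,1].
\]
The assumptions $\phi \in \mathcal{C}_*(a)\cap \mathcal{C}_2(b_1, b_2)$ and $m(\phi)=1$ give, via Remark \ref{n06} and the definition of $\mathcal{C}_2(b_1,b_2)$, the pointwise bounds $\phi(y)\le a y^{-\alpha}$, $|\phi'(y)|\le b_1 a y^{-\alpha-1}$ and $|\phi''(y)|\le b_2 a y^{-\alpha-2}$; inserting these into $N_\gamma\phi(x) = g_\gamma'(x)\phi(g_\gamma(x))$ and differentiating, the highest-order term dominates in each case, yielding
\[
|(N_\gamma\phi)^{(k)}(x)| \le C_\alpha\, a\, x^{-\alpha - k},\quad k\in \{0,1,2\},\quad x\in (0,1].
\]

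Combining these via Leibniz's rule, each summand of $(X_\gamma N_\gamma(\phi))^{(j)}$ is controlled by $C_\alpha\, a\, x^{\gamma-\alpha-(j-1)}\bigl(1+|\log x|\bigr)$ for $j\in \{1,2\}$. Fixing any $\epsilon \in (0,\gamma_0)$, the elementary bound $|\log x|\le C_\epsilon x^{-\epsilon}$ on $(0,1]$, together with $\gamma\ge \gamma_0$, allows me to set $\delta := \alpha - \gamma_0 + \epsilon \in (0,\alpha)$ and conclude
\[
x^{\gamma-\alpha-(j-1)}\bigl(1+|\log x|\bigr) \le C_{\gamma_0,\alpha}\, x^{-\delta-(j-1)},\quad j\in \{1,2\},
\]
which yields \eqref{lemm}. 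I expect this step to be the main obstacle: handling the interplay between $x^{\gamma}$ and $x^{-\alpha}$ uniformly in $\gamma \in [\gamma_0, \alpha]$, while simultaneously absorbing the logarithmic factor, is precisely what forces $\delta$ to be strictly less than $\alpha$ and makes the positivity assumption $\gamma_0 > 0$ essential.

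For the decomposition \eqref{psi12}, I would first verify $m(F)=0$ via the identity $\int_0^1 F\, dm = [X_\gamma N_\gamma(\phi)]_0^1$: at $x=1$ this vanishes because $g_\gamma(1)=1/2$ forces $X_\gamma(1)=0$; at $x\to 0^+$ the estimates above give $X_\gamma(x)N_\gamma(\phi)(x) = O\bigl(x^{1+\gamma-\alpha}|\log x|\bigr) \to 0$, using $1+\gamma>\alpha$. With $m(F)=0$ and the bounds $|F(x)|\le C_1 a x^{-\delta}$, $|F'(x)|\le C_2 a x^{-\delta-1}$ from \eqref{lemm}, Lemma \ref{insidecone} applies (noting that its proof uses only $F$ and $F'$, so $F\in C^1$ is enough in our setting) and yields $\psi_1 := F + \lambda X^{-\alpha}\in \mathcal{C}_*(a)$ for a sufficiently large $\lambda = \lambda(\gamma_0,\alpha)$. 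By Remark \ref{181}, $\psi_2 := \lambda X^{-\alpha}\in \mathcal{C}_*(a)$ as well; both are $C^1$ on $(0,1]$, satisfy $\psi_1 - \psi_2 = F$, and $\int_0^1 \psi_1\, dm = \int_0^1\psi_2\, dm = \lambda/(1-\alpha) =: D_{\gamma_0,\alpha}$, completing the proof.
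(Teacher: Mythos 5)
Your proposal is correct and follows essentially the same route as the paper's proof. The only cosmetic differences are that you propose to derive the bounds $|X_\gamma^{(j)}(x)| \le C_\alpha x^{1+\gamma-j}(1+|\log x|)$ directly from the inverse-function derivative formulas, whereas the paper cites \cite[(2.3)--(2.5)]{BT} for these estimates, and that you bound $(N_\gamma\phi)^{(k)}$ by differentiating $g_\gamma'\cdot\phi\circ g_\gamma$ by hand, whereas the paper invokes the cone-invariance $N_\gamma(\C_2)\subset\C_2$ from Proposition~\ref{prop:conecontraction} to get $|(N_\gamma\phi)^{(k)}(x)|\le b_k x^{-k}N_\gamma\phi(x)\le 3ab_kx^{-\alpha-k}$ more quickly; both paths yield identical power-times-log bounds. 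Your choice $\delta=\alpha-\gamma_0+\epsilon$ coincides with the paper's $\delta=\alpha+r-\gamma_0$, and the decomposition step (verify $m(F)=0$ by integration by parts, take $\psi_1=F+\lambda X^{-\alpha}$, $\psi_2=\lambda X^{-\alpha}$ via Lemma~\ref{insidecone} and Remark~\ref{181}, then bound $m(\psi_i)$) is word-for-word the paper's argument. Your side observation that Lemma~\ref{insidecone} really only needs $F\in C^1$ is accurate and in fact addresses a small imprecision in the paper, which states that lemma for $F\in C^2$ but applies it to $(X_\gamma N_\gamma\phi)'$, which is only $C^1$ when $\phi\in\C_2$.
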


\begin{proof}
Since $\phi \in \mathcal C_*(a)$ and $m(\phi)=1$, we have (see~Remark~\ref{n06}) that
$0\le \phi(x)\le ax^{-\alpha}$  for $x\in (0, 1]$.  By \eqref{eq:trivialboundong}, one gets that 
\begin{equation}\label{i1}
0\le N_\gamma (\phi)(x)\le ag_\gamma(x)^{-\alpha}\le 3ax^{-\alpha}, \quad \forall x\in (0, 1].
\end{equation}
On the other hand, since $\phi \in\C_2(b_1, b_2)$, we have that $N_\gamma (\phi)\in\C_2(b_1, b_2)$, and therefore
\begin{equation}\label{i2}
|(N_\gamma(\phi))'(x)|\le \frac{b_1}{x}N_\gamma(\phi)(x)\le 3ab_1x^{-1-\alpha}, \quad \forall x\in (0, 1].
\end{equation}
Hence, using \cite[(2.3)-(2.4)]{BT}, we get that 
\begin{equation}\label{tuz}
|(X_{\gamma}  N_{\gamma}(\phi))'(x)| \le C_1ax^{\gamma-\alpha}(1-\log x) \quad x\in (0, 1],
\end{equation}
for some constant $C_1$ independent on $x$, $\phi$ and $\gamma$.
Moreover, we have that 
\[
|(N_\gamma(\phi))''(x)|\le \frac{b_2}{x^2}N_\gamma(\phi)(x)\le 3ab_2x^{-2-\alpha}, \quad \forall x\in (0, 1].
\]
Using~\cite[(2.3)-(2.5)]{BT}, we obtain that 
\begin{equation}\label{i3}
|(X_{\gamma}  N_{\gamma}(\phi))''(x)| \le C_2ax^{\gamma-\alpha-1}(1-\log x) \quad x\in (0, 1],
\end{equation}
for some constant $C_2$ independent on $x$, $\phi$ and $\gamma$. Choose now $r>0$ such that $\delta:=\alpha+r-\gamma_0 \le \alpha$. Since $\lim\limits_{x\to 0^+}\frac{1-\log x}{x^{-r}}=0$,
it follows from~\eqref{tuz} that  there exists $\bar C_1>0$ (independent on $x$, $\phi$ and $\gamma$) such that 
\[
|(X_{\gamma}  N_{\gamma}(\phi))'(x)| \le \bar  C_1a x^{\gamma-\alpha-r}\le \bar C_1ax^{\gamma_0-\alpha-r}= \bar C_1ax^{-\delta} \quad x\in (0, 1],
\]
which yields the first estimate in~\eqref{lemm}. Similarly, one can establish the second estimate in~\eqref{lemm}. 

In order to establish the second assertion of the lemma, we first note that  $m((X_\gamma N_\gamma (\phi))')=0$, using integration by parts.  It follows from~\eqref{lemm} and Lemma~\ref{insidecone} that  there exists $\lambda>0$ depending only on $\alpha$ such that 
$(X_\gamma N_\gamma (\phi))'+\lambda x^{-\alpha} \in \mathcal C_\ast(a)$. Set
\[
\psi_1:=(X_\gamma N_\gamma (\phi))'+\lambda x^{-\alpha} \quad \text{and} \quad \psi_2:=\lambda x^{-\alpha}.
\]
Clearly, \eqref{psi12} holds and $\psi_i \in \mathcal C_\ast(a)\cap C^1(0, 1]$ for $i=1, 2$ (see Remark~\ref{181}). Finally, using the first estimate in~\eqref{lemm} we have that 
\[
m(\psi_1)\le \frac{C_1a+\lambda}{1-\alpha} \quad \text{and} \quad m(\psi_2)\le \frac{\lambda}{1-\alpha}.
\]
Hence, we can take $D:=\frac{C_1a+\lambda}{1-\alpha}>0$.
\end{proof}

Let $\beta_i \colon \Omega \to (0, 1)$, $i=1, 2$   be measurable maps such that
 \[
0< \essinf_{\omega \in \Omega} \beta_i(\omega) \quad \text{and} \quad \esssup_{\omega \in \Omega} \beta_i(\omega)<\frac 1 2,
 \]
 for $i=1, 2$. 
By $(h_i(\omega))_{\omega \in \Omega}$, $i=1, 2$ 
we will denote the random a.c.i.m corresponding to the cocycle $(T_{\beta_i(\omega)})_{\omega \in \Omega}$, 
identified in Proposition~\ref{p1}.
\begin{theorem}\label{SS} 
There exists a constant $C_\alpha>0$ such that for each $\psi \in L^\infty(m)$ and $\mathbb P$-a.e. $\omega \in \Omega$,
\begin{equation}\label{eq:Lipbound}
	\left|\int_0^1\psi\left(h_1(\omega)-h_2(\omega)\right)\, dm\right|\le C_\alpha\epsilon \|\psi\|_{\infty},
	\end{equation}
    where $\epsilon:=\esssup_{\omega \in \Omega}|\beta_1(\omega)-\beta_2(\omega)|$.
    Consequently,
 \begin{equation}\label{ssl1}
\|h_1(\omega)-h_2(\omega)\|_{L^1} \le C_\alpha\epsilon, \quad \text{for $\mathbb P$-a.e. $\omega \in \Omega$.}
 \end{equation}
\end{theorem}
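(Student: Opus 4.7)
The plan is to compare $h_1(\omega)$ and $h_2(\omega)$ by propagating both densities backward along their respective cocycles and telescoping. Writing $\omega_j := \sigma^j\omega$ and $\mathcal{L}_{i,\omega'} := \mathcal{L}_{\beta_i(\omega')}$ for $i=1,2$, I iterate the one-step identity
\[
h_1(\omega') - h_2(\omega') = \mathcal{L}_{1,\sigma^{-1}\omega'}(h_1(\sigma^{-1}\omega') - h_2(\sigma^{-1}\omega')) + (\mathcal{L}_{1,\sigma^{-1}\omega'} - \mathcal{L}_{2,\sigma^{-1}\omega'}) h_2(\sigma^{-1}\omega')
\]
to obtain, for each $n\ge 1$,
\[
h_1(\omega) - h_2(\omega) = \mathcal{L}^n_{1,\omega_{-n}}\bigl(h_1(\omega_{-n}) - h_2(\omega_{-n})\bigr) + \sum_{k=0}^{n-1}\mathcal{L}^k_{1,\omega_{-k}}\bigl(\mathcal{L}_{1,\omega_{-(k+1)}} - \mathcal{L}_{2,\omega_{-(k+1)}}\bigr) h_2(\omega_{-(k+1)}).
\]
After pairing with $\psi\in L^\infty(m)$ and bounding by $\|\psi\|_\infty$ times the $L^1$-norm of each contribution, \eqref{eq:Lipbound} reduces to estimating each piece uniformly in $n$. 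The consequence \eqref{ssl1} then follows from \eqref{eq:Lipbound} by choosing $\psi:=\mathrm{sign}(h_1(\omega)-h_2(\omega))$.

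The boundary term is immediate from Theorem~\ref{DEC}: by Proposition~\ref{p1} both $h_1(\omega_{-n})$ and $h_2(\omega_{-n})$ lie in $\mathcal{C}_*(a)$ with unit mass, so their difference has zero mean and
\[
\|\mathcal{L}^n_{1,\omega_{-n}}(h_1(\omega_{-n})-h_2(\omega_{-n}))\|_{L^1}\le 2C_\alpha n^{-1/\alpha+1},
\]
which vanishes as $n\to\infty$. For each summand in the telescope, the key ingredient is the pointwise identity $\partial_\gamma \mathcal{L}_\gamma \phi = -(X_\gamma N_\gamma(\phi))'$, obtained by differentiating $N_\gamma\phi(x)=g_\gamma'(x)\phi(g_\gamma(x))$ using $\partial_\gamma g_\gamma = -X_\gamma g_\gamma'$. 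It enables the representation
\[
(\mathcal{L}_{1,\omega_{-(k+1)}}-\mathcal{L}_{2,\omega_{-(k+1)}})h_2(\omega_{-(k+1)}) = -\int_{\beta_2(\omega_{-(k+1)})}^{\beta_1(\omega_{-(k+1)})}\bigl(X_\gamma N_\gamma(h_2(\omega_{-(k+1)}))\bigr)'\,d\gamma.
\]
Since $h_2(\omega_{-(k+1)})\in\mathcal{C}_*(a)\cap\mathcal{C}_2(b_1,b_2)$ with unit mass, Lemma~\ref{estim} decomposes the integrand, for each $\gamma$, as a difference $\psi_{1,\gamma,k}-\psi_{2,\gamma,k}$ of cone elements in $\mathcal{C}_*(a)$ with equal mean and $L^1$ norm bounded by $D$. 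Exchanging the $\gamma$-integral with $\mathcal{L}^k_{1,\omega_{-k}}$ and invoking Theorem~\ref{DEC} for $k\ge 1$ (and the trivial bound $2D\epsilon$ for $k=0$) gives
\[
\bigl\|\mathcal{L}^k_{1,\omega_{-k}}(\mathcal{L}_{1,\omega_{-(k+1)}}-\mathcal{L}_{2,\omega_{-(k+1)}})h_2(\omega_{-(k+1)})\bigr\|_{L^1}\le 2C_\alpha D\,\epsilon\,(\max(k,1))^{-1/\alpha+1}.
\]

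The main analytic obstacle is summability of this bound in $k$: a uniform-in-$n$ control of the telescope requires $\sum_{k\ge 1} k^{-1/\alpha+1}<\infty$, hence $-1/\alpha+1<-1$, equivalently $\alpha<1/2$. This is the quantitative role of the hypothesis $\esssup \beta_i<1/2$, and is essentially unavoidable with these tools: Theorem~\ref{DEC} delivers only polynomial decay, and its exponent must outpace the linearly growing length of the telescope. Granted this, summing in $k$ and sending $n\to\infty$ produces \eqref{eq:Lipbound} with a constant depending only on $\alpha$.
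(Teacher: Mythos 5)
Your proof is correct and follows essentially the same route as the paper: decompose $h_1(\omega)-h_2(\omega)$ into a boundary term $\mathcal L^n_{1,\omega_{-n}}(h_1(\omega_{-n})-h_2(\omega_{-n}))$ controlled by Theorem~\ref{DEC}, plus a telescope sum whose terms are written via $\partial_\gamma\mathcal L_\gamma\phi=-(X_\gamma N_\gamma\phi)'$, decomposed into cone elements by Lemma~\ref{estim}, and contracted by Theorem~\ref{DEC}, finishing with the observation that $\sum_k k^{-1/\alpha+1}<\infty$ precisely because $\alpha<1/2$. Minor cosmetic differences (the paper keeps $n$ finite and chooses it large so that $(II)_{\omega,n}\le C_\alpha\epsilon\|\psi\|_\infty$, while you send $n\to\infty$; the paper's convention $1/(n-j)^{1/\alpha-1}:=1$ at $j=n$ is your $\max(k,1)$) do not change the argument.
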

\begin{proof}
Observe that the conclusion of the theorem holds trivially in the case when $\epsilon=0$. Therefore, in the sequel, we assume that $\epsilon>0$.
By $\mathcal L_{\omega, i}$, we denote the transfer operator associated with $T_{\beta_i(\omega)}$ for $\omega \in \Omega$ and $i=1, 2$.
We start by observing that for $\mathbb P$-a.e. $\omega \in \Omega$ and every $n\in \N$,
\[
    \begin{split}
		&\int_0^1\psi\left(h_1(\omega)-h_2(\omega)\right)\,dm \\
        &=\int_0^1\psi(\L_{\sigma^{-n}\omega, 1}^n -\L_{\sigma^{-n}\omega, 2}^n)h_2(\sigma^{-n}\omega)\,dm+\int_0^1\psi\L_{\sigma^{-n}\omega, 1}^n (h_1(\sigma^{-n}\omega)-h_2(\sigma^{-n}\omega))\,dm \\
  &=:(I)_{\omega, n}+(II)_{\omega, n}.
	\end{split}
 \]
 Since $h_i(\omega)\in \C_\ast(a)$, $i=1, 2$ have integral one, Theorem~\ref{DEC} implies that 
 \begin{align*}
\left|\int_0^1\psi \L_{\sigma^{-n}\omega, 1}^n (h_1(\sigma^{-n}\omega)-h_2(\sigma^{-n}\omega))\, dm\right|\le C_\alpha\|\psi\|_\infty n^{-1/\alpha+1},
	\end{align*}
    for $\mathbb P$-a.e. $\omega \in \Omega$ and $n\in \N$. 
 By choosing $n$ sufficiently large, we obtain that 
 \begin{equation}\label{eq:boundII}
	|(II)_{\omega, n}|\le C_\alpha \epsilon \|\psi \|_\infty, \quad \text{for $\mathbb P$-a.e. $\omega \in \Omega$.}
\end{equation}
On the other hand, we have that
\begin{align*}
(I)_{\omega, n}&=\int_0^1\psi\sum_{j=1}^n\L_{\sigma^{j-n}\omega, 1}^{n-j}(\L_{\sigma^{j-1-n}\omega, 1}-\L_{\sigma^{j-1-n}\omega, 2})\L_{\sigma^{-n}\omega, 2}^{j-1}h_2(\sigma^{-n}\omega)\,dm\\
&=\sum_{j=1}^n\int_0^1\psi \L_{\sigma^{j-n}\omega, 1}^{n-j}\int_{\beta_1(\sigma^{j-1-n}\omega)}^{\beta_2(\sigma^{j-1-n}\omega)}\partial_{\gamma}\L_{\gamma}(h_2(\sigma^{j-1-n}\omega))\,d\gamma \, dm.
\end{align*}
Since (see~\cite[p.865]{BT}) $
\partial_\gamma\L_\gamma(\phi)=-(X_\gamma  N_\gamma(\phi))'
$, one has that 
\begin{align*}
(I)_{\omega, n}=-\sum_{j=1}^n\int_0^1\int_{\beta_1(\sigma^{j-1-n}\omega)}^{\beta_2(\sigma^{j-1-n}\omega)}\psi \L_{\sigma^{j-n}\omega, 1}^{n-j}(X_{\gamma}  N_{\gamma}(h_2(\sigma^{j-n-1}\omega)))'\,  d\gamma\, dm.
\end{align*}

Since 
$h_2(\sigma^{j-1-n}\omega)\in \mathcal C_*(a)\cap \mathcal C_2(b_1, b_2)$ and $\int_0^1h_2(\sigma^{j-1-n}\omega)\, dm=1$,   it follows from Lemma~\ref{estim} that  $(X_{\gamma}  N_{\gamma}(h_2(\sigma^{j-1-n}\omega)))'$ has zero integral and it is a difference of two functions in $\mathcal C_\ast (a)$ whose $L^1(m)$-norms are bounded by some constant independent on $\gamma$, $j$ and $\omega$.
 Hence, \eqref{dec}  implies
 that 
\[
\begin{split}
&\left |\int_0^1\int_{\beta_1(\sigma^{j-1-n}\omega)}^{\beta_2(\sigma^{j-1-n}\omega)}\psi \L_{\sigma^{j-n}\omega, 1}^{n-j}(X_{\gamma}  N_{\gamma}(h_2(\sigma^{j-n-1}\omega)))'\,  d\gamma\, dm \right | \\
&\le \int_{\beta_1(\sigma^{j-1-n}\omega)}^{\beta_2(\sigma^{j-1-n}\omega)}\|\psi \L_{\sigma^{j-n}\omega, 1}^{n-j}(X_{\gamma}  N_{\gamma}(h_2(\sigma^{j-n-1}\omega)))'\|_{L^1(m)}\, d\gamma \\
&\le \| \psi \|_{L^\infty(m)}\int_{\beta_1(\sigma^{j-1-n}\omega)}^{\beta_2(\sigma^{j-1-n}\omega)}\|\L_{\sigma^{j-n}\omega, 1}^{n-j}(X_{\gamma}  N_{\gamma}(h_2(\sigma^{j-n-1}\omega)))'\|_{L^1(m)}\, d\gamma  \\
&\le C_\alpha |\beta_2(\sigma^{j-1-n}\omega)-\beta_1(\sigma^{j-1-n}\omega)| \dfrac{1}{(n-j)^{1/\alpha-1}} \|\psi\|_{L^\infty} \\
&\le C_\alpha \epsilon\dfrac{1}{(n-j)^{1/\alpha-1}}\|\psi\|_{L^\infty},
\end{split}
\]
with the convention that 
$\frac{1}{(n-j)^{1/\alpha-1}}:=1$ for $j=n$.
 Hence, for $\mathbb P$-a.e. $\omega \in \Omega$,
\begin{equation}\label{eq:boundI}
|(I)_{\omega,n}| \le C_\alpha \epsilon \| \psi\|_{L^\infty}\sum_{j=1}^n  \dfrac{1}{(n-j)^{1/\alpha-1}} \le C_\alpha \epsilon \|\psi\|_{L^\infty},
\end{equation}
since the series  $\sum_{j\ge 1} \dfrac{1}{j^{1/\alpha-1}}$ converges. The conclusion of the proposition now follows readily from~\eqref{eq:boundII} and \eqref{eq:boundI}.

\end{proof}

	\section{Quenched linear response}\label{S:LinRep}
 In this section, we establish a quenched linear response for families of perturbed intermittent random systems.
 Let $0<\underline{\alpha}<\alpha <\frac 1 2$ and $\epsilon_0>0$. 
Take $\beta\colon \Omega\to (0, 1)$, $\delta \colon \Omega \to [0, 1)$   measurable maps such that
\begin{equation}\label{eq:randomB}
\underline{\alpha}\le \essinf_{\omega \in \Omega}\beta(\omega)-\epsilon_0 \quad \text{and} \quad \esssup_{\omega \in \Omega}\beta(\omega)+\epsilon_0\le \alpha.
\end{equation}
By $(h_\varepsilon(\omega))_{\omega \in \Omega}$ we will denote the random a.c.i.m associated with the cocycle \\$(T_{\beta(\omega)+\epsilon \delta(\omega)})_{\omega \in \Omega}$ for $\epsilon \in (-\epsilon_0, \epsilon_0)$. It follows from Proposition~\ref{p1} and its proof that $h_\epsilon(\omega)$ is uniquely determined for $\omega \in \Omega'$ and each $\epsilon \in (-\epsilon_0, \epsilon_0)$, where $\Omega'\subset \Omega$ is a $\sigma$-invariant set of full measure such that 
\[
\underline{\alpha}\le \beta(\omega)-\epsilon_0 \quad \text{and} \quad \beta(\omega)+\epsilon_0 \le \alpha,
\]
for each $\omega \in \Omega'$. In the sequel, we will also write $h(\omega)$ instead of $h_0(\omega)$.

 The main result is the following.
                                \begin{theorem}
                                \label{thm:LR}
                                For  $\omega \in \Omega'$ and $\psi\in L^\infty(m)$,
                \begin{equation}\label{eq:LR}
                \begin{split}
               & \lim_{\epsilon\to0}\frac{1}{\epsilon} \int_0^1\psi\left(h_\epsilon(\omega)-h(\omega)\right)dm\\
               &=
                - \sum_{i=0}^\infty \delta(\sigma^{-(i+1)}\omega) \int_0^1\psi \, \L_{\sigma^{-i}\omega}^i \left(X_{\beta(\sigma^{-(i+1)}\omega)} N_{\beta(\sigma^{-(i+1)}\omega)} (h(\sigma^{-(i+1)}\omega))\right)'\, dm,
                \end{split}
                \end{equation}
         where $\L_\omega$ is the transfer operator associated with $T_{\beta(\omega)}$.
                \end{theorem}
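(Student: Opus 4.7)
The plan is to emulate the decomposition used in the proof of Theorem~\ref{SS}. For fixed $\omega\in\Omega'$ and $N\in\N$, write
\[
\int_0^1\psi\bigl(h_\epsilon(\omega)-h(\omega)\bigr)\,dm = (I)_{\omega,N}+(II)_{\omega,N},
\]
where
\[
(I)_{\omega,N}=\int_0^1\psi\,(\L_{\sigma^{-N}\omega,\epsilon}^N-\L_{\sigma^{-N}\omega}^N)h(\sigma^{-N}\omega)\,dm
\]
and
\[
(II)_{\omega,N}=\int_0^1\psi\,\L^N_{\sigma^{-N}\omega,\epsilon}\bigl(h_\epsilon(\sigma^{-N}\omega)-h(\sigma^{-N}\omega)\bigr)\,dm,
\]
with $\L_{\omega,\epsilon}$ the transfer operator of $T_{\beta(\omega)+\epsilon\delta(\omega)}$. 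The goal is to analyze each piece after dividing by $\epsilon$, letting $N=N(\epsilon)\to\infty$ at a suitable rate.

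\textbf{Main term.} Telescoping $(I)_{\omega,N}$, using equivariance to replace $\L^{j-1}_{\sigma^{-N}\omega}h(\sigma^{-N}\omega)$ by $h(\sigma^{j-1-N}\omega)$, invoking the Baladi--Todd identity $\partial_\gamma\L_\gamma\phi=-(X_\gamma N_\gamma\phi)'$ (as in the proof of Theorem~\ref{SS}), and reindexing $i=N-j$ yields
\[
\frac{(I)_{\omega,N}}{\epsilon} = -\sum_{i=0}^{N-1}\int_0^1\psi\,\L_{\sigma^{-i}\omega,\epsilon}^i A_i^\epsilon\,dm,
\]
where
\[
A_i^\epsilon:=\frac{1}{\epsilon}\int_{\beta(\sigma^{-(i+1)}\omega)}^{\beta(\sigma^{-(i+1)}\omega)+\epsilon\delta(\sigma^{-(i+1)}\omega)}\bigl(X_\gamma N_\gamma h(\sigma^{-(i+1)}\omega)\bigr)'\,d\gamma.
\]
For each fixed $i$, $A_i^\epsilon$ converges in $L^1(m)$ to $\delta(\sigma^{-(i+1)}\omega)\bigl(X_{\beta(\sigma^{-(i+1)}\omega)}N_{\beta(\sigma^{-(i+1)}\omega)}h(\sigma^{-(i+1)}\omega)\bigr)'$ and $\L^i_{\sigma^{-i}\omega,\epsilon}\to\L^i_{\sigma^{-i}\omega}$ strongly on $L^1(m)$ as $\epsilon\to 0$, so the $i$-th term converges to the $i$-th summand in~\eqref{eq:LR}.

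\textbf{Uniform summability.} By Lemma~\ref{estim} together with the convexity of $\C_*(a)$, each $A_i^\epsilon$ can be written as $\Psi_1^{i,\epsilon}-\Psi_2^{i,\epsilon}$ with $\Psi_j^{i,\epsilon}\in\C_*(a)$ having equal integrals and $L^1$-norms bounded by $D\|\delta\|_\infty$ for some $D=D(\underline{\alpha},\alpha)$, uniformly in $i,\epsilon$. Applying Theorem~\ref{DEC} to the perturbed cocycle---valid by \eqref{eq:randomB}---gives
\[
\Bigl|\int_0^1 \psi\,\L^i_{\sigma^{-i}\omega,\epsilon}A_i^\epsilon\,dm\Bigr|\le C_\alpha\|\psi\|_\infty\|\delta\|_\infty\,i^{-1/\alpha+1}
\]
for $i\ge 1$ (with $i=0$ bounded trivially). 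As $\alpha<1/2$ makes $\sum_i i^{-1/\alpha+1}$ convergent, dominated convergence produces the infinite series on the right-hand side of~\eqref{eq:LR} in the limit.

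\textbf{Tail term and main obstacle.} The remainder $(II)_{\omega,N}$ obeys two complementary bounds: Theorem~\ref{DEC} applied to $h_\epsilon(\sigma^{-N}\omega),h(\sigma^{-N}\omega)\in\C_*(a)$ of unit integral gives $|(II)_{\omega,N}|\le 2C_\alpha\|\psi\|_\infty N^{-1/\alpha+1}$, while Theorem~\ref{SS} gives $|(II)_{\omega,N}|\le C_\alpha\|\psi\|_\infty\|\delta\|_\infty\epsilon$. Choosing $N=N(\epsilon)\to\infty$ with $N(\epsilon)^{-1/\alpha+1}=o(\epsilon)$---for instance $N(\epsilon)=\lfloor\epsilon^{-s}\rfloor$ with $s>\alpha/(1-\alpha)$, possible because $\alpha<1/2$ gives $\alpha/(1-\alpha)<1$---forces $(II)_{\omega,N(\epsilon)}/\epsilon\to 0$. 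The main obstacle is precisely this coordination of $N$ and $\epsilon$: one must simultaneously control the tail of the infinite series beyond index $N(\epsilon)$, the remainder $(II)_{\omega,N(\epsilon)}/\epsilon$, and the termwise convergence of the truncated sum under a summable dominating series. The hypothesis $\alpha<1/2$, via the decay exponent $1/\alpha-1>1$ in Theorem~\ref{DEC}, is what makes these requirements simultaneously satisfiable.
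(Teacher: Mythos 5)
Your proposal is correct in its essential structure, and it is genuinely different from — and noticeably simpler than — the proof in the paper. The authors follow Lepp{\"a}nen's template: they interpolate through the constant function $1$, writing $h_\epsilon(\omega)-h(\omega)$ as $(\L^\epsilon)^l 1 - \L^l 1$ plus two correction terms of size $O(l^{1-1/\alpha})$, then telescope and perform a \emph{second-order} Taylor expansion of $\L^\epsilon-\L$. The telescoped pieces involve $\L^{l-1-j}_{\sigma^{-l}\omega}1$ rather than the density $h(\sigma^{-(j+1)}\omega)$, and the bulk of the paper's technical work (the splitting \eqref{eq:XNdiff}--\eqref{eq:almostlast2}, and Lemmas~\ref{newlem1} and~\ref{lem:dist-2} on the structure of the intervals $b_p^\omega$ and the distortion $(T_\omega^\ell)''/((T_\omega^\ell)')^2$) is devoted to proving that $(X_\gamma N_\gamma(\L^{l-1-j}1))' \to (X_\gamma N_\gamma h)'$ in $L^1(m)$ as $l\to\infty$. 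Your decomposition $h_\epsilon(\omega)-h(\omega) = [(\L^\epsilon)^N - \L^N]h(\sigma^{-N}\omega) + (\L^\epsilon)^N(h_\epsilon(\sigma^{-N}\omega)-h(\sigma^{-N}\omega))$ exploits the equivariance $\L_{\sigma^{-N}\omega}^{j-1}h(\sigma^{-N}\omega) = h(\sigma^{j-1-N}\omega)$ so that the telescoped summands already contain the correct densities $h(\sigma^{-(i+1)}\omega)$; this removes the need for the $\L^{l-1-j}1\to h$ comparison and hence for the two distortion lemmas. Moreover, by keeping $\L^\epsilon - \L$ in exact mean-value form $\int_\beta^{\beta+\epsilon\delta}\partial_\gamma\L_\gamma\,d\gamma$ (which is what your $A_i^\epsilon$ encodes) you avoid the second-order remainder entirely, and therefore also Lemma~\ref{HZU} and the cone $\C_3$. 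The hypothesis $\alpha<1/2$ is still needed, but in your version only for summability of $i^{1-1/\alpha}$, whereas the paper needs it additionally in the bound on \eqref{eq:almostlast2}(II). A few routine verifications are left implicit in your sketch and should be spelled out in a full write-up: the $L^1(m)$-continuity of $\gamma\mapsto (X_\gamma N_\gamma h(\sigma^{-(i+1)}\omega))'$ (pointwise continuity on $(0,1]$ plus the uniform dominating bound $C x^{-\delta}$ from Lemma~\ref{estim}); the fact that Theorem~\ref{DEC} applies with the same $C_\alpha$ to the perturbed cocycle (this is where \eqref{eq:randomB} enters, since all perturbed parameters still lie in $[\underline{\alpha},\alpha]$ on $\Omega'$); that Bochner integration of $\C_*(a)$-valued functions stays in the closed convex cone $\C_*(a)$, so that $A_i^\epsilon$ is a difference of two cone elements of equal integral and uniformly bounded $L^1$ norm; and the strong convergence $(\L^\epsilon_{\sigma^{-i}\omega})^i\to\L^i_{\sigma^{-i}\omega}$ applied to the fixed limit function, which follows by telescoping and the bound $\|\partial_\gamma\L_\gamma\bar\psi\|_{L^1}\le c$ for $\bar\psi\in\C_*(a)\cap C^1$ with controlled integral (the estimate \eqref{uwr} in the paper). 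These are all at the level of what the paper itself assumes the reader can supply, so I regard the argument as sound. One gain of the paper's longer route is that it yields a rate, $O(|\epsilon|^{1-2\alpha})$, recorded after Theorem~\ref{thm:LR}; extracting an analogous rate from your argument would require quantifying the termwise convergence $A_i^\epsilon\to A_i$ and $(\L^\epsilon)^i\to\L^i$ uniformly in $i$, which is not done in your sketch.
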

\begin{remark}\label{lrremark}
Consider a parametrized family $(T_{\omega, \epsilon})_{\omega \in \Omega}$ for $\epsilon \in (-\epsilon_0, \epsilon_0)$ of random dynamical systems on a compact manifold $M$ that admit a unique physical measure $\mu_\epsilon$ which is absolutely continuous with respect to $\mathbb P\times m$, where $m$ is the Lebesgue measure on $M$. Let $H_\epsilon  \in L^1(\mathbb P\times m)$ be such that $d\mu=H_\epsilon d(\mathbb P\times m)$ and denote $h_\epsilon (\omega)=H_\epsilon (\omega, \cdot)$. In general, for each $\epsilon \in (-\epsilon_0, \epsilon_0)$, $h_\epsilon(\omega)$ is uniquely determined on a set $\Omega_\epsilon \subset \Omega$ of full measure. Consequently, the statement in the spirit of~\eqref{eq:LR} makes sense for $\omega \in \bigcap_{\epsilon}\Omega_\epsilon$. In principle, this intersection can fail to be a set of full measure or even  measurable. We refer to~\cite{dragivcevic2023quenched} for a more detailed discussion.  In our case, by Proposition~\ref{p1}, $h_\epsilon (\omega)$ is uniquely determined on a set of full measure that does not depend on $\epsilon$.

\end{remark}

Before proving Theorem~\ref{thm:LR},
 we begin with the following auxiliary result which is similar in nature to~\cite[Lemma 4.5]{L}.
\begin{lemma}\label{HZU}
For $0<\gamma_0 \le \alpha$, there exist $0<\delta \le \alpha$ and $C_1', C_2'>0$ such that 
\begin{equation}\label{p11}
|\partial_\gamma^2 \mathcal L_\gamma (\phi)(x)| \le C_1'x^{-\delta} \quad \text{and} \quad |(\partial_\gamma^2 \mathcal L_\gamma (\phi))'(x)| \le C_2'x^{-\delta-1},
\end{equation}
for $x\in (0, 1]$, $\gamma \in [\gamma_0, \alpha]$ and $\phi \in \mathcal C_*(a)\cap \mathcal C_3(b_1, b_2, b_3)$ with $m(\phi)=1$. Moreover, there exist $\psi_i \in \C_\ast(a)$, $i\in \{1, 2\}$ such that 
\[
\partial_\gamma^2 \L_\gamma (\phi)=\psi_1-\psi_2,
\]
with $\int_0^1 \psi_1 \, dm = \int_0^1 \psi_2 \, dm$
and $\| \psi_i\|_{L^1(m)}\le D$, $i \in \{1, 2\}$ for some $D>0$ which depends only on $\gamma_0$ and $\alpha$.
\end{lemma}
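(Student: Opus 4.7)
The proof will follow the same template as Lemma~\ref{estim}, applying one additional $\gamma$-derivative to the Baladi--Todd identity $\partial_\gamma \mathcal{L}_\gamma(\phi) = -(X_\gamma N_\gamma(\phi))'$. Since $\partial_\gamma$ commutes with $\partial_x$, one gets
\[
\partial_\gamma^2 \mathcal{L}_\gamma(\phi) = -\partial_x U_\gamma(\phi), \qquad U_\gamma(\phi) := (\partial_\gamma X_\gamma)\, N_\gamma(\phi) + X_\gamma\, \partial_\gamma N_\gamma(\phi),
\]
so the first assertion~\eqref{p11} amounts to pointwise bounds on $U_\gamma(\phi)'$ and $U_\gamma(\phi)''$. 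The $\mathcal{C}_3$ hypothesis on $\phi$ is imposed precisely so that $N_\gamma(\phi)$ admits a controlled third $x$-derivative, which is the highest one needed after expanding $U_\gamma(\phi)''$ via the product rule.

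The first step is to assemble the needed estimates on the four factors $X_\gamma$, $N_\gamma(\phi)$, $\partial_\gamma X_\gamma$, $\partial_\gamma N_\gamma(\phi)$, and their $x$-derivatives up to order two. Those for $X_\gamma^{(k)}$ and $N_\gamma(\phi)^{(k)}$, $k \leq 3$, are already at hand: the first family was quantified in \cite[(2.3)--(2.5)]{BT} and used in Lemma~\ref{estim}, while the second is immediate from Proposition~\ref{prop:conecontraction} (giving $|N_\gamma(\phi)^{(k)}(x)| \leq C_k a x^{-\alpha-k}$ for $k\leq 3$ via $N_\gamma(\phi) \in \mathcal{C}_3$ and the trivial bound~\eqref{eq:trivialboundong} on $g_\gamma$). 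For the $\gamma$-derivatives, using the explicit formula \eqref{def:mapperturb} for $X_\gamma$ and the implicit relation $f_\gamma(g_\gamma(x))=x$ (which yields $\partial_\gamma g_\gamma = -(\partial_\gamma f_\gamma)\circ g_\gamma / (f_\gamma'\circ g_\gamma)$), a direct computation shows that $\partial_\gamma X_\gamma$ and $\partial_\gamma N_\gamma(\phi)$, together with their $x$-derivatives, satisfy bounds of the same order as $X_\gamma$ and $N_\gamma(\phi)$ respectively, multiplied by at most a power of $(1-\log x)$. Combining these through the product rule yields estimates of the shape
\[
|U_\gamma(\phi)^{(k)}(x)| \leq C\, a\, x^{\gamma-\alpha-k+1}(1-\log x)^{M}, \qquad k \in \{0,1,2\},
\]
with $C$ and $M$ depending only on $\alpha$ (via $b_1,b_2,b_3$).

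Choosing $r>0$ small enough that $\delta := \alpha + r - \gamma_0 \leq \alpha$, and exploiting $\lim_{x\to 0^+} (1-\log x)^{M} x^{r} = 0$ exactly as in the passage from~\eqref{tuz} to the displayed bound in Lemma~\ref{estim}, the estimates on $U_\gamma(\phi)'$ and $U_\gamma(\phi)''$ translate immediately into~\eqref{p11}, since $\partial_\gamma^2 \mathcal{L}_\gamma(\phi) = -U_\gamma(\phi)'$.

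For the decomposition, differentiating~\eqref{intpres} twice in $\gamma$ gives $m(\partial_\gamma^2 \mathcal{L}_\gamma(\phi)) = 0$. Hence $F := \partial_\gamma^2 \mathcal{L}_\gamma(\phi)$ satisfies the hypotheses of Lemma~\ref{insidecone} (with the constants $C_1,C_2$ there taken to be $C_1'/a, C_2'/a$), which produces $\lambda>0$, depending only on $\gamma_0$ and $\alpha$, such that $F + \lambda X^{-\alpha} \in \mathcal{C}_*(a)$. Setting $\psi_1 := F + \lambda X^{-\alpha}$ and $\psi_2 := \lambda X^{-\alpha}$ (the latter in $\mathcal{C}_*(a)$ by Remark~\ref{181}) yields the required decomposition; the equality $m(\psi_1) = m(\psi_2)$ follows from $m(F)=0$, and the uniform $L^1$ bound $\|\psi_i\|_{L^1(m)} \leq D$ follows from the bound $|F(x)| \leq C_1' x^{-\delta}$ with $\delta<1$. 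The main obstacle is the careful bookkeeping in the middle step---propagating the chain-rule expansion of $\partial_\gamma N_\gamma(\phi)$ (which involves $\partial_\gamma g_\gamma$, $\partial_\gamma g_\gamma'$ and products with $\phi^{(j)}\circ g_\gamma$) through two further $x$-derivatives, while verifying that every polylogarithmic loss can be absorbed into $x^{-r}$ uniformly in $\gamma \in [\gamma_0, \alpha]$.
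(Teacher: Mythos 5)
Your proposal is correct and follows essentially the same strategy as the paper's proof. The paper simply invokes the explicit formula from Lepp\"anen's Lemma~4.3 for $\partial_\gamma^2\L_\gamma(\phi)$, in which your term $X_\gamma\,\partial_\gamma N_\gamma(\phi)$ has already been packaged as $-X_\gamma (X_\gamma N_\gamma\phi)'$ via the identity $\partial_\gamma N_\gamma\phi=-(X_\gamma N_\gamma\phi)'$ (a consequence of the relation $\partial_\gamma g_\gamma=-X_\gamma g_\gamma'$ that you rederive from $f_\gamma\circ g_\gamma=\mathrm{id}$); from there both arguments proceed identically---estimate each factor using the [BT, (2.3)--(2.5)] bounds, the cone $\mathcal C_3$ for the third derivative of $N_\gamma\phi$, absorb the polylogarithmic loss into $x^{-r}$ with $\delta=\alpha+r-\gamma_0$, and then apply $m(\partial_\gamma^2\L_\gamma\phi)=0$ together with Lemma~\ref{insidecone} exactly as in Lemma~\ref{estim}.
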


\begin{proof}
Throughout the proof of this lemma, $c$ will denote a generic positive constant (independent on $x$, $\phi$ and $\gamma$) that can change from one occurrence to the next.
By~\cite[Lemma 4.3.]{L}, we have that 
\begin{equation}\label{secondderiv}
\partial_\gamma^2 \mathcal L_\gamma (\phi)(x)=(\partial_\gamma X_\gamma N_\gamma (\phi))'(x)+X_\gamma'(X_\gamma N_\gamma (\phi))'(x)+X_\gamma (X_\gamma N_\gamma (\phi))''(x).
\end{equation}
Moreover, by \cite[p. 866]{BT}, there exists $c>0$ such that 
\begin{equation}\label{abu}
|\partial_\gamma X_\gamma (x)| \le cx^{1+\gamma}(1-\log x)^2 \quad \text{and} \quad |\partial_\gamma X_\gamma'(x)| \le cx^\gamma (1-\log x)^2,
\end{equation}
for $x\in (0, 1]$ and $\gamma \in [\gamma_0, \alpha]$. 
Then, we firstly have (using~\eqref{i1}, \eqref{i2} and~\eqref{abu}) that 
\[
\begin{split}
|(\partial_\gamma X_\gamma N_\gamma (\phi))'(x)| &=|(\partial_\gamma X_\gamma)'(x)N_\gamma (\phi)(x)+\partial_\gamma X_\gamma (x)(N_\gamma (\phi))'(x)|  \\
&\le |\partial_\gamma X_\gamma'(x)N_\gamma (\phi)(x)|+|\partial_\gamma X_\gamma (x)(N_\gamma (\phi))'(x)|  \\
&\le cx^\gamma (1-\log x)^2 3ax^{-\alpha}+cx^{1+\gamma}(1-\log x)^2 3ab_1x^{-1-\alpha}\\
&=c x^{\gamma-\alpha}(1-\log x)^2.
\end{split}
\]
Secondly,  using~\cite[(2.4)]{BT} and~\eqref{tuz} we obtain that  
\[
|X_\gamma'(x)(X_\gamma N_\gamma (\phi))'(x)| \le cx^{2\gamma -\alpha}(1-\log x)^2 \le cx^{\gamma-\alpha}(1-\log x)^2,
\]
for $x\in (0, 1]$.  Thirdly, \cite[(2.3)]{BT} and~\eqref{i3} give that 
\[
|X_\gamma (x) (X_\gamma N_\gamma (\phi))''(x)| \le cx^{\gamma-\alpha}(1-\log x)^2,
\]
for $x\in (0, 1]$. Hence, \eqref{secondderiv}  implies that 
\[
|\partial_\gamma^2 \mathcal L_\gamma (\phi)(x)| \le cx^{\gamma_0-\alpha}(1-\log x)^2,
\]
for $x\in (0, 1]$.  Thus, by choosing $\delta$ as in the proof of Lemma~\ref{estim}, we obtain the existence of $C_1'>0$ such that the first estimate in~\eqref{p11} holds.

In addition, differentiating \eqref{secondderiv} yields 
\[
\begin{split}
(\partial_\gamma^2 \mathcal L_\gamma (\phi))'(x) &=(\partial_\gamma  X_\gamma'')(x)N_\gamma (\phi)(x)+\partial_\gamma X_\gamma'(x)(N_\gamma (\phi))'(x)+
\partial_\gamma X_\gamma'(x)(N_\gamma (\phi))'(x)\\
&\phantom{=}+\partial_\gamma X_\gamma (x)(N_\gamma (\phi))''(x)+
X_\gamma''(x)(X_\gamma N_\gamma (\phi))'(x)+
X_\gamma'(x)(X_\gamma N_\gamma (\phi))''(x) \\
&\phantom{=}+X_\gamma'(x)(X_\gamma N_\gamma (\phi))''(x)+X_\gamma(x)(X_\gamma N_\gamma (\phi))^{(3)}(x).
\end{split}
\]
Since (see~\cite[p.18]{L}) $|\partial_\gamma X_\gamma''(x)| \le cx^{\gamma-1}(1-\log x)^2$, we have that 
\[
|(\partial_\gamma  X_\gamma'')(x)N_\gamma (\phi)(x)| \le cx^{\gamma-\alpha-1}(1-\log x)^2, \quad x\in (0, 1].
\]
One can in an analogous manner treat all other terms (showing that they are of order $x^{\gamma-\alpha-1}(1-\log x)^2$) except for $X_\gamma^2(N_\gamma (\phi))^{(3)}$ (which comes from developing $X_\gamma (X_\gamma N_\gamma (\phi))^{(3)}$). Since $\phi \in \mathcal C_3$, we have that $N_\gamma \phi \in \mathcal C_3$, and consequently 
\[
|X_\gamma^2(x)(N_\gamma (\phi))^{(3)}(x)| \le cx^{2+2\gamma}(1-\log x)^2 x^{-3-\alpha}\le cx^{\gamma-\alpha-1}(1-\log x)^2, \quad x\in (0, 1].
\]
The second estimate in~\eqref{p11} now follows easily.

Since $m(\partial_\gamma^2 \L_\gamma (\phi))=0$, the second assertion of the lemma follows from~\eqref{p11} and Lemma~\ref{insidecone} by arguing exactly as in the proof of Lemma~\ref{estim}.
\end{proof}

\begin{remark}
The proof of Theorem~\ref{thm:LR} below is guided by the proof of~\cite[Theorem 1.1]{L}. Since we consider the case of random dynamics, adequate changes are incorporated. In addition, in our setting one lacks the sharp control on the numbers $b_p^\omega$ introduced in~\eqref{b_pt} which holds in the autonomous setting. More precisely, in the autonomous case 
in~\eqref{second} $p^{-1/\underline{\alpha}}$ can be replaced by $p^{-1/\alpha}$. 

We stress that we do not know how to obtain a version of Theorem~\ref{thm:LR} without the requirement that $\alpha<1/2$. Indeed, without this condition we are even unable to show that the series in~\eqref{eq:LR} converges. 

\end{remark}

\begin{proof}[Proof of Theorem~\ref{thm:LR}]
We first show that the series  in~\eqref{eq:LR} converges. Indeed, this easily follows from~\eqref{dec} and  Lemma~\ref{estim}  since  $h(\omega)\in \C_\ast(a)\cap \C_2(b_1, b_2)$ for $\omega \in \Omega'$.
Next, we show the equality in \eqref{eq:LR}.
Let us fix $\omega\in \Omega'$ arbitrary and set\begin{equation}\label{eq:L}
L=L(\omega):=- \sum_{i=0}^\infty \delta(\sigma^{-(i+1)}\omega)   \int_0^1 \psi \, \L_{\sigma^{-i}\omega}^i \left(X_{\beta(\sigma^{-(i+1)} \omega)}N_{\beta(\sigma^{-(i+1)}\omega)} (h(\sigma^{-(i+1)}\omega)) \right)'\, dm.
\end{equation}
Then,  for every $l\in \N$ we have 
\[
\begin{split}
&\int_0^1\psi (h_\epsilon(\omega)-h(\omega))\, dm \\
&=\int_0^1 \psi (\L_{\sigma^{-l}\omega}^\epsilon)^lh_\epsilon (\sigma^{-l}\omega) \, dm-\int_0^1\psi \L_{\sigma^{-l}\omega}^l h(\sigma^{-l}\omega)\, dm\\
&=\int_0^1  \psi (\L_{\sigma^{-l}\omega}^\epsilon)^l 1\, dm-\int_0^1\psi \L_{\sigma^{-l}\omega}^l1\, dm + \int_0^1\psi (\L_{\sigma^{-l}\omega}^\epsilon)^l(h_\epsilon(\sigma^{-l}\omega)-1)\, dm \\
&\phantom{=}+\int_0^1\psi \L_{\sigma^{-l}\omega}^l(1-h(\sigma^{-l}\omega))\, dm,
\end{split}
\]
where $\L_\omega^\epsilon$ is the transfer operator associated with $T_{\beta(\omega)+\epsilon \delta(\omega)}$, 
\[
(\mathcal L_\omega^\epsilon)^p:=\L_{\sigma^{p-1}\omega}^\epsilon\circ \ldots \circ \L_{\sigma \omega}^\epsilon \circ \L_\omega^\epsilon , \quad \omega \in \Omega', \ p\in \N \setminus \{0\},
\]
and $\L_t^0:=\Id$.
Hence, \eqref{dec} implies that
\begin{equation}\label{iu}
\int_0^1\psi (h_\epsilon(\omega)-h(\omega))\, dm=\int_0^1  \psi (\L_{\sigma^{-l}\omega}^\epsilon)^l 1\, dm-\int_0^1\psi \L_{\sigma^{-l}\omega}^l1\, dm+O(\|\psi \|_{L^\infty(m)}l^{1-1/\alpha}).
\end{equation}
Take $\xi>0$ arbitrary. By~\eqref{iu}, we have that there exists $l_0\in \N$ depending only on $\xi$ and $\epsilon \in (-\epsilon_0, \epsilon_0)\setminus \{0\}$ such that  for $l\ge l_0$,
\begin{equation}\label{eq:diffder}
\begin{split}
\frac{\int_0^1\psi (h_\epsilon(\omega)-h(\omega))\, dm-L\epsilon}{\epsilon} &=\frac{\int_0^1  \psi (\L_{\sigma^{-l}\omega}^\epsilon)^l 1\, dm-\int_0^1\psi \L_{\sigma^{-l}\omega}^l1\, dm-L\epsilon}{\epsilon}\\
&\phantom{=}+O(\| \psi \|_{L^\infty(m)}\lvert \epsilon \rvert^\xi).
\end{split}
\end{equation}
On the other hand, 
\[
\int_0^1  \psi (\L_{\sigma^{-l}\omega}^\epsilon)^l 1\, dm-\int_0^1\psi \L_{\sigma^{-l}\omega}^l1\, dm=\sum_{j=0}^{l-1}\int_0^1 \psi  (\L_{\sigma^{-j}\omega}^\epsilon )^j (\L_{\sigma^{-(j+1)}\omega}^\epsilon-\L_{\sigma^{-(j+1)}\omega})\L_{\sigma^{-l}\omega}^{l-1-j}(1)\, dm.
\]
Writing 
\[
\begin{split}
&(\L_{\sigma^{-(j+1)}\omega}^\epsilon-\L_{\sigma^{-(j+1)}\omega})(\varphi)(x)\\
&=\epsilon \delta(\sigma^{-(j+1)}\omega)\partial_\gamma \L_{\beta(\sigma^{-(j+1)}\omega)} (\varphi)(x)\\
&\phantom{=}+\epsilon^2 (\delta(\sigma^{-(j+1)}\omega))^2\int_0^1 (1-t)\partial_\gamma^2 \L_{\beta(\sigma^{-(j+1)}\omega)+t\epsilon \delta(\sigma^{-(j+1)}\omega)}(\varphi)(x)\, dt,
\end{split}
\]
 we conclude that, in order to control \eqref{eq:diffder}, we must deal with the following two expressions: 
 \begin{equation}\label{E1}
 \sum_{j=0}^{l-1}\delta(\sigma^{-(j+1)}\omega) \int_0^1  \psi (\L_{\sigma^{-j}\omega}^\epsilon )^j\partial_\gamma \L_{\beta(\sigma^{-(j+1)}\omega)} (\L_{\sigma^{-l}\omega}^{l-1-j}(1))\, dm  -L
 \end{equation}
 and 
\begin{equation}\label{E2}
\epsilon \sum_{j=0}^{l-1} (\delta (\sigma^{-(j+1)}\omega))^2 \int_0^1 \int_0^1 (1-t) \psi  (\L_{\sigma^{-j}\omega}^\epsilon )^j \partial_\gamma^2 \L_{\beta(\sigma^{-(j+1)}\omega)+t\epsilon \delta(\sigma^{-(j+1)}\omega)}(\L_{\sigma^{-l}\omega}^{l-1-j}(1))\, dm\, dt.
\end{equation}
\paragraph{\bf{Bound on \eqref{E2}}.}
Since $\L_\omega^j(1)\in \C_\ast(a)\cap \C_3(b_1, b_2, b_3)$ for $\omega\in \Omega'$ and $j\in \N$, it follows from~\eqref{dec} and Lemma~\ref{HZU} that
\[
\begin{split}
&\sum_{j=1}^{l-1} (\delta(\sigma^{-(j+1)}\omega))^2 \int_0^1 \int_0^1 (1-t) |\psi  (\L_{\sigma^{-j}\omega}^\epsilon )^j \partial_\gamma^2 \L_{\beta(\sigma^{-(j+1)}\omega)+t\epsilon \delta(\sigma^{-(j+1)}\omega)}(\L_{\sigma^{-l}\omega}^{l-1-j}(1))|\, dm\, dt  \\
&\le \sum_{j=1}^\infty \int_0^1 \int_0^1 (1-t) |\psi  (\L_{\sigma^{-j}\omega}^\epsilon )^j \partial_\gamma^2 \L_{\beta(\sigma^{-(j+1)}\omega)+t\epsilon \delta(\sigma^{-(j+1)}\omega)}(\L_{\sigma^{-l}\omega}^{l-1-j}(1))|\, dm\, dt  \\
&\le D\sum_{j=1}^\infty j^{1-1/\alpha}<+\infty,
\end{split}
\]
where $D>0$ is some constant independent on $\epsilon \in (-\epsilon_0, \epsilon_0)$.  This readily implies that~\eqref{E2} converges to $0$ when $\epsilon \to 0$.

\paragraph{\bf{Bound on \eqref{E1}}.}
Take $\eta>0$.
Since $\partial_\gamma \L_{\beta(\sigma^{-(j+1)}\omega)} (\phi)=-(X_{\beta(\sigma^{-(j+1)}\omega)}N_{\beta(\sigma^{-(j+1)}\omega)}(\phi))'$, it follows from~\eqref{dec} and Lemma~\ref{estim} that
there exists $n\in \N$ such that whenever $l$ is sufficiently large, 
\begin{equation}\label{eq:27dev}
\begin{split}
&\left | \sum_{j=0}^{l-1}\delta(\sigma^{-(j+1)}\omega)\int_0^1 \psi (\L_{\sigma^{-j}\omega}^\epsilon )^j\partial_\gamma \L_{\beta(\sigma^{-(j+1)}\omega)} (\L_{\sigma^{-l}\omega}^{l-1-j}(1))\, dm  -L \right |  \\
&=\bigg |-\sum_{j=0}^{l-1}\delta(\sigma^{-(j+1)}\omega)\int_0^1 \psi (\L_{\sigma^{-j}\omega}^\epsilon )^j (X_{\beta(\sigma^{-(j+1)}\omega)} N_{\beta(\sigma^{-(j+1)}\omega)}(\L_{\sigma^{-l}\omega}^{l-1-j}(1)))'\, dm+  \\
&\phantom{=}+\sum_{j=0}^\infty \delta(\sigma^{-(j+1)}\omega) \int_0^1 \psi \, \L_{\sigma^{-j}\omega}^j \left(X_{\beta(\sigma^{-(j+1)}\omega)} N_{\beta(\sigma^{-(j+1)}\omega)} (h(\sigma^{-(j+1)}\omega)) \right)'\, dm \bigg | \\
&\le \bigg | \sum_{j=0}^n \delta(\sigma^{-(j+1)}\omega)\int_0^1 \psi (\L_{\sigma^{-j}\omega}^\epsilon )^j (X_{\beta(\sigma^{-(j+1)}\omega)} N_{\beta(\sigma^{-(j+1)}\omega)}(\L_{\sigma^{-l}\omega}^{l-1-j}(1)))'\, dm  -\\
&\phantom{\le}-\sum_{j=0}^n \delta(\sigma^{-(j+1)}\omega)\int_0^1 \psi \, \L_{\sigma^{-j}\omega}^j \left(X_{\beta(\sigma^{-(j+1)}\omega)} N_{\beta(\sigma^{-(j+1)}\omega)} (h(\sigma^{-(j+1)}\omega)) \right)'\, dm \bigg |+\eta \\
&\le \left |\sum_{j=0}^n \delta (\sigma^{-(j+1)}\omega)\int_0^1\psi ((\L_{\sigma^{-j}\omega}^\epsilon )^j -\L_{\sigma^{-j}\omega}^j)[(X_{\beta(\sigma^{-(j+1)}\omega)} N_{\beta(\sigma^{-(j+1)}\omega)}(\L_{\sigma^{-l}\omega}^{l-1-j}(1)))']\, dm \right| \\
&\phantom{\le}+
\bigg |\sum_{j=0}^n \delta(\sigma^{-(j+1)}\omega)\int_0^1 \psi \L_{\sigma^{-j}\omega}^j((X_{\beta(\sigma^{-(j+1)}\omega)} N_{\beta(\sigma^{-(j+1)}\omega)}(\L_{\sigma^{-l}\omega}^{l-1-j}(1)))'-\\
&\phantom{\le}
-(X_{\beta(\sigma^{-(j+1)}\omega)} N_{\beta(\sigma^{-(j+1)}\omega)} (h(\sigma^{-(j+1)}\omega)))')\, dm\bigg | +\eta \\
&=:\eqref{eq:27dev}(I)+\eqref{eq:27dev}(II)+\eta.
\end{split}
\end{equation}
\paragraph{\bf{Bound on \eqref{eq:27dev}(I).}}
\[
\begin{split}
&\left |\sum_{j=0}^n \delta(\sigma^{-(j+1)}\omega) \int_0^1\psi ((\L_{\sigma^{-j}\omega}^\epsilon )^j -\L_{\sigma^{-j}\omega}^j)[(X_{\beta(\sigma^{-(j+1)}\omega)} N_{\beta(\sigma^{-(j+1)}\omega)}(\L_{\sigma^{-l}\omega}^{l-1-j}(1)))']\, dm \right| \\
&\le \| \psi \|_{L^\infty(m)}\sum_{j=1}^n \|((\L_{\sigma^{-j}\omega}^\epsilon )^j -\L_{\sigma^{-j}\omega}^j)[(X_{\beta(\sigma^{-(j+1)}\omega)} N_{\beta(\sigma^{-(j+1)}\omega)}(\L_{\sigma^{-l}\omega}^{l-1-j}(1)))']\|_{L^1(m)}. 
\end{split}
\]
By Lemma~\ref{estim}, we have that 
\[
(X_{\beta(\sigma^{-(j+1)}\omega)} N_{\beta(\sigma^{-(j+1)}\omega)}(\L_{\sigma^{-l}\omega}^{l-1-j}(1)))'= \psi_1-\psi_2,
\]
for some $\psi_i=\psi_i(j, l, \omega)\in \C_\ast(a)\cap C^1(0, 1]$, $i=1, 2$. Moreover, $\| \psi_i\|_{L^1(m)}$ is bounded from above by a constant independent on indices $j, l$ and $\omega$. Thus,
\[
\begin{split}
&((\L_{\sigma^{-j}\omega}^\epsilon )^j -\L_{\sigma^{-j}\omega}^j)[(X_{\beta(\sigma^{-(j+1)}\omega)} N_{\beta(\sigma^{-(j+1)}\omega)}(\L_{\sigma^{-l}\omega}^{l-1-j}(1)))'] \\
&=((\L_{\sigma^{-j}\omega}^\epsilon )^j -\L_{\sigma^{-j}\omega}^j) (\psi_1-\psi_2) \\
&=\sum_{r=1}^j (\L_{\sigma^{-j+r}\omega}^\epsilon)^{j-r} (\L_{\sigma^{-j+r-1}\omega}^\epsilon-\L_{\sigma^{-j+r-1}\omega})\L_{\sigma^{-j}\omega}^{r-1}(\psi_1-\psi_2) \\
&=\sum_{r=1}^j (\L_{\sigma^{-j+r}\omega}^\epsilon)^{j-r} \int_{\beta(\sigma^{-j+r-1}\omega)}^{\beta(\sigma^{-j+r-1}\omega)+\epsilon \delta(\sigma^{-j+r-1}\omega)}\partial_\gamma \L_\gamma  (\L_{\sigma^{-j}\omega}^{r-1}(\psi_1-\psi_2))\, d\gamma \\
&=\sum_{r=1}^j (\L_{\sigma^{-j+r}\omega}^\epsilon)^{j-r} \int_{\beta(\sigma^{-j+r-1}\omega)}^{\beta(\sigma^{-j+r-1}\omega)+\epsilon \delta(\sigma^{-j+r-1}\omega)}\partial_\gamma \L_\gamma  (\L_{\sigma^{-j}\omega}^{r-1}\psi_1)\, d\gamma \\
&\phantom{=}-\sum_{r=1}^j (\L_{\sigma^{-j+r}\omega}^\epsilon)^{j-r} \int_{\beta(\sigma^{-j+r-1}\omega)}^{\beta(\sigma^{-j+r-1}\omega)+\epsilon \delta(\sigma^{-j+r-1}\omega)}\partial_\gamma \L_\gamma  (\L_{\sigma^{-j}\omega}^{r-1}\psi_2)\, d\gamma. \\
\end{split}
\]
Let
\[
\bar{\psi}_i:=\L_{\sigma^{-j}\omega}^{r-1}\psi_i \in \C_\ast(a)\cap C^1(0, 1] , \quad i\in \{1, 2\}.
\]
Then, using~\cite[(2.3)-(2.4)]{BT}, Remark~\ref{n06} and noting that $\| \bar{\psi}_i\|_{L^1(m)}=\|\psi_i\|_{L^1(m)}$
we have that 
\begin{equation}\label{uwr}
\begin{split}
|\partial_\gamma \L_\gamma \bar{\psi}_i(x)| &=|(X_\gamma N_\gamma (\bar{\psi}_i))'(x)| \\
&\le |X_\gamma'(x)N_\gamma (\bar{\psi}_i)(x)| + (g_\gamma'(x))^2 |X_\gamma (x)\bar{\psi}_i'(g_\gamma(x))|+|X_\gamma (x)g_\gamma''(x) \bar{\psi}_i(g_\gamma(x))| \\
&\le cx^{\gamma-\alpha} (1-\log x),
\end{split}
\end{equation}
for $x\in (0, 1]$, where $c>0$ is a constant. Let $\delta \in (0,1)$ be as in the proof of Lemma~\ref{estim}. Then, it follows from~\eqref{uwr} that 
\[
|\partial_\gamma \L_\gamma \bar{\psi}_i(x)| \le \bar cx^{-\delta} \quad x\in (0, 1],
\]
where $\bar c>0$ is some constant. This implies that 
\[
\|\partial_\gamma \L_\gamma \bar \psi_i\|_{L^1(m)} \le \frac{\bar c}{1-\delta} \quad i\in\{1, 2\},
\]
and consequently (using that  $\L_\omega^\epsilon$ is a weak-contraction on $L^1(m)$)
\[
\left \| \sum_{r=1}^j (\L_{\sigma^{-j+r}\omega}^\epsilon)^{j-r} \int_{\beta(\sigma^{-j+r-1}\omega)}^{\beta(\sigma^{-j+r-1}\omega)+\epsilon \delta(\sigma^{-j+r-1}\omega)}\partial_\gamma \L_\gamma  (\L_{\sigma^{-j}\omega}^{r-1}\psi_i)\, d\gamma\right \|_{L^1(m)} \le n  \cdot \frac{\bar c \lvert \epsilon \rvert}{1-\delta}.
\]
We conclude that 
\[
\lim_{\epsilon \to 0}\left |\sum_{j=0}^n \delta(\sigma^{-(j+1)}\omega) \int_0^1\psi ((\L_{\sigma^{-j}\omega}^\epsilon )^j -\L_{\sigma^{-j}\omega}^j)[(X_{\beta(\sigma^{-(j+1)}\omega)} N_{\beta(\sigma^{-(j+1)}\omega)}(\L_{\sigma^{-l}\omega}^{l-1-j}(1)))']\, dm \right|=0.
\]

\paragraph{\bf{Bound on \eqref{eq:27dev}(II).}}
Observe that
\[
\begin{split}
&\bigg |\sum_{j=0}^n \delta(\sigma^{-(j+1)}\omega)\int_0^1 \psi \L_{\sigma^{-j}\omega}^j((X_{\beta(\sigma^{-(j+1)}\omega)} N_{\beta(\sigma^{-(j+1)}\omega)}(\L_{\sigma^{-l}\omega}^{l-1-j}(1)))'-\\
&-(X_{\beta(\sigma^{-(j+1)}\omega)} N_{\beta(\sigma^{-(j+1)}\omega)} (h(\sigma^{-(j+1)}\omega)))')\, dm\bigg | \\
&\le \sum_{j=0}^n \| \psi \|_{L^\infty(m)} \cdot \|\L_{\sigma^{-j}\omega}^j((X_{\beta(\sigma^{-(j+1)}\omega)} N_{\beta(\sigma^{-(j+1)}\omega)}(\L_{\sigma^{-l}\omega}^{l-1-j}(1)))'-\\
&\phantom{\le}
-(X_{\beta(\sigma^{-(j+1)}\omega)} N_{\beta(\sigma^{-(j+1)}\omega)} (h(\sigma^{-(j+1)}\omega)))')\|_{L^1(m)} \\
&\le \sum_{j=0}^n \| \psi\|_{L^\infty(m)} \cdot \| (X_{\beta(\sigma^{-(j+1)}\omega)} N_{\beta(\sigma^{-(j+1)}\omega)}(\L_{\sigma^{-l}\omega}^{l-1-j}(1)))'-\\
&\phantom{\le}-(X_{\beta(\sigma^{-(j+1)}\omega)} N_{\beta(\sigma^{-(j+1)}\omega)} (h(\sigma^{-(j+1)}\omega)))'\|_{L^1(m)}.
\end{split}
\]
Let us fix $0\leq j\leq n$. In what follows, we will show that the $j^{th}$ term above converges to $0$ as $l\to\infty$. To start, note that
\begin{equation}\label{eq:XNdiff}
\begin{split}
&\| (X_{\beta(\sigma^{-(j+1)}\omega)} N_{\beta(\sigma^{-(j+1)}\omega)}(\L_{\sigma^{-l}\omega}^{l-1-j}(1)))'-(X_{\beta(\sigma^{-(j+1)}\omega)} N_{\beta(\sigma^{-(j+1)}\omega)} (h(\sigma^{-(j+1)}\omega)))'\|_{L^1(m)} \\
&\le \|X_{\beta(\sigma^{-(j+1)}\omega)}' (N_{\beta(\sigma^{-(j+1)}\omega)}(\L_{\sigma^{-l}\omega}^{l-1-j}(1))-N_{\beta(\sigma^{-(j+1)}\omega)} (h(\sigma^{-(j+1)}\omega)))\|_{L^1(m)} \\
&\phantom{\le}+\|X_{\beta(\sigma^{-(j+1)}\omega)} (  (N_{\beta(\sigma^{-(j+1)}\omega)}(\L_{\sigma^{-l}\omega}^{l-1-j}(1)))'-(N_{\beta(\sigma^{-(j+1)}\omega)} (h(\sigma^{-(j+1)}\omega)))')\|_{L^1(m)}.
\end{split}
\end{equation}
Let $c>0$ be such that $\|X_\gamma'\|_\infty \le c$ for each $\gamma$. That such a bound exists follows from a direct calculation, e.g. using \eqref{eq:trivialboundong}. Then,  using that $N_\gamma$ is a contraction on $L^1(m)$ and~\eqref{dec}, it follows that the first term of
\eqref{eq:XNdiff} is bounded as follows,
\begin{equation}\label{ccalpha}
\begin{split}
&\|X_{\beta(\sigma^{-(j+1)}\omega)}' (N_{\beta(\sigma^{-(j+1)}\omega)}(\L_{\sigma^{-l}\omega}^{l-1-j}(1))-N_{\beta(\sigma^{-(j+1)}\omega)} (h(\sigma^{-(j+1)}\omega)))\|_{L^1(m)} \\
&\le c\|N_{\beta(\sigma^{-(j+1)}\omega)}(\L_{\sigma^{-l}\omega}^{l-1-j}(1))-N_{\beta(\sigma^{-(j+1)}\omega)} (h(\sigma^{-(j+1)}\omega))\|_{L^1(m)} \\
&\le c \|\L_{\sigma^{-l}\omega}^{l-1-j}(1)-h(\sigma^{-(j+1)}\omega)\|_{L^1(m)} \\
&=c\|\L_{\sigma^{-l}\omega}^{l-1-j}(1)-\L_{\sigma^{-l}\omega}^{l-1-j}(h(\sigma^{-l}\omega))\|_{L^1(m)} \\
&\le 2cC_\alpha (l-1-j)^{-1/\alpha+1} \\
&\le 2cC_\alpha (l-1-n)^{-1/\alpha+1}.
\end{split}
\end{equation}
Observe that the last term can be arbitrarily small by choosing $l$ sufficiently large. Thus, it remains to deal with the last term in \eqref{eq:XNdiff}. Before doing so, we introduce some notation. 
For $\omega\in \Omega'$ and $p\in \N_0$, set
\begin{equation}\label{b_pt}
b_p^\omega:=(T_{\omega}^p)^{-1}(1/2),
\end{equation}
where $T_{\omega}^p :=T_{\beta(\sigma^{p-1}\omega)}\circ \ldots \circ T_{\beta(\omega)}$ and the pre-images are taken with respect to restrictions $T_{\omega}^p \rvert_{[0, 1/2]}$. Then, $b_0^\omega=1/2$ and $b_{p+1}^\omega<b_p^\omega$. For $x\in (b_{p+1}^\omega, b_p^\omega]$, we have $T_{\omega}^{p+1}(x)\in (\frac 1 2, 1]$ and $T_{\omega}^j(x) \le \frac 1 2$ for $j\le p$. Moreover, 
\[
T_{\beta(\omega)}^{-1}(b_{p}^{\sigma \omega})=b_{p+1}^\omega.
\]
We will make use of the following lemma, whose proof is given in Subsection~\ref{controlbpt}.
\begin{lemma}\label{bpt}
There exist $C_\alpha>0$ depending only on $\alpha$ and $C_{\underline{\alpha}}>0$ depending only on $\underline {\alpha}$ such that 
\begin{equation}\label{first}
b_p^\omega-b_{p+1}^\omega \le b_{p+1}^\omega \le C_\alpha p^{-1/\alpha}
\end{equation}
and 
\begin{equation}\label{second}
b_p^\omega \ge C_{\underline \alpha} p^{-1/\underline{\alpha}},
\end{equation}
for $\omega \in \Omega'$ and $p\in \mathbb N$.
\end{lemma}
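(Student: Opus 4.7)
The plan is to prove the two decay estimates in \eqref{first}--\eqref{second} by comparing the nonautonomous inverse-branch cocycle
$G_p^\omega := g_{\beta(\omega)} \circ g_{\beta(\sigma\omega)} \circ \cdots \circ g_{\beta(\sigma^{p-1}\omega)}$,
which satisfies $b_p^\omega = G_p^\omega(1/2)$, with the autonomous LSV inverse branches at the extremal parameters $\alpha$ and $\underline\alpha$. The ratio bound $b_p^\omega - b_{p+1}^\omega \le b_{p+1}^\omega$ will then follow from concavity of $G_p^\omega$.

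For the comparison step, note that for $\gamma \in [\underline\alpha, \alpha]$ and $y \in (0, 1/2)$, since $2y < 1$ the quantity $(2y)^\gamma$ is decreasing in $\gamma$, so $T_\alpha(y) \le T_\gamma(y) \le T_{\underline\alpha}(y)$; inverting on the left branches gives $g_\alpha(y) \ge g_\gamma(y) \ge g_{\underline\alpha}(y)$. Composing $p$ such inequalities and evaluating at $1/2$ yields
\[
\tilde b_p^{\underline\alpha} \;\le\; b_p^\omega \;\le\; \tilde b_p^\alpha,
\]
where $\tilde b_p^\eta$ denotes the $p$-th preimage of $1/2$ under the autonomous left branch of $T_\eta$. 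The classical LSV estimate $\tilde b_p^\eta \asymp p^{-1/\eta}$, with constants depending only on $\eta$, is then standard: setting $u_p := (\tilde b_p^\eta)^{-\eta}$ and using $\tilde b_p^\eta = \tilde b_{p+1}^\eta(1 + (2\tilde b_{p+1}^\eta)^\eta)$, Taylor expansion yields $u_{p+1} - u_p = \eta 2^\eta + O((\tilde b_{p+1}^\eta)^\eta)$, hence $u_p \sim \eta 2^\eta p$. Combined with monotonicity $b_{p+1}^\omega \le b_p^\omega$, this provides both \eqref{second} and the upper bound in \eqref{first}.

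For the inequality $b_p^\omega - b_{p+1}^\omega \le b_{p+1}^\omega$, the key observation is that each $f_\gamma(x) = x + 2^\gamma x^{1+\gamma}$ is convex, increasing, and satisfies $f_\gamma(0) = 0$, so its inverse $g_\gamma$ is concave, increasing, and vanishes at $0$; these properties are preserved by composition, so $G_p^\omega$ is concave and increasing with $G_p^\omega(0) = 0$. It follows that $x \mapsto G_p^\omega(x)/x$ is non-increasing on $(0, 1]$. Writing $b_p^\omega = G_p^\omega(1/2)$ and $b_{p+1}^\omega = G_p^\omega(b_1^{\sigma^p\omega})$ with $b_1^{\sigma^p\omega} = g_{\beta(\sigma^p\omega)}(1/2) \in (0, 1/2)$, this yields
\[
\frac{b_p^\omega}{1/2} \;\le\; \frac{b_{p+1}^\omega}{b_1^{\sigma^p\omega}}, \qquad \text{and hence} \qquad b_p^\omega \;\le\; \frac{b_{p+1}^\omega}{2\, b_1^{\sigma^p\omega}}.
\]
A direct check gives $b_1^{\sigma^p\omega} \ge 1/4$ uniformly in $\omega$ and $p$, since $f_\gamma(1/4) = \tfrac14\bigl(1 + (1/2)^\gamma\bigr) \le 1/2$ for every $\gamma \in [0, 1]$. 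Consequently $b_p^\omega \le 2\, b_{p+1}^\omega$, which is the claimed inequality.

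The main technical point is the concavity observation together with the uniform lower bound $b_1^{\sigma^p\omega} \ge 1/4$: a more naive orbit comparison using only $b_p^{\sigma\omega} - b_{p+1}^\omega = 2^{\beta(\omega)}(b_{p+1}^\omega)^{1+\beta(\omega)}$ relates $b_{p+1}^\omega$ to the shifted quantity $b_p^{\sigma\omega}$ rather than $b_p^\omega$, and by itself does not produce the required constant $2$.
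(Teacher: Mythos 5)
Your proof is correct, and it is genuinely more self-contained than the paper's. The paper obtains the whole of \eqref{first} (both the ratio bound $b_p^\omega-b_{p+1}^\omega\le b_{p+1}^\omega$ and the decay rate) by citing \cite[Proposition~3.13]{KL}, and obtains \eqref{second} via the same monotonicity-in-$\gamma$ comparison you use, reducing to the autonomous case and then citing \cite[Lemma~2.2]{L}. You instead derive the two-sided decay estimates directly from the sandwich $\tilde b_p^{\underline\alpha}\le b_p^\omega\le\tilde b_p^\alpha$ together with the classical autonomous asymptotics (which you rederive), and you prove the ratio bound by an elementary argument: since each inverse branch $g_\gamma$ is concave, increasing and vanishes at $0$, so is the composed inverse cocycle $G_p^\omega$, whence $G_p^\omega(x)/x$ is non-increasing; combining this with the pointwise bound $b_1^{\sigma^p\omega}=g_{\beta(\sigma^p\omega)}(1/2)\ge 1/4$ (which requires the sharper computation $f_\gamma(1/4)\le 1/2$, not the cruder $g_\gamma(x)\ge x/3$ from \eqref{eq:trivialboundong}) yields $b_p^\omega\le 2b_{p+1}^\omega$ with the required uniform constant. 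This concavity observation replaces the appeal to \cite{KL} for the ratio inequality, buys a fully elementary proof of that step, and, as you note, sidesteps the defect of the naive recursion $b_p^{\sigma\omega}-b_{p+1}^\omega=2^{\beta(\omega)}(b_{p+1}^\omega)^{1+\beta(\omega)}$, which relates $b_{p+1}^\omega$ to the shifted $b_p^{\sigma\omega}$ rather than to $b_p^\omega$. All steps check out, including the fact that the comparison of compositions of increasing maps preserves the pointwise ordering and that concavity passes through composition of nonnegative concave increasing maps.
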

Going back to the last term in \eqref{eq:XNdiff}, we have  
\begin{equation}\label{eq:XNLdiff}
\begin{split}
&\|X_{\beta(\sigma^{-(j+1)}\omega)} (  (N_{\beta(\sigma^{-(j+1)}\omega)}(\L_{\sigma^{-l}\omega}^{l-1-j}(1)))'-(N_{\beta(\sigma^{-(j+1)}\omega)} (h(\sigma^{-(j+1)}\omega)))')\|_{L^1(m)}  \\
&=\int_0^{b_{p}^{\sigma^{-j}\omega}}|X_{\beta(\sigma^{-(j+1)}\omega)} (  (N_{\beta(\sigma^{-(j+1)}\omega)}(\L_{\sigma^{-l}\omega}^{l-1-j}(1)))'-(N_{\beta(\sigma^{-(j+1)}\omega)} (h(\sigma^{-(j+1)}\omega)))')| \, dm \\
&\phantom{\le}+\int_{b_{p}^{\sigma^{-j}\omega}}^1 |X_{\beta(\sigma^{-(j+1)}\omega)} (  (N_{\beta(\sigma^{-(j+1)}\omega)}(\L_{\sigma^{-l}\omega}^{l-1-j}(1)))'-(N_{\beta(\sigma^{-(j+1)}\omega)} (h(\sigma^{-(j+1)}\omega)))')| \, dm\\
&=: \eqref{eq:XNLdiff}(I)+\eqref{eq:XNLdiff}(II).
\end{split}
\end{equation}
\paragraph{\textbf{Bound on \eqref{eq:XNLdiff}(I).}}
 In the sequel, $c>0$ will denote a generic constant independent of $\omega, j$ and $l$ that can change its value from one occurrence to the next. Let $\phi \in \C_*(a)\cap \C_2$ with $\int_0^1 \phi \, dm=1$.   By Proposition~\ref{prop:conecontraction}, Remark~\ref{n06} and~\eqref{eq:trivialboundong} we have that 
\[
\begin{split}
|(N_{\beta(\sigma^{-(j+1)}\omega)}(\phi))'(x)| &\le \frac{b_1}{x}N_{\beta(\sigma^{-(j+1)}\omega)}(\phi) (x)\\
&=\frac{b_1}{x}g_{\beta(\sigma^{-(j+1)}\omega)}'(x)\phi(g_{\beta(\sigma^{-(j+1)}\omega)}(x))\\
&\le \frac{ab_1}{x} (g_{\beta(\sigma^{-(j+1)}\omega)}(x))^{-\alpha} \\
&\le 3ab_1 x^{-\alpha-1},
\end{split}
\]
for $x\in (0, 1]$. Hence, since $\L_{\sigma^{-l}\omega}^{l-1-j}(1), h(\sigma^{-(j+1)}\omega)\in \C_*(a)\cap \C_2$ satisfy $\int_0^1 \L_{\sigma^{-l}\omega}^{l-1-j}(1)\, dm=\int_0^1 h(\sigma^{-(j+1)}\omega)\, dm=1$, 
we have 
\[
|(N_{\beta(\sigma^{-(j+1)}\omega)}(\L_{\sigma^{-l}\omega}^{l-1-j}(1)))'(x)|\le cx^{-\alpha-1} \quad \text{and} \quad |(N_{\beta(\sigma^{-(j+1)}\omega)} (h(\sigma^{-(j+1)}\omega)))'(x)| \le cx^{-\alpha-1},
\]
for $x\in (0, 1]$.

We now have (using also~\cite[(2.3)]{BT}) that 
\[
\begin{split}
&|X_{\beta(\sigma^{-(j+1)}\omega)} (x)(  (N_{\beta(\sigma^{-(j+1)}\omega)}(\L_{\sigma^{-l}\omega}^{l-1-j}(1)))'-(N_{\beta(\sigma^{-(j+1)}\omega)} (h(\sigma^{-(j+1)}\omega)))')(x)|\\
&\le cx^{\beta(\sigma^{-(j+1)}\omega)-\alpha}(1-\log x), 
\end{split}
\]
for $x\in (0, 1]$. Using the arguments as in the proof of Lemma~\ref{estim}, we conclude that there exists $\delta \in (0, \alpha] $ such that
\[
|X_{\beta(\sigma^{-(j+1)}\omega)} (x)(  (N_{\beta(\sigma^{-(j+1)}\omega)}(\L_{\sigma^{-l}\omega}^{l-1-j}(1)))'-(N_{\beta(\sigma^{-(j+1)}\omega)} (h(\sigma^{-(j+1)}\omega)))')(x)| \le cx^{-\delta},
\]
for $x\in (0, 1]$. This implies that 
\begin{equation}\label{eq:int0bpkj}
\int_0^{b_{p}^{\sigma^{-j}\omega}}|X_{\beta(\sigma^{-(j+1)}\omega)} (  (N_{\beta(\sigma^{-(j+1)}\omega)}(\L_{\sigma^{-l}\omega}^{l-1-j}(1)))'-(N_{\beta(\sigma^{-(j+1)}\omega)} (h(\sigma^{-(j+1)}\omega)))')| \, dm
\end{equation}
can be made arbitrarily small by taking $p$ sufficiently large, as $\lim\limits_{p\to \infty}b_p^{\sigma^{-j}\omega}=0$.

\paragraph{\textbf{Bound on \eqref{eq:XNLdiff}(II).}}
To analyze this term, we begin by observing that since 
\[
(N_\gamma \phi)'=g_\gamma''\phi \circ g_\gamma+(g_\gamma')^2\phi'\circ g_\gamma=\frac{g_\gamma''}{g_\gamma'}N_\gamma \phi +g_\gamma' N_\gamma \phi',
\]
we have that 
\[
\begin{split}
& \int_{b_{p}^{\sigma^{-j}\omega}}^1 |X_{\beta(\sigma^{-(j+1)}\omega)} (  (N_{\beta(\sigma^{-(j+1)}\omega)}(\L_{\sigma^{-l}\omega}^{l-1-j}(1)))'-(N_{\beta(\sigma^{-(j+1)}\omega)} (h(\sigma^{-(j+1)}\omega)))')| \, dm \\
& \le \int_{b_{p}^{\sigma^{-j}\omega}}^1 \left  |X_{\beta(\sigma^{-(j+1)}\omega)}\frac{g_{\beta(\sigma^{-(j+1)}\omega)}''}{g_{\beta(\sigma^{-(j+1)}\omega)}'} \right |
\cdot |N_{\beta(\sigma^{-(j+1)}\omega)}(\L_{\sigma^{-l}\omega}^{l-1-j}(1)-h(\sigma^{-(j+1)}\omega))|\, dm \\
&\phantom{\le}+\int_{b_{p}^{\sigma^{-j}\omega}}^1
|X_{\beta(\sigma^{-(j+1)}\omega)}g_{\beta(\sigma^{-(j+1)}\omega)}'| \cdot |N_{\beta(\sigma^{-(j+1)}\omega)} ((\L_{\sigma^{-l}\omega}^{l-1-j}(1))'-(h(\sigma^{-(j+1)}\omega))')| \, dm \\
&\le c\int_0^1 |N_{\beta(\sigma^{-(j+1)}\omega)}(\L_{\sigma^{-l}\omega}^{l-1-j}(1)-h(\sigma^{-(j+1)}\omega))|\, dm \\
&\phantom{\le}+c\int_{b_{p}^{\sigma^{-j}\omega}}^1 g_{\beta(\sigma^{-(j+1)}\omega)}'|N_{\beta(\sigma^{-(j+1)}\omega)} ((\L_{\sigma^{-l}\omega}^{l-1-j}(1))'-(h(\sigma^{-(j+1)}\omega))')| \, dm \\
&\le c\|\L_{\sigma^{-l}\omega}^{l-1-j}1-h(\sigma^{-(j+1)}\omega)\|_{L^1(m)}+c\int_{b_{p+1}^{\sigma^{-(j+1)}\omega}}^{1/2}|(\L_{\sigma^{-l}\omega}^{l-1-j} (1))'-(h(\sigma^{-(j+1)}\omega))'| \, dm,
\end{split}
\]
where we have used the change of variables and the fact that $N_\beta$ is a weak contraction on $L^1(m)$.  Note that~\eqref{dec} implies that 
\[
\|\L_{\sigma^{-l}\omega}^{l-1-j}1-h(\sigma^{-(j+1)}\omega)\|_{L^1(m)} \le 2C_\alpha (l-1-n)^{-1/\alpha+1},
\]
which can be made arbitrarily small by choosing $l$ sufficiently large. 
Next,
\[
\begin{split}
&\int_{b_{p+1}^{\sigma^{-(j+1)}\omega}}^{1/2}|(\L_{\sigma^{-l}\omega}^{l-1-j} (1))'-(h(\sigma^{-(j+1)}\omega))'| \, dm\\
&=\int_{b_{p+1}^{\sigma^{-(j+1)}\omega}}^{1/2}|( \L_{\sigma^{-l}\omega}^{l-1-j}(1-h(\sigma^{-l}\omega)))'| \, dm \\
&=
\int_{b_{p+1}^{\sigma^{-(j+1)}\omega}}^{1/2}|(\L^{\ell}_{\sigma^{-(j+\ell+1)}\omega}(\L_{\sigma^{-l}\omega}^{l-1-j-\ell}1-h(\sigma^{-(j+\ell+1)}\omega)))'| \, dm,
\end{split}
\]
where $\ell=\lfloor \frac{l-1-j}{2}\rfloor$.
For subsequent arguments, we note that since $j\le n$ is fixed, we have $\ell \to \infty$ as $l\to \infty$.
By writing $\varphi:=\L_{\sigma^{-l}\omega}^{l-1-j-\ell}1$ we have that 
\[
\begin{split}
&( \L_{\sigma^{-(j+\ell+1)}\omega}^{\ell}(\varphi-h(\sigma^{-(j+\ell+1)}\omega)))'  \\
&=
\L_{\sigma^{-(j+\ell+1)}\omega}^{\ell} \left (\frac{1}{(T_{\sigma^{-(j+\ell+1)}\omega}^{\ell})'}(\varphi-h(\sigma^{-(j+\ell+1)}\omega))'+\frac{(T_{\sigma^{-(j+\ell+1)}\omega}^{\ell})''}{((T_{\sigma^{-(j+\ell+1)}\omega}^{\ell})')^2}(\varphi-h(\sigma^{-(j+\ell+1)}\omega)) \right ).
\end{split}
\]
Then,
\begin{equation}\label{eq:almostlast2}
\begin{split}
&\int_{b_{p+1}^{\sigma^{-(j+1)}\omega}}^{1/2}|(\L_{\sigma^{-l}\omega}^{l-1-j} (1))'-(h(\sigma^{-(j+1)}\omega))'| \, dm \\
&\leq
\int_{b_{p+1}^{\sigma^{-(j+1)}\omega}}^{1/2} \left |\L_{\sigma^{-(j+\ell+1)}\omega}^{\ell} \left (\frac{1}{(T_{\sigma^{-(j+\ell+1)}\omega}^{\ell})'}(\varphi-h(\sigma^{-(j+\ell+1)}\omega))' \right ) \right | \, dm \\
& \phantom{\leq}+
\int_{b_{p+1}^{\sigma^{-(j+1)}\omega}}^{1/2} \left |\L_{\sigma^{-(j+\ell+1)}\omega}^{\ell} \left (\frac{(T_{\sigma^{-(j+\ell+1)}\omega}^{\ell})''}{((T_{\sigma^{-(j+\ell+1)}\omega}^{\ell})')^2}(\varphi-h(\sigma^{-(j+\ell+1)}\omega)) \right )\right |\, dm \\
&=: \eqref{eq:almostlast2}(I)+\eqref{eq:almostlast2}(II). 
\end{split}
\end{equation}

\paragraph{\textbf{Bound on \eqref{eq:almostlast2}(I).}} 
We will need the following lemma. Its proof will be deferred until Subsection~\ref{Lem19}.
\begin{lemma}\label{newlem1}
Let $p\ge 0$. Then, 
there exists a constant $C_{p, \alpha, \underline{\alpha}} > 0$ such that for $\omega \in \Omega'$, $\ell \ge 1$, and for $0 \le k \le p$, 
$$
\frac{1}{  (T_\omega^\ell )' (x)  } \le C_{p, \alpha, \underline{\alpha}} b^\omega_{ \ell + k  }  \le C_{p, \alpha, \underline{\alpha}}(\ell+k)^{-1/\alpha},
$$
for all but finitely many $x$ with $T_\omega^\ell(x) \in (b_{k}^{\sigma^\ell \omega} ,  b_{k-1}^{\sigma^\ell \omega } )$,
where we set $b_{-1}^{\omega} := 1$.
\end{lemma}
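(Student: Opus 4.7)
Set $L := \ell + k$. The plan is to first reduce to estimating $1/(T_\omega^L)'(x)$ for $x$ whose $L$-th iterate lies in $(1/2,1)$, and then to bound the sizes of the monotonicity branches of $T_\omega^L$ via an itinerary decomposition combined with bounded distortion.

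For the reduction, note that $y := T_\omega^\ell(x) \in (b_k^{\sigma^\ell\omega}, b_{k-1}^{\sigma^\ell\omega})$ forces $y$ through $k$ first-branch iterates before landing in $(1/2,1)$, so $T_\omega^L(x) \in (1/2,1)$. Because $k \le p$ is bounded and the interval $(b_k^{\sigma^\ell\omega}, b_{k-1}^{\sigma^\ell\omega})$ has length bounded below by a positive constant depending only on $p$ and $\underline{\alpha}$ (by the lower bound in Lemma~\ref{bpt}), standard bounded distortion for first-branch LSV iterates yields $(T_{\sigma^\ell\omega}^k)'(y) \le C_{p,\underline{\alpha}}$ uniformly. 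The chain rule then gives $1/(T_\omega^\ell)'(x) \le C_{p,\underline{\alpha}}/(T_\omega^L)'(x)$, reducing the problem to showing $1/(T_\omega^L)'(x) \le C\, b_L^\omega$ for any $x$ with $T_\omega^L(x) \in (1/2,1)$ at which $T_\omega^L$ is smooth (the excluded ``finitely many $x$'' being the endpoints of the branches of $T_\omega^\ell$).

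For this refined bound, decompose $T_\omega^{-L}((1/2,1))$ into its monotonicity branches $\{I_\nu\}$, each mapped diffeomorphically onto $(1/2,1)$ by $T_\omega^L$. Bounded distortion for sequential LSV compositions (derived inductively from the per-step distortion, in the spirit of the cone framework in Proposition~\ref{prop:conecontraction}) gives $1/(T_\omega^L)'(x) \le 2K|I_\nu|$ for $x \in I_\nu$, so it suffices to establish $|I_\nu| \le C\, b_L^\omega$. Each $I_\nu$ is parametrized by its itinerary $(s_0,\ldots,s_{L-1}) \in \{0,1\}^L$ and is built from $(1/2,1)$ by composing the corresponding inverse branches: the first-branch inverses $g_{\beta(\sigma^j\omega)}$ (with $g_\gamma'\le 1$ from~\eqref{eq:trivialboundong}) and the affine second-branch inverse $y\mapsto(y+1)/2$ (derivative $1/2$). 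For the ``all first branches'' itinerary, direct computation gives $I_\nu=(b_L^\omega,b_{L-1}^\omega)$, and $|I_\nu| \le b_L^\omega$ follows from Lemma~\ref{bpt}. For general itineraries, I would group consecutive first-branch steps into sub-excursions of lengths $n_i$ separated by $N_2$ second-branch steps, with $\sum_i n_i + N_2 = L$; bounded distortion applied to each sub-excursion (which begins in $(b_{n_i}^{\omega_i}, b_{n_i-1}^{\omega_i})$ for the appropriate shift $\omega_i$) then yields $|I_\nu| \lesssim 2^{-N_2}\prod_i b_{n_i}^{\omega_i}$.

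The final task is to verify the sub-multiplicative comparison $2^{-N_2}\prod_i b_{n_i}^{\omega_i} \lesssim b_L^\omega$ uniformly over all admissible itineraries. I would handle this using the identity $b_L^\omega = g_{\beta(\omega)}\circ \cdots \circ g_{\beta(\sigma^{L-1}\omega)}(1/2)$ together with a comparison of the form $g_\gamma^s(y) \gtrsim \min(y, b_s^{\omega'})$ on $(0,1/2]$ (a consequence of $g_\gamma(y) \ge y/3$ from~\eqref{eq:trivialboundong} and the sandwich bounds in Lemma~\ref{bpt}), which allows one to express $b_L^\omega$ as a product of $b_{n_i}^{\omega_i}$-factors together with $1/2$-factors coming from the intermediate second-branch steps. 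Once $|I_\nu| \le C b_L^\omega$ is established, the bound $1/(T_\omega^\ell)'(x) \le C_{p,\alpha,\underline{\alpha}}\, b_{\ell+k}^\omega$ follows, and the second inequality in the lemma is immediate from Lemma~\ref{bpt}. The main technical obstacle is precisely this uniform sub-multiplicative comparison, which must be robust enough to absorb the gap between the upper ($p^{-1/\alpha}$) and lower ($p^{-1/\underline{\alpha}}$) bounds in Lemma~\ref{bpt} when different $\omega$-shifts are involved in different sub-excursions.
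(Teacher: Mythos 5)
Your strategy up to the last step matches the paper's: set $L = \ell + k$, reduce via the chain rule to bounding $1/(T_\omega^L)'(x)$ (the factor $(T_{\sigma^\ell\omega}^k)'(y)$ is trivially bounded by $3^k \le 3^p$ since $T_\gamma' \le 3$ everywhere; the distortion argument you sketch for this is unnecessary, and the ``lower bound in Lemma~\ref{bpt}'' is a lower bound on $b_p^\omega$, not on gap lengths $b_{k-1}^\omega - b_k^\omega$), and then use a distortion estimate to compare $1/(T_\omega^L)'(x)$ to the length of the branch interval of $T_\omega^L$ mapping onto $(1/2,1)$ and containing $x$. The genuine gap is in the final comparison of that branch length to $b_L^\omega$.

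The paper settles this by importing the precise structural fact \eqref{eq:compare_intervals} (Eq.~(4) of \cite{leppanen2017functional}): among all monotonicity branches $I_\omega^{(n)}(\theta)$ of $T_\omega^n$, the leftmost $I_\omega^{(n)}(1) = (0, b_{n-1}^\omega)$ is the largest, so $|I_\nu| \le b_L^\omega$ comes for free. Your itinerary decomposition tries to re-derive a version of this by bounding $|I_\nu| \lesssim 2^{-N_2}\prod_i b_{n_i}^{\omega_i}$ and then arguing the product is $\lesssim b_L^\omega$. That last comparison cannot be closed using only the power-law envelopes of Lemma~\ref{bpt}. Take two sub-excursions of lengths $n_1 = n_2 \approx L/2$ separated by $N_2 = 1$ fast step: the upper bound $b_{n_i}^{\omega_i} \le C_\alpha n_i^{-1/\alpha}$ gives $\prod_i b_{n_i}^{\omega_i} \lesssim (L/2)^{-2/\alpha}$, while the only available lower bound on the right side is $b_L^\omega \ge C_{\underline{\alpha}} L^{-1/\underline{\alpha}}$; requiring $(L/2)^{-2/\alpha} \lesssim L^{-1/\underline{\alpha}}$ forces $\alpha \le 2\underline{\alpha}$, which is not assumed. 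Your closing sentence flags exactly this, and the obstruction is not a fixable technicality within your framework: the quantities $b_{n_i}^{\omega_i}$ and $b_L^\omega$ along one orbit are built from the same sequence $\beta(\sigma^j\omega)$ and are strongly correlated, whereas the decoupled envelopes in Lemma~\ref{bpt} discard precisely that correlation. You need the monotonicity of branch sizes \eqref{eq:compare_intervals} (or an equivalent structural statement) as an input, as the paper does.
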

Using Lemma~\ref{newlem1} we have 
\[
\begin{split}
&\int_{b_{p+1}^{\sigma^{-(j+1)}\omega}}^{1/2} \left |\L_{\sigma^{-(j+\ell+1)}\omega}^{\ell} \left (\frac{1}{(T_{\sigma^{-(j+\ell+1)}\omega}^{\ell})'}(\varphi-h(\sigma^{-(j+\ell+1)}\omega))' \right ) \right | \, dm \\
&\le \int_{b_{p+1}^{\sigma^{-(j+1)}\omega}}^{1/2}
\L_{\sigma^{-(j+\ell+1)}\omega}^{\ell}\left (\frac{|(\varphi-h(\sigma^{-(j+\ell+1)}\omega))'|}{(T_{\sigma^{-(j+\ell+1)}\omega}^{\ell})'} \right ) \, dm \\
&=\int_{(T_{\sigma^{-(j+\ell+1)}\omega}^\ell)^{-1}([b_{p+1}^{\sigma^{-(j+1)}\omega}, \frac 1 2])}\frac{|(\varphi-h(\sigma^{-(j+\ell+1)}\omega))'|}{(T_{\sigma^{-(j+\ell+1)}\omega}^{\ell})'}\, dm \\
&=\sum_{q=1}^{p+1}\int_{(T_{\sigma^{-(j+\ell+1)}\omega}^\ell)^{-1}([b_{q}^{\sigma^{-(j+1)}\omega},  b_{q-1}^{\sigma^{-(j+1)}\omega}])}\frac{|(\varphi-h(\sigma^{-(j+\ell+1)}\omega))'|}{(T_{\sigma^{-(j+\ell+1)}\omega}^{\ell})'}\, dm \\
&\le C_{p, \alpha, \underline{\alpha}}\sum_{q=1}^{p+1}b_{\ell+q}^{\sigma^{-(j+\ell+1)}\omega}\int_{(T_{\sigma^{-(j+\ell+1)}\omega}^\ell)^{-1}([b_{q}^{\sigma^{-(j+1)}\omega},  b_{q-1}^{\sigma^{-(j+1)}\omega}])}|(\varphi-h(\sigma^{-(j+\ell+1)}\omega))'|\, dm \\
&\le C_{p, \alpha, \underline{\alpha}}\sum_{q=1}^{p+1}b_{\ell+q}^{\sigma^{-(j+\ell+1)}\omega} \int_{b_{\ell+q}^{\sigma^{-(j+\ell+1)}\omega}}^1|(\varphi-h(\sigma^{-(j+\ell+1)}\omega))'|\, dm \\
&\le C_{p, \alpha, \underline{\alpha} }\sum_{q=1}^{p+1}b_{\ell+q}^{\sigma^{-(j+\ell+1)}\omega} \int_{b_{\ell+q}^{\sigma^{-(j+\ell+1)}\omega}}^1 x^{-1-\alpha}\, dx \\
&\le C_{p, \alpha, \underline{\alpha}}\sum_{q=1}^{p+1}(b_{\ell+q}^{\sigma^{-(j+\ell+1)}\omega})^{1-\alpha} \\
&\le C_{p, \alpha, \underline{\alpha}}(b_{\ell}^{\sigma^{-(j+\ell+1)}\omega})^{1-\alpha},
\end{split}
\]
which goes to $0$ as $\ell \to \infty$.

\paragraph{\textbf{Bound on \eqref{eq:almostlast2}(II).}}
We will use the following lemma whose proof is included in Subsection~\ref{Lem20}.
\begin{lemma}\label{lem:dist-2} There exists a constant $C_{p, \alpha, \underline{\alpha}} > 0$ such that for $\omega \in \Omega'$, $\ell \ge 1$, and for 
$1 \le k \le p$, 
\begin{align}
	\frac{(T_\omega^\ell)''(x)}{ (  ( T^\ell_\omega  )'(x) )^2 } \le C_{p, \alpha, \underline{\alpha}}  \ell^{ 1 - \underline{\alpha} / \alpha }
\end{align}
holds
whenever $T_\omega^\ell(x) \in (b_{k}^{\sigma^\ell \omega} ,  b_{k-1}^{\sigma^\ell \omega} )$, except possibly for a finite 
number of points $x$.
\end{lemma}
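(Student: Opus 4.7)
The plan is to apply a Fa\`a di Bruno-type identity for $(T_\omega^\ell)''/((T_\omega^\ell)')^2$ and then to bound each summand using Lemmas~\ref{newlem1} and~\ref{bpt}. Writing $\gamma_j := \beta(\sigma^j\omega)$, logarithmic differentiation of $(T_\omega^\ell)'=\prod_{j=0}^{\ell-1} T'_{\gamma_j}\circ T_\omega^j$ gives
\begin{equation*}
\frac{(T_\omega^\ell)''(x)}{((T_\omega^\ell)'(x))^2}=\sum_{i=1}^{\ell}\frac{T''_{\gamma_{i-1}}(T_\omega^{i-1}(x))}{(T'_{\gamma_{i-1}}(T_\omega^{i-1}(x)))^2}\cdot\frac{1}{(T_{\sigma^i\omega}^{\ell-i})'(T_\omega^i(x))},
\end{equation*}
with the convention $(T^0)'\equiv 1$. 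The finitely many excluded $x$ are exactly those at which some iterate $T_\omega^{i-1}(x)$ hits $1/2$, where $T_{\gamma_{i-1}}$ fails to be differentiable.

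Next I localize the orbit. Since $T_\omega^\ell(x)\in(b_k^{\sigma^\ell\omega},b_{k-1}^{\sigma^\ell\omega})$ with $1\le k\le p$, iterating the backward identity $T_{\beta(\omega)}^{-1}(b_q^{\sigma\omega})=b_{q+1}^\omega$ recorded just before the statement yields $T_\omega^{i-1}(x)\in(b_{\ell+k-i+1}^{\sigma^{i-1}\omega},b_{\ell+k-i}^{\sigma^{i-1}\omega})\subset[0,1/2]$ for every $1\le i\le \ell$. On this lower branch the explicit formulae give $T''_\gamma(y)/(T'_\gamma(y))^2\le C_\alpha y^{\gamma-1}$ uniformly in $\gamma\in[\underline{\alpha},\alpha]$. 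Combining with the lower bound $T_\omega^{i-1}(x)\ge b_{\ell+k-i+1}^{\sigma^{i-1}\omega}\ge C_{\underline{\alpha}}(\ell+k-i+1)^{-1/\underline{\alpha}}$ from Lemma~\ref{bpt} controls the first factor of the $i$-th summand by $C(\ell+k-i+1)^{(1-\gamma_{i-1})/\underline{\alpha}}$. Lemma~\ref{newlem1} applied to the shifted cocycle based at $\sigma^i\omega$, whose $(\ell-i)$-th iterate sends $T_\omega^i(x)$ to $T_\omega^\ell(x)\in(b_k^{\sigma^\ell\omega},b_{k-1}^{\sigma^\ell\omega})$, gives
\begin{equation*}
\frac{1}{(T_{\sigma^i\omega}^{\ell-i})'(T_\omega^i(x))}\le C_{p,\alpha,\underline{\alpha}}(\ell-i+k)^{-1/\alpha}.
\end{equation*}

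Substituting both bounds and setting $r=\ell-i$ reduces the estimate to a careful analysis of
\begin{equation*}
\sum_{r=0}^{\ell-1}(r+k+1)^{(1-\gamma_{\ell-1-r})/\underline{\alpha}}(r+k)^{-1/\alpha},
\end{equation*}
from which, using $1\le k\le p$ and $\underline{\alpha}\le \gamma_j\le\alpha$, the claimed bound $C_{p,\alpha,\underline{\alpha}}\ell^{1-\underline{\alpha}/\alpha}$ should follow. The main obstacle I anticipate is recovering precisely this sharp exponent rather than the weaker $\ell^{1/\underline{\alpha}-1/\alpha}$ that arises from the crudest uniform replacement $\gamma_{\ell-1-r}\mapsto \underline{\alpha}$ inside the first factor; closing this gap will likely require either retaining the true $\gamma_{i-1}$-dependence in the numerator and exploiting cancellation with the tail derivative, or splitting the sum at the escape time into an initial segment of iterates still close to the neutral fixed point (treated by the estimates above) and a remainder already in the uniformly expanding region, where $T'_\gamma$ is bounded away from $1$ and contributes only an $O(1)$ geometric tail.
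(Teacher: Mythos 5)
There are two genuine gaps in your proposal, both of which the paper's proof addresses through a more careful case analysis.

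First, the orbit localization step is wrong in general. The identity $T_{\beta(\omega)}^{-1}(b_q^{\sigma\omega})=b_{q+1}^\omega$ describes the preimage under the \emph{left} branch only. A point $x$ with $T_\omega^\ell(x)\in(b_k^{\sigma^\ell\omega},b_{k-1}^{\sigma^\ell\omega})$ can lie in any of the $2^\ell$ monotonicity intervals for $T_\omega^\ell$, and for all but one of these the forward orbit visits $(1/2,1]$ one or more times before time $\ell$. Your claim that $T_\omega^{i-1}(x)\in(b_{\ell+k-i+1}^{\sigma^{i-1}\omega},b_{\ell+k-i}^{\sigma^{i-1}\omega})\subset[0,1/2]$ for every $i$ is correct only for the leftmost branch. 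The paper deals with the general case by introducing the successive times $\ell_1<\dots<\ell_Q$ at which the orbit enters $(1/2,1]$ and estimating block by block between returns (Cases 1--4), obtaining a decaying factor $(1+\ell-\ell_{i+1})^{-1/\alpha}$ for each block whose sum over blocks is $O(1)$; the ``never escapes'' scenario is only the final, simplest case. Your closing remark about splitting at ``the escape time'' gestures towards this, but there may be many escape times, and the remainder after the last escape is not in a uniformly expanding region with a geometric tail -- it is another excursion into the neutral region.

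Second, even within the leftmost branch your estimate of the summand loses the sharp exponent, and you correctly anticipate this but do not repair it. You bound $T''_\gamma(y)\le Cy^{\gamma-1}$ from above using only the \emph{lower} bound $y\ge C(\ell+k-i+1)^{-1/\underline\alpha}$ from Lemma~\ref{bpt}, giving $(\ell+k-i+1)^{(1-\gamma)/\underline\alpha}$, and then multiply by the Lemma~\ref{newlem1} bound for the tail derivative in the form $(\ell-i+k)^{-1/\alpha}$. This yields the weaker total $\ell^{1/\underline\alpha-1/\alpha}$, which is not sufficient: downstream, one multiplies by $\ell^{1-1/\alpha}$ and needs the result to tend to $0$, and $\ell^{1+1/\underline\alpha-2/\alpha}\to 0$ fails for generic $\underline\alpha<\alpha<1/2$. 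The paper's argument instead keeps both factors expressed in terms of the same $b$-quantity so that they cancel: $T''_\gamma(y)\le C(b_{\ell_1-j}^{\sigma^j\omega})^{\gamma-1}$ is paired with the factor $b_{\ell_1-j}^{\sigma^j\omega}$ coming from the \emph{first} inequality of Lemma~\ref{newlem1} (not yet converted into a power of $j$), producing $(b_{\ell_1-j}^{\sigma^j\omega})^\gamma\le(b_{\ell_1-j}^{\sigma^j\omega})^{\underline\alpha}$, and only \emph{then} is the upper bound $b_p\le C p^{-1/\alpha}$ used to obtain $(\ell_1-j)^{-\underline\alpha/\alpha}$. The key is that both factors are controlled by the same $b$, and the upper bound on $b$ (exponent $-1/\alpha$) is used rather than the lower bound (exponent $-1/\underline\alpha$), which is what produces $1-\underline\alpha/\alpha$ in the end. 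So the ``cancellation with the tail derivative'' you hypothesize as one possible fix is indeed the mechanism, but making it precise requires applying Lemma~\ref{newlem1} to the derivative cocycle factored across the return times, which is exactly the structure your sketch does not set up.

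Your decomposition of $(T_\omega^\ell)''/((T_\omega^\ell)')^2$ is a legitimate variant of the paper's (the paper keeps $T'_{\sigma^j\omega}$ in the denominator and then discards it using $T'\ge 1$, while you absorb it into the tail factor), so the starting point is fine; the proof fails at the two points above.
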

By Lemma~\ref{lem:dist-2} and~\eqref{dec} we have
\[
\begin{split}
&\int_{b_{p+1}^{\sigma^{-(j+1)}\omega}}^{1/2} \left |\L_{\sigma^{-(j+\ell+1)}\omega}^{\ell} \left (\frac{(T_{\sigma^{-(j+\ell+1)}\omega}^{\ell})''}{((T_{\sigma^{-(j+\ell+1)}\omega}^{\ell})')^2}(\varphi-h(\sigma^{-(j+\ell+1)}\omega)) \right )\right |\, dm \\
&\le \int_{b_{p+1}^{\sigma^{-(j+1)}\omega}}^{1/2}\sum_{y\in (T_{\sigma^{-(j+\ell+1)}\omega}^\ell)^{-1}(x)}\frac{\frac{|(T_{\sigma^{-(j+\ell+1)}\omega}^{\ell})''(y)|}{((T_{\sigma^{-(j+\ell+1)}\omega}^{\ell})')^2(y)} \cdot |(\varphi-h(\sigma^{-(j+\ell+1)}\omega))(y)| }{(T_{\sigma^{-(j+\ell+1)}\omega}^\ell )'(y)}\, dm(x) \\
&=\sum_{q=1}^{p+1}\int_{[b_q^{\sigma^{-(j+1)}\omega}, b_{q-1}^{\sigma^{-(j+1)}\omega}]}
\sum_{y\in (T_{\sigma^{-(j+\ell+1)}\omega}^\ell)^{-1}(x)}\frac{\frac{|(T_{\sigma^{-(j+\ell+1)}\omega}^{\ell})''(y)|}{((T_{\sigma^{-(j+\ell+1)}\omega}^{\ell})')^2(y)} \cdot |(\varphi-h(\sigma^{-(j+\ell+1)}\omega))(y)| }{(T_{\sigma^{-(j+\ell+1)}\omega}^\ell )'(y)}\, dm(x) \\
&\le C_{p, \alpha, \underline{\alpha}}\ell^{1-\underline{\alpha}/\alpha} \int_0^1 
\L_{\sigma^{-(j+\ell+1)}\omega}^{\ell} (|  \varphi - h(\sigma^{-(j+\ell+1)}\omega) |) \, dm  \\
&=C_{p, \alpha, \underline{\alpha}}\ell^{1-\underline{\alpha}/\alpha}\|\varphi-h(\sigma^{-(j+\ell+1)}\omega)\|_{L^1(m)} \\
&=C_{p, \alpha, \underline{\alpha}}\ell^{1-\underline{\alpha}/\alpha}\|\L_{\sigma^{-l}\omega}^{l-1-j-\ell}(1-h(\sigma^{-l}\omega))\|_{L^1(m)} \\
&\le C_{p, \alpha, \underline{\alpha}}\ell^{1-\underline{\alpha}/\alpha} (l-1-j-\ell)^{1-1/\alpha} \\
&\le C_{p, \alpha, \underline{\alpha}}\ell^{2-(1+\underline{\alpha})/\alpha},
\end{split}
\]
which goes to $0$ when $\ell \to \infty$ as $\alpha<\frac 1 2$.
\end{proof}

\begin{remark}
A careful inspection of the proof of Theorem~\ref{thm:LR} yields the existence of $C_{\alpha, \underline{\alpha}}>0$ such that, for  $\psi \in L^\infty(m)$, $\omega \in \Omega'$ and $\varepsilon \neq 0$ sufficiently close to $0$,
\begin{equation}\label{refined}
\left |\frac{1}{\epsilon} \int_0^1\psi\left(h_\epsilon(\omega)-h(\omega)\right)dm-L\right |\le C_{\alpha, \underline{\alpha}} 
|\epsilon|^{1-2\alpha}\|\psi\|_{L^\infty(m)},
\end{equation}
where $L=L(\omega)$ is given by~\eqref{eq:L}. Indeed, 
as in~\eqref{eq:diffder} (taking $\xi=1$) we have that there exists $l_0=l_0(\epsilon)$ such that for $l\ge l_0$,
\[
\begin{split}
\left |\frac{1}{\epsilon} \int_0^1\psi\left(h_\epsilon(\omega)-h(\omega)\right)dm-L\right | & \le \left |\frac{1}{\varepsilon} \int_0^1  \psi ((\L_{\sigma^{-l}\omega}^\epsilon)^l 1- \L_{\sigma^{-l}\omega}^l1)\, dm-L\right |\\
&\phantom{\le}+C_\alpha |\varepsilon|\|\psi\|_{L^\infty(m)}.
\end{split}
\]
Recall that the term \[\frac{1}{\varepsilon} \int_0^1  \psi ((\L_{\sigma^{-l}\omega}^\epsilon)^l 1- \L_{\sigma^{-l}\omega}^l1)\, dm-L\]
is the sum of the terms~\eqref{E1} and~\eqref{E2}, where the absolute value of~\eqref{E2} is bounded by $C_\alpha|\epsilon| \|\psi\|_{L^\infty(m)}$.
Similarly to~\eqref{eq:27dev} and by choosing $n=\lfloor |\epsilon|^{-\alpha}\rfloor$, we have that for $l>n$,
\[
\begin{split}
&\left | \sum_{j=0}^{l-1}\delta(\sigma^{-(j+1)}\omega)\int_0^1 \psi (\L_{\sigma^{-j}\omega}^\epsilon )^j\partial_\gamma \L_{\beta(\sigma^{-(j+1)}\omega)} (\L_{\sigma^{-l}\omega}^{l-1-j}(1))\, dm  -L \right | \\
&\le \left |\sum_{j=0}^n \delta (\sigma^{-(j+1)}\omega)\int_0^1\psi ((\L_{\sigma^{-j}\omega}^\epsilon )^j -\L_{\sigma^{-j}\omega}^j)[(X_{\beta(\sigma^{-(j+1)}\omega)} N_{\beta(\sigma^{-(j+1)}\omega)}(\L_{\sigma^{-l}\omega}^{l-1-j}(1)))']\, dm \right| \\
&\phantom{\le}+
\bigg |\sum_{j=0}^n \delta(\sigma^{-(j+1)}\omega)\int_0^1 \psi \L_{\sigma^{-j}\omega}^j((X_{\beta(\sigma^{-(j+1)}\omega)} N_{\beta(\sigma^{-(j+1)}\omega)}(\L_{\sigma^{-l}\omega}^{l-1-j}(1)))'-\\
&\phantom{\le}
-(X_{\beta(\sigma^{-(j+1)}\omega)} N_{\beta(\sigma^{-(j+1)}\omega)} (h(\sigma^{-(j+1)}\omega)))')\, dm\bigg |+
C_\alpha |\epsilon|^{1-2\alpha}\|\psi\|_{L^\infty(m)},
\end{split}
\]
since $\sum\limits_{j=n+1}^\infty j^{1-1/\alpha}=O(n^{2-1/\alpha})=O(|\epsilon|^{1-2\alpha})$.
As in the proof of Theorem~\ref{thm:LR},
\[
\begin{split}
&\left |\sum_{j=0}^n \delta (\sigma^{-(j+1)}\omega)\int_0^1\psi ((\L_{\sigma^{-j}\omega}^\epsilon )^j -\L_{\sigma^{-j}\omega}^j)[(X_{\beta(\sigma^{-(j+1)}\omega)} N_{\beta(\sigma^{-(j+1)}\omega)}(\L_{\sigma^{-l}\omega}^{l-1-j}(1)))']\, dm \right| \\
&\le C_{\alpha, \underline{\alpha}}n^2|\epsilon|\|\psi\|_{L^\infty(m)}=C_{\alpha, \underline{\alpha}}|\epsilon|^{1-2\alpha}\|\psi\|_{L^\infty(m)}.
\end{split}
\]
Moreover, the arguments in the proof of Theorem~\ref{thm:LR} show that for $0\le j \le n$ and $\omega \in \Omega'$, 
\[
\begin{split}
\| (X_{\beta(\sigma^{-(j+1)}\omega)} N_{\beta(\sigma^{-(j+1)}\omega)}(\L_{\sigma^{-l}\omega}^{l-1-j}(1)))'
-(X_{\beta(\sigma^{-(j+1)}\omega)} N_{\beta(\sigma^{-(j+1)}\omega)} (h(\sigma^{-(j+1)}\omega)))'\|_{L^1(m)} \le |\epsilon|.
\end{split}
\]
For example, the first term in~\eqref{eq:XNdiff} can be made $\le |\epsilon|/2$ by choosing $l$ large enough (see~\eqref{ccalpha}), and the other term can be handled in the same way by choosing (independently on $\omega$)  $l$ or $p$ large enough. Hence, 
\[
\begin{split}
&\bigg |\sum_{j=0}^n \delta(\sigma^{-(j+1)}\omega)\int_0^1 \psi \L_{\sigma^{-j}\omega}^j((X_{\beta(\sigma^{-(j+1)}\omega)} N_{\beta(\sigma^{-(j+1)}\omega)}(\L_{\sigma^{-l}\omega}^{l-1-j}(1)))'-\\
&\phantom{\le}
-(X_{\beta(\sigma^{-(j+1)}\omega)} N_{\beta(\sigma^{-(j+1)}\omega)} (h(\sigma^{-(j+1)}\omega)))')\, dm\bigg | \le n|\epsilon| \|\psi\|_{L^\infty(m)}.
\end{split}
\] 
Putting all the estimates together yields~\eqref{refined}.
\end{remark}
Finally, we observe that~\eqref{refined} yields the following annealed linear response result.
\begin{cor}\label{cor:ALR}
In the context of Theorem~\ref{thm:LR}, the following holds.
\[
\lim_{\epsilon \to 0}\frac{1}{\epsilon}\int_\Omega\int_0^1\Psi(\omega, \cdot)(h_\epsilon(\omega)-h(\omega))\, dm\, d\mathbb P(\omega)=\int_\Omega L(\omega)\, d\mathbb P(\omega),
\]
for $\Psi\in L^\infty(\mathbb P\times m)$, where  $L(\omega)$ is as in~\eqref{eq:L} with $\psi=\Psi(\omega, \cdot)$.
\end{cor}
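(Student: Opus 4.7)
The plan is to integrate the pointwise refined quenched estimate \eqref{refined} against $\mathbb P$ and to pass to the limit using a uniform bound. First, for $\mathbb P$-a.e.\ $\omega\in \Omega'$ and for $\Psi\in L^\infty(\mathbb P\times m)$, applying \eqref{refined} with $\psi=\Psi(\omega,\cdot)$ gives
\begin{equation}\label{eq:corref}
\left|\frac{1}{\epsilon}\int_0^1\Psi(\omega,\cdot)(h_\epsilon(\omega)-h(\omega))\,dm-L(\omega)\right|\le C_{\alpha,\underline{\alpha}}|\epsilon|^{1-2\alpha}\|\Psi(\omega,\cdot)\|_{L^\infty(m)},
\end{equation}
whenever $\epsilon\neq 0$ is sufficiently small, where $L(\omega)$ is as in \eqref{eq:L} with $\psi=\Psi(\omega,\cdot)$. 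Since $\|\Psi(\omega,\cdot)\|_{L^\infty(m)}\le \|\Psi\|_{L^\infty(\mathbb P\times m)}$ for $\mathbb P$-a.e.\ $\omega$, the right-hand side of \eqref{eq:corref} is bounded by $C_{\alpha,\underline{\alpha}}|\epsilon|^{1-2\alpha}\|\Psi\|_{L^\infty(\mathbb P\times m)}$ uniformly in $\omega$.

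Next, I would check that $\omega\mapsto L(\omega)$ is measurable and integrable. Measurability follows from the measurability of $\omega\mapsto h(\omega)$ established in Proposition~\ref{p1}, together with the measurability of $\omega\mapsto\beta(\omega),\delta(\omega)$, and the fact that the series defining $L(\omega)$ converges (uniformly in $\omega$) in $L^1(m)$-tested quantities. For integrability, the bound $|L(\omega)|\le C\|\Psi\|_{L^\infty(\mathbb P\times m)}$, uniform in $\omega\in\Omega'$, follows from Theorem~\ref{DEC} and Lemma~\ref{estim} exactly as in the very first lines of the proof of Theorem~\ref{thm:LR} (where convergence of the series defining $L(\omega)$ was established).

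Finally, I would integrate \eqref{eq:corref} over $\omega\in\Omega$ with respect to $\mathbb P$. This yields
\[
\left|\frac{1}{\epsilon}\int_\Omega\int_0^1\Psi(\omega,\cdot)(h_\epsilon(\omega)-h(\omega))\,dm\,d\mathbb P(\omega)-\int_\Omega L(\omega)\,d\mathbb P(\omega)\right|\le C_{\alpha,\underline{\alpha}}|\epsilon|^{1-2\alpha}\|\Psi\|_{L^\infty(\mathbb P\times m)}.
\]
Since $\alpha<1/2$ by hypothesis, $|\epsilon|^{1-2\alpha}\to 0$ as $\epsilon\to 0$, which gives the claim. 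There is no real obstacle here beyond invoking \eqref{refined}; the only point requiring mild care is the verification of the uniform (in $\omega$) nature of the constant $C_{\alpha,\underline{\alpha}}$ in \eqref{refined}, but that is built into the statement of the refined estimate.
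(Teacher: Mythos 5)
Your proof is correct and is precisely what the paper intends: the paper does not spell out a separate argument for Corollary~\ref{cor:ALR} but simply asserts that it follows from the refined pointwise bound~\eqref{refined}, which is exactly the argument you give (apply the estimate $\omega$-wise, note the right-hand side is bounded uniformly in $\omega$ by $C_{\alpha,\underline{\alpha}}|\epsilon|^{1-2\alpha}\|\Psi\|_{L^\infty(\mathbb P\times m)}$, integrate in $\mathbb P$, and let $\epsilon\to 0$ using $\alpha<1/2$). Your added remarks on measurability and integrability of $\omega\mapsto L(\omega)$ and on the uniformity of the $\epsilon$-threshold in~\eqref{refined} are reasonable and fill in details the paper leaves implicit.
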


\subsection{Linear response for sequential dynamics and non-uniqueness of sequential a.c.i.m}\label{ssec:seq}
The arguments developed in the previous sections apply also to sequential dynamics formed by nonautonomous compositions of the form
\[
T_{\beta_{k+n-1}}\circ \ldots \circ T_{\beta_{k+1}}\circ T_{\beta_k},
\]
where $k\in \Z$, $n\in \N$ and $(\beta_k)_{k\in \Z}\subset (0, 1)$.

\begin{proposition}\label{ABX}
    Let $(\beta_k)_{k\in \Z}\subset (0, 1)$ be such that $\alpha:=\sup_{k\in \Z}\beta_k<1$. Then, there exists a  sequence $(h_k)_{k\in \Z}\subset \C_\ast(a)\cap \C_2$ such that 
    \begin{equation}\label{sim}
    \mathcal L_k h_k=h_{k+1} \quad \text{and} \quad \int_0^1h_k\, dm=1 \quad \text{for $k\in \Z$,}
    \end{equation}
    where $\L_k$ denotes the transfer operator associated with $T_{\beta_k}$. Moreover, $(h_k)_{k\in \Z}$ is the unique sequence in $\C_\ast(a)$ satisfying~\eqref{sim}.
\end{proposition}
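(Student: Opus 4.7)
My plan is to follow the argument used for Proposition~\ref{p1} in the random setting, replacing $\sigma^{-n}\omega$ by the integer shift $k-n$. First I would observe that, since $\alpha=\sup_k \beta_k<1$, Lemma~\ref{lem2} and Proposition~\ref{prop:conecontraction} give constants $a=a(\alpha)$ and $b_1=b_1(\alpha)$, $b_2=b_2(\alpha)$, $b_3=b_3(\alpha)$, depending only on $\alpha$, such that each transfer operator $\mathcal L_k$ preserves $\mathcal C_\ast(a)$ and $\mathcal C_j$, $j\in\{2,3\}$. I would also note that the proof of Theorem~\ref{DEC} in \cite{KL} is inherently sequential: it yields, for any composition $\mathcal L_k^n:=\mathcal L_{k+n-1}\circ\dots\circ\mathcal L_k$ formed from parameters bounded above by $\alpha$, the bound
\[
\int_0^1\lvert\mathcal L_k^n(\phi-\psi)\rvert\,dm\le C_\alpha\bigl(\|\phi\|_{L^1(m)}+\|\psi\|_{L^1(m)}\bigr)\,n^{-1/\alpha+1},
\]
for all $\phi,\psi\in\mathcal C_\ast(a)$ with $m(\phi)=m(\psi)$.

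For each $k\in\mathbb Z$, I would define the candidate density as the pullback of the constant $1$ from the infinite past,
\[
h_k:=\lim_{n\to\infty}\mathcal L_{k-n}^{n}(1),
\]
and show that the sequence $(\mathcal L_{k-n}^n(1))_{n\ge 1}$ is Cauchy in $L^1(m)$: for $m>n$,
\[
\|\mathcal L_{k-m}^m(1)-\mathcal L_{k-n}^n(1)\|_{L^1(m)}=\|\mathcal L_{k-n}^n(\mathcal L_{k-m}^{m-n}(1)-1)\|_{L^1(m)}\le 2C_\alpha n^{-1/\alpha+1},
\]
since $\mathcal L_{k-m}^{m-n}(1)\in\mathcal C_\ast(a)$ with integral $1$. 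Equivariance $\mathcal L_k h_k=h_{k+1}$ and $m(h_k)=1$ then follow by continuity of $\mathcal L_k$ on $L^1(m)$ and \eqref{intpres}.

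Next, I would verify $h_k\in\mathcal C_\ast(a)\cap\mathcal C_2$ by the same Arzel\`a--Ascoli compactness argument used in the proof of Proposition~\ref{p1}: since $\mathcal L_{k-n}^n(1)\in\mathcal C_\ast(a)\cap\mathcal C_3$ for every $n$, the sequences $(X^{\alpha+1}\mathcal L_{k-n}^n(1))_n$, $(X^{\alpha+2}(\mathcal L_{k-n}^n(1))')_n$ and $(X^{\alpha+3}(\mathcal L_{k-n}^n(1))'')_n$ are equibounded and equicontinuous on compact subintervals of $(0,1]$, which upgrades the $L^1$-convergence of $(\mathcal L_{k-n}^n(1))_n$ to pointwise convergence of the function and its first two derivatives on $(0,1]$. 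Closedness of $\mathcal C_\ast(a)$ and $\mathcal C_2$ then gives $h_k\in\mathcal C_\ast(a)\cap\mathcal C_2$.

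Finally, for uniqueness, if $(\bar h_k)_{k\in\mathbb Z}\subset\mathcal C_\ast(a)$ is any other sequence satisfying \eqref{sim}, then for every $n\ge 1$,
\[
\|h_k-\bar h_k\|_{L^1(m)}=\|\mathcal L_{k-n}^n(h_{k-n}-\bar h_{k-n})\|_{L^1(m)}\le 2C_\alpha n^{-1/\alpha+1},
\]
by the sequential decay estimate above applied to $\phi=h_{k-n}$ and $\psi=\bar h_{k-n}$ (both in $\mathcal C_\ast(a)$ with integral $1$). Letting $n\to\infty$ yields $h_k=\bar h_k$. The only mildly delicate point is ensuring that the constant $C_\alpha$ in the sequential decay estimate truly depends only on the upper bound $\alpha$ and not on the specific sequence $(\beta_k)$, but this is precisely the content of the cone-contraction machinery of \cite{KL} applied uniformly to the family $\{\mathcal L_\beta:0<\beta\le\alpha\}$. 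The measurability discussion needed in the random setting is absent here, which is why no analog of the set $\Omega'$ is required.
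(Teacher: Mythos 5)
Your proposal is correct and matches the paper's approach: the paper proves Proposition~\ref{ABX} exactly by repeating the argument of claim~\ref{p1-1} of Proposition~\ref{p1} with $\psi_n^\omega$ replaced by $\psi_n^k=\L_{k-n}^n 1$, which is precisely what you spell out (Cauchy estimate from the sequential decay of~\cite{KL}, Arzel\`a--Ascoli upgrade to membership in $\C_\ast(a)\cap\C_2$, and uniqueness via the same decay bound). Your closing observations that the constants depend only on $\alpha$ and that no measurability discussion is needed are accurate and in line with the paper's intent.
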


\begin{proof}
The proof can be established by arguing as in the proof of claim~\ref{p1-1}  of Proposition~\ref{p1}, and by replacing $\psi_n^\omega$ with $\psi_n^k=\L_{k-n}^n 1$, where 
\[
\L_n^m=\L_{m+n-1}\circ \ldots \circ \L_{n+1}\circ \L_n, \quad \text{for $n\in \Z$ and $m\in \N$.}
\]
\end{proof}

The sequence $(h_k)_{k\in \Z}$ given by Proposition~\ref{ABX} can be regarded as a sequential a.c.i.m associated with the sequence $(T_{\beta_k})_{k\in \Z}$. 
In fact, the argument in the previous proof shows that $(h_k dm)_{k\in \Z}$ is the unique SRB state in the sense of Ruelle~\cite[Section 4]{Ruelle-diffSRB}. Namely, $\mu_k:=h_k \,dm$ satisfies 
$$\mu_k=\lim_{n\to \infty} T_{\beta_{k-n}}^* T_{\beta_{k-n+1}}^*\dots T_{\beta_{k-1}}^* m.$$
However, the analogy one can draw with the deterministic case is subtle, as  there are infinitely many sequential a.c.i.m's with densities in $L^1(m)$. This is a consequence of the following general result: 
\begin{proposition}[Non-uniqueness of sequential a.c.i.m]\label{L:non-uniq}
Assume $(\mathcal L_n)_{n\in \Z}$ is a sequence of transfer operators in $L^1(m)$, associated with surjective, finite-branched and nonsingular maps $T_n\colon [0, 1] \to [0, 1]$ with derivative bounded above and away from zero. 
Suppose $(h_n)_{n\in \Z}$ is a sequence of densities in $L^1(m)$ such that $\mathcal L_n h_n=h_{n+1}$ for $n\in \Z$. Furthermore, assume that there exists $c>0$ such that $h_0\geq c>0$.
Then, there exist uncountably many sequences of densities $(\tilde h_n)_{n\in \Z}\subset L^1(m)$ verifying $\mathcal L_n\tilde h_n=\tilde h_{n+1}$ for $n\in \Z$.
\end{proposition}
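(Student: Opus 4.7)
The strategy is to construct a one-parameter family of sequences by perturbing $h_0$ at time $0$, then using a measurable-selection argument to produce non-negative $L^1(m)$ preimages under each $\mathcal L_n$ for $n<0$, and pushing forward by $\mathcal L_n$ for $n\ge 0$.

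\smallskip
\emph{Step 1 (parametrized perturbation of $h_0$).} Fix any $\psi\in L^\infty(m)$ with $\int_0^1\psi\,dm=0$, $\psi\not\equiv 0$, and $\|\psi\|_{L^\infty(m)}\le c/2$. For each $t\in(0,1]$, set $\tilde h_0^{(t)}:=h_0+t\psi$. The assumption $h_0\ge c$ gives $\tilde h_0^{(t)}\ge c/2>0$, hence each $\tilde h_0^{(t)}$ is a density in $L^1(m)$, and distinct parameters $t$ yield distinct $\tilde h_0^{(t)}$.

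\smallskip
\emph{Step 2 (non-negative $L^1$ preimage).} I claim that for every density $g\in L^1(m)$ and every $n\in\mathbb Z$ there exists a density $f\in L^1(m)$ with $\mathcal L_n f=g$. Decompose $[0,1]=\bigsqcup_{j=1}^{k_n}I_{n,j}$ into the monotone branches of $T_n$, set $J_{n,j}:=T_n(I_{n,j})$, and let $g_{n,j}\colon J_{n,j}\to I_{n,j}$ denote the inverse of $T_n|_{I_{n,j}}$. Surjectivity of $T_n$ gives $\bigcup_j J_{n,j}=[0,1]$, so
\[
A_{n,j}:=J_{n,j}\setminus\bigcup_{i<j}J_{n,i},\quad j=1,\ldots,k_n,
\]
is a measurable partition of $[0,1]$ with $A_{n,j}\subset J_{n,j}$. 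Define
\[
f(y):=g(T_n(y))\,|T_n'(y)|\cdot\sum_{j=1}^{k_n}\mathbf 1_{g_{n,j}(A_{n,j})}(y).
\]
The sets $g_{n,j}(A_{n,j})\subset I_{n,j}$ are pairwise disjoint, so $f\ge 0$; boundedness of $|T_n'|$ places $f$ in $L^1(m)$. Using the formula $\mathcal L_n f(x)=\sum_j f(g_{n,j}(x))/|T_n'(g_{n,j}(x))|$ together with $T_n(g_{n,j}(x))=x$ and the fact that $g_{n,i}(x)\in g_{n,j}(A_{n,j})$ iff $i=j$ and $x\in A_{n,j}$, one finds $\mathcal L_n f(x)=g(x)\sum_j\mathbf 1_{A_{n,j}}(x)=g(x)$ for a.e.\ $x$. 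A change of variables branch-by-branch gives $\int f\,dm=\sum_j\int_{A_{n,j}}g\,dm=\int g\,dm=1$.

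\smallskip
\emph{Step 3 (iteration).} Apply Step 2 recursively (with $n=-1,-2,\ldots$) to construct densities $\tilde h_{-1}^{(t)},\tilde h_{-2}^{(t)},\ldots$ in $L^1(m)$ satisfying $\mathcal L_n\tilde h_n^{(t)}=\tilde h_{n+1}^{(t)}$ for every $n<0$. For $n\ge 1$, set $\tilde h_n^{(t)}:=\mathcal L_{n-1}\circ\cdots\circ\mathcal L_0\,\tilde h_0^{(t)}$, which is automatically a density since each $\mathcal L_k$ preserves densities. This yields, for every $t\in(0,1]$, a two-sided sequence $(\tilde h_n^{(t)})_{n\in\mathbb Z}\subset L^1(m)$ of densities verifying the equivariance relation. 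Since $\tilde h_0^{(t)}$ depends injectively on $t$, the resulting family contains uncountably many distinct sequences.

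\smallskip
\emph{Main obstacle.} The crux of the argument is Step 2: obtaining a \emph{non-negative} $L^1$ preimage under $\mathcal L_n$. Surjectivity of $\mathcal L_n$ on $L^1(m)$ by itself only furnishes a preimage that can change sign. The finite-branched structure of $T_n$, combined with the two-sided derivative bound, is what allows us to allocate the mass of $g$ at each point $x$ to a single chosen pre-image branch via the measurable partition $(A_{n,j})_j$, producing a genuine density preimage. Once Step 2 is in hand, iteration and parametrization are routine.
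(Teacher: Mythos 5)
Your proposal is correct and follows essentially the same strategy as the paper: perturb $h_0$ by a mean-zero $L^\infty$ function small enough to preserve positivity, push forward by $\mathcal L_n$ for $n\ge 0$, and pull back through a density-valued preimage for $n<0$. The only (minor) difference is in the preimage construction: the paper distributes the mass of $g$ equally among all preimage branches of $T_n$ (via the weight $1/n(x)$), whereas you allocate all mass to a single chosen branch via the first-fit partition $(A_{n,j})_j$; both are valid and equally elementary.
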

\begin{proof}
Let $0\neq \psi \in L^\infty$ be such that $\|\psi\|_{L^\infty}<c$ and $\int_0^1 \psi dm = 0$. Observe that there are uncountably many possible choices.
Let $\tilde h_0= h_0 + \psi$. Then $\tilde h_0 \neq h_0$ is a density.
For $j>0$, let $\tilde h_j= \mathcal L_j \tilde h_{j-1}$. 
For $j<0$, let $\tilde h_j$ be a density such that $\mathcal L_j \tilde h_j=\tilde h_{j+1}$. This choice is possible, as $\mathcal L_j$ is onto in $L^1$. 
In fact, if $h\in L^1$, then $\eta(x):=\tfrac{1}{n(x)}|T_{j}'(x)| h( T_{j}(x))$, where $n(x)$ is the number of preimages of $x$ under $T_j$ satisfies $\eta \in L^1$ and
$\mathcal L_j(\eta) = h$. Furthermore, if $h$ is a density, so is $\eta$.
In this way, $(\tilde h_n)_{n\in \Z}\subset L^1(m)$  is a sequence of densities satisfying $\mathcal L_n \tilde h_n= \tilde h_{n+1}$ for $n\in \Z$. 
\end{proof}

Our intermittent sequential dynamics satisfies the assumptions in Proposition \ref{L:non-uniq}. Indeed, it is sufficient to observe that the arguments in the proof of Lemma~\ref{TL} apply here as well: one can then argue as in the proof of claim~\ref{p1-2} of Proposition~\ref{p1}, and conclude that there exists $c>0$ such that $h_k\ge c$ for $k\in \Z$.

We emphasize that the above construction is applicable to the case where $(\beta_k)_{k\in \Z}$ is a constant sequence. Therefore, even if our sequential dynamics is generated by compositions of a single map $T_\beta$ with itself, one still has infinitely many sequential a.c.i.m's. However, only one of them lies in $\C_\ast(a)$ and is the constant sequence $(h)_{k\in \Z}$, where $h$ is the density of the unique  (deterministic) a.c.i.m for $T_\beta$. 

On the other hand, we observe that each sequential a.c.i.m $(\bar h_k)_{k\in \Z}\subset L^1(m)$ satisfies the following:
\begin{equation}\label{asyequiv}
\lim_{k\to \infty}\|h_k-\bar h_k\|_{L^1(m)}=0,
\end{equation}
where $(h_k)_{k\in\Z}\subset \mathcal C_*(a)$ is the unique SRB state from Proposition~\ref{ABX}.
Indeed, choose $\epsilon>0$ arbitrary and let $\varphi \in C^1[0, 1]$ be such that $\|\varphi-\bar h_0\|_{L^1(m)}\le \epsilon$. We can assume that $\int_0^1\varphi\, dm=1$.  Then, 
\[
\begin{split}
\|h_k-\bar h_k\|_{L^1(m)}=\|\L_0^k(h_0- \bar h_0)\| & \le \|\L_0^k(h_0-\varphi)\|_{L^1(m)}+\|\L_0^k(\varphi-\bar h_0)\|_{L^1(m)}\\
&\le \|\L_0^k(h_0-\varphi)\|_{L^1(m)}+\|\varphi-\bar h_0\|_{L^1(m)}\\
&\le \|\L_0^k(h_0-\varphi)\|_{L^1(m)}+\epsilon.
\end{split}
\]
Next, as $\varphi$ is H\"{o}lder and $h_0\in \C_\ast(a)$, it follows from~\cite[Theorem 1.1]{KL} that $\lim\limits_{k\to \infty}\|\L_0^k(h_0-\varphi)\|_{L^1(m)}=0$. Since $\epsilon>0$ was arbitrary, we conclude that~\eqref{asyequiv} holds.

Since under the assumptions of Proposition~\ref{ABX} we have the uniqueness of sequential a.c.i.m in the cone $\C_{\ast}(a)$, which coincides with Ruelle's SRB state, it makes sense to consider the linear response restricted to the class of sequential a.c.i.m's which belong to this cone.
The following result can be established by repeating the arguments in the proof of Theorem~\ref{thm:LR}.
\begin{theorem}
\label{thm:seqLR}
Let $0<\underline{\alpha}<\alpha<\frac12$ and $\epsilon_0 >0$. For each $k\in \mathbb Z$, assume $\beta_k, \delta_k$ satisfy the following: $0\leq  \delta_k<1$,
                $\underline{\alpha}\leq\beta_k-\epsilon_0$ and $\beta_k+\epsilon_0\le \alpha$.
                Then, for every $k\in \mathbb Z$ and $\psi\in L^\infty(m)$,
                \[
                \lim_{\epsilon\to0}\frac{1}{\epsilon} \int_0^1\psi\left(h^\epsilon_{k}-h_k\right)dm=
                - \sum_{i=0}^\infty \delta_{k-i-1} \int_0^1\psi \, \L_{k-i}^i \left(X_{\beta_{k-i-1}} N_{\beta_{k-i-1}} (h_{k-i-1}) \right)'\, dm, 
                \]
                where $(h^\epsilon_{k})_{k\in\mathbb Z}\subset \C_\ast(a)$ is the sequential a.c.i.m. 
                associated with $T^\epsilon=(T_{\beta_k+\epsilon \delta_k})_{k\in\mathbb Z}$ for $\epsilon \in (-\epsilon_0, \epsilon_0)$, and $h_k:=h_k^0$, $k\in \Z$.
                \end{theorem}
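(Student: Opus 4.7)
The strategy is to mirror the proof of Theorem~\ref{thm:LR}, transcribing the random notation into sequential notation. Concretely, I would replace everywhere $\sigma^{-j}\omega$ by the index $k-j$, so that $\L_{\sigma^{-j}\omega}$ becomes $\L_{k-j}$, $\beta(\sigma^{-j}\omega)$ becomes $\beta_{k-j}$, $\delta(\sigma^{-j}\omega)$ becomes $\delta_{k-j}$, and $h(\sigma^{-j}\omega)$ becomes $h_{k-j}$. The decay estimate of Theorem~\ref{DEC} is inherited from~\cite[Theorem 1.1]{KL}, which is originally stated for general sequential compositions of LSV maps with parameters bounded by $\alpha$, so it applies verbatim to $\L_{k-n}^n$ in place of $\L_{\sigma^{-n}\omega}^n$. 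The existence and uniqueness of the sequential a.c.i.m. in $\C_\ast(a)\cap \C_2$ is given by Proposition~\ref{ABX}, so both $(h_k)_{k\in\Z}$ and $(h^\epsilon_k)_{k\in\Z}$ are well-defined.

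The main decomposition then proceeds exactly as in~\eqref{iu}: for $l\in \N$, I write
\[
\int_0^1 \psi(h^\epsilon_k-h_k)\,dm = \int_0^1 \psi \big((\L_{k-l}^\epsilon)^l(1)-\L_{k-l}^l(1)\big)\,dm + O\big(\|\psi\|_\infty l^{1-1/\alpha}\big),
\]
where $\L_n^\epsilon$ denotes the transfer operator of $T_{\beta_n+\epsilon \delta_n}$, and the error comes from $\eqref{dec}$ applied to $h^\epsilon_{k-l},\,h_{k-l}\in \C_\ast(a)$. Using the standard telescoping
\[
(\L^\epsilon_{k-l})^l-\L_{k-l}^l = \sum_{j=0}^{l-1}(\L_{k-j}^\epsilon)^j (\L_{k-j-1}^\epsilon-\L_{k-j-1})\L_{k-l}^{l-1-j},
\]
together with the integral representation $\L_{k-j-1}^\epsilon-\L_{k-j-1} = \epsilon \delta_{k-j-1}\partial_\gamma \L_{\beta_{k-j-1}}+O(\epsilon^2)$ expanded via Taylor's formula, produces exactly the analogues of the expressions~\eqref{E1} and~\eqref{E2}. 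The second-order term is controlled by Lemma~\ref{HZU} and the summability of $\sum j^{1-1/\alpha}$, giving a contribution of order $|\epsilon|$.

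For the first-order term, I split it into a finite head ($0\le j\le n$) and a tail ($j>n$). The tail is made small using Lemma~\ref{estim} and~\eqref{dec}, establishing the convergence of the target series that defines the right-hand side of the statement. For the head, I follow the two sub-estimates \eqref{eq:27dev}(I) and \eqref{eq:27dev}(II) in the proof of Theorem~\ref{thm:LR}: the first uses the cone decomposition from Lemma~\ref{estim} together with another telescoping that produces an $O(|\epsilon|)$ bound; the second compares $\L_{k-l}^{l-1-j}(1)$ with $h_{k-j-1}$ via decay of correlations, using Lemmas~\ref{newlem1} and~\ref{lem:dist-2} to handle the derivative terms on dyadic-type intervals $(b^{\sigma^{-(j+1)}\omega}_{q},b^{\sigma^{-(j+1)}\omega}_{q-1})$, which here become $(b^{k-j-1}_q,b^{k-j-1}_{q-1})$ with $b^{n}_p := (T_n^p)^{-1}(1/2)$ defined through the sequential iteration of the maps.

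The principal obstacle, and the reason for the hypothesis $\alpha<1/2$, is the same as in the random case: the estimate of the second summand in~\eqref{eq:almostlast2} produces a factor $\ell^{\,2-(1+\underline{\alpha})/\alpha}$, which tends to zero as $\ell\to\infty$ only when $\alpha<1/2$. All other steps transfer unchanged, since Lemmas~\ref{bpt}, \ref{newlem1} and~\ref{lem:dist-2} depend only on the fact that the parameters stay in $[\underline{\alpha},\alpha]$ along the relevant orbits, a condition automatically satisfied for the fixed sequence $(\beta_k)_{k\in\Z}$ under the assumption $\underline\alpha\le \beta_k-\epsilon_0 \le \beta_k+\epsilon_0\le \alpha$. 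A mild notational simplification is that no analogue of the set $\Omega'$ is needed since we work with a single prescribed sequence.
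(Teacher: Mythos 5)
Your proof proposal is correct and takes essentially the same approach as the paper, which simply states that the result ``can be established by repeating the arguments in the proof of Theorem~\ref{thm:LR}''; your transcription $\sigma^{-j}\omega \mapsto k-j$ is exactly the intended translation, and you correctly identify that the auxiliary ingredients (Theorem~\ref{DEC}, Proposition~\ref{ABX}, Lemmas~\ref{estim}, \ref{HZU}, \ref{bpt}, \ref{newlem1}, \ref{lem:dist-2}) depend only on the parameters lying in $[\underline{\alpha},\alpha]$ and hence carry over to the sequential setting.
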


\subsection{A technical lemma}
We use the same notation as in Subsection~\ref{racim}.
\begin{lemma}\label{TL}
There exists $N\in \mathbb N$ and $c>0$ such that 
\[
\mathcal L_\omega^N \varphi\ge c, \quad \text{for  $\omega \in \Omega'$ and $\varphi\in \mathcal C_*(a)$ with $\int_0^1 \varphi\, dm=1$.}
\]
\end{lemma}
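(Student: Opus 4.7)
My claim is that $N=1$ already suffices, with a constant $c>0$ that will depend only on $\alpha$ (through $a=a(\alpha)$). The argument splits into a pointwise lower bound for the transfer operator and a uniform lower bound on $\varphi(1)$ for $\varphi$ in the cone with unit integral.

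For the first ingredient, observe that the second branch of every $T_{\beta(\omega)}$ is the common map $y\mapsto 2y-1$ (with derivative $2$ and inverse branch $x\mapsto (x+1)/2$), so the transfer operator always decomposes as
\[
\mathcal{L}_\omega\varphi(x) \;=\; g'_{\beta(\omega)}(x)\,\varphi\bigl(g_{\beta(\omega)}(x)\bigr) \;+\; \tfrac{1}{2}\,\varphi\bigl((x+1)/2\bigr) \;\ge\; \tfrac{1}{2}\,\varphi\bigl((x+1)/2\bigr) \;\ge\; \tfrac{1}{2}\,\varphi(1),
\]
where the first inequality uses $\varphi\ge 0$ and the second uses that $\varphi$ is decreasing and $(x+1)/2\le 1$. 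This bound is uniform in $\omega\in\Omega'$ and in $x\in[0,1]$.

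For the second ingredient, I exploit the cone definition directly. Since $X^{\alpha+1}\varphi$ is increasing on $(0,1]$, one has $\varphi(x)\le\varphi(1)\,x^{-\alpha-1}$ for $x\in(0,1]$, while the third cone axiom applied with $m(\varphi)=1$ gives $\int_0^{x_*}\varphi\,dm\le a\,x_*^{1-\alpha}$ for any $x_*\in(0,1)$. Choosing $x_*:=(2a)^{-1/(1-\alpha)}\in(0,1)$ makes $a\,x_*^{1-\alpha}=\tfrac{1}{2}$, and splitting the integral yields
\[
1 \;=\; \int_0^{x_*}\!\varphi\,dm \;+\; \int_{x_*}^1\!\varphi\,dm \;\le\; \tfrac{1}{2} \;+\; \varphi(1)\int_{x_*}^1 x^{-\alpha-1}\,dx \;\le\; \tfrac{1}{2} \;+\; \frac{\varphi(1)}{\alpha}\,(2a)^{\alpha/(1-\alpha)}.
\]
Rearranging gives $\varphi(1)\ge c_0:=\frac{\alpha}{2}(2a)^{-\alpha/(1-\alpha)}>0$, a constant depending only on $\alpha$.

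Combining the two estimates gives $\mathcal{L}_\omega\varphi(x)\ge c_0/2$ for every $x\in[0,1]$, every $\omega\in\Omega'$, and every admissible $\varphi$, so the lemma holds with $N=1$ and $c=c_0/2$. There is essentially no real obstacle here; the only subtle point is recognising that one must combine \emph{both} the $L^1$-growth bound $\int_0^x\varphi\,dm\le ax^{1-\alpha}$ and the pointwise singularity control $\varphi\le\varphi(1)x^{-\alpha-1}$ coming from monotonicity of $X^{\alpha+1}\varphi$: either alone would allow mass to concentrate near $0$ and force $\varphi(1)$ arbitrarily small, but together they yield the uniform lower bound.
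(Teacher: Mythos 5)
Your proof is correct, and it takes a genuinely different, and somewhat cleaner, route than the paper's. The paper also begins by choosing $\delta$ with $a\delta^{1-\alpha}=\tfrac12$ to get $\int_\delta^1\varphi\,dm\ge\tfrac12$, but then uses the decreasing property only to conclude $\varphi\ge\tfrac{1}{2(1-\delta)}$ \emph{on $(0,\delta]$}; this says nothing about $\varphi(1)$, so the paper then picks $N$ large enough that the far-left inverse branch of $T_\omega^N$ (the interval $(0,b_{N-1}^\omega]$, uniformly small in $\omega$ by Lemma~\ref{bpt}) lands inside $(0,\delta]$, and retains only that one inverse branch, giving $c=\tfrac{1}{2(1-\delta)3^N}$. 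You instead extract a uniform lower bound on $\varphi(1)$ itself by combining the third cone axiom (mass near $0$) with the monotonicity of $X^{\alpha+1}\varphi$ (pointwise upper bound $\varphi(x)\le\varphi(1)x^{-\alpha-1}$), and then exploit that the second branch of every LSV map is the fixed affine map $y\mapsto 2y-1$, whose inverse hits $[\tfrac12,1]$. This buys $N=1$ and bypasses the distortion estimate entirely, at the cost of using one more cone axiom that the paper's argument leaves untouched. Both arguments are sound; yours is shorter and gives a simpler explicit constant.
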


\begin{proof}
We choose $\delta \in (0, 1)$ such that $a\delta^{1-\alpha}=\frac 1 2$. Note that 
\[
\int_0^\delta \varphi\, dm\le a\delta^{1-\alpha}\int_0^1 \varphi\, dm=a\delta^{1-\alpha}=\frac 1 2,
\]
and consequently
\[
\int_\delta^1 \varphi\, dm=1-\int_0^\delta \varphi\, dm\ge \frac 1 2.
\]
Since $\varphi$ is decreasing, 
\[
\varphi(x)\ge \varphi(\delta)\ge \frac{\int_\delta^1 \varphi\, dm}{1-\delta} \ge \frac{1}{2(1-\delta)}, \quad \forall x\in (0, \delta].
\]
Let $b_p^\omega$ be as in~\eqref{b_pt}. Since $b_{p+1}^\omega \le C_\alpha p^{-1/\alpha}$ (see~\eqref{first}), there exists $N\in \mathbb N$ such that 
\[
b_{N-1}^\omega <\delta, \quad \text{for  $\omega \in \Omega'$.}
\]
Note that $(0, b_{N-1}^\omega]$ is the far-left interval of monotonicity for $T_\omega^N$ that is mapped bijectively onto $(0, 1]$. For $x\in (0, 1]$, by $y_N(\omega, x)$ we denote the unique  element from 
$(0, b_{N-1}^\omega]$ such that $T_\omega^N(y_N(\omega, x))=x$. Then, since $\varphi\ge 0$ we have 
\[
(\mathcal L_\omega^N \varphi)(x)\ge \frac{\varphi(y_N(\omega, x))}{ (T_\omega^N)'(y_N(\omega, x))}\ge \frac{1}{2(1-\delta) (T_\omega^N)'(y_N(\omega, x))}\ge \frac{1}{2(1-\delta)3^N}=:c>0.
\]
\end{proof}

\subsection{Proof of Lemma~\ref{bpt}}\label{controlbpt}
The first assertion of the lemma follows from~\cite[Proposition 3.13]{KL}. We now establish the second conclusion. Using that $\gamma \mapsto T_\gamma(x)$ is decreasing for $x\in (0, \frac 1 2]$ and that $x\mapsto T_\gamma(x)$ is increasing on $[0, \frac 1 2]$, one can show that $b_p^\omega\ge b_p$ where 
$b_p$ denotes the preimage of $\frac 1 2$ under the first branch of the composition of $T_{\underline{\alpha}}$ with itself $p$-times. The desired conclusion now follows from~\cite[Lemma 2.2]{L}.
\hfill $\qedsymbol$

\subsection{Proof of Lemma~\ref{newlem1}}\label{Lem19}
We will need the following distortion bound that was established in~\cite[Corollary 3.3.]{bose2021random}. We stress that although in~\cite{bose2021random} the authors consider random i.i.d compositions of LSV maps, the proof of~\cite[Corollary 3.3.]{bose2021random} applies to our setting as well. 
\begin{lemma}\label{lem:distortion}  There exists $C_{\alpha, \underline{\alpha}}> 0$  such that
	for any $\omega \in \Omega'$, $n\in \N$ and an interval $[x,y] \subset [0,1)$ that is bijectively mapped by $T_\omega^n$ onto $T_\omega^n ([x,y]) \subset [\tfrac12, 1)$, we have
	\begin{align}\label{eq:distort}
		| \log (T_\omega^n)'(x) - \log (T_\omega^n)'(y) | \le C_{\alpha, \underline{\alpha}} | T_\omega^n (x) - T_\omega^n (y) |.
	\end{align}
\end{lemma}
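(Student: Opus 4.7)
The plan is to adapt the classical Liverani--Saussol--Vaienti bounded distortion argument to the sequential composition setting. I would first apply the chain rule and the mean value theorem to reduce the estimate to a sum of one-step contributions, then deploy a specific LSV nonlinearity bound, and finally telescope the sum using the hypothesis that $T_\omega^n([x,y])\subset[\tfrac12, 1)$.

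Concretely, setting $u_k := T_\omega^k(x)$ and $v_k := T_\omega^k(y)$ for $0 \le k \le n$, the chain rule and the MVT yield
\[
|\log(T_\omega^n)'(x) - \log(T_\omega^n)'(y)| \le \sum_{k=0}^{n-1} \sup_{z \in [u_k, v_k]} \frac{|T_{\sigma^k\omega}''(z)|}{T_{\sigma^k\omega}'(z)} \, |u_k - v_k|.
\]
For the LSV family, a direct computation on the first branch (the second branch satisfies $T_\gamma'' \equiv 0$) together with the elementary inequality $T_\gamma(z) \le z T_\gamma'(z)$ yields the uniform nonlinearity bound
\[
\sup_{z \in (0, 1/2]} \frac{|T_\gamma''(z)|}{T_\gamma'(z)^2} \, T_\gamma(z) \le C_\alpha, \qquad \gamma \in (0, \alpha],
\]
which tames the singularity of $|T_\gamma''|/T_\gamma'$ near the neutral fixed point by trading the blow-up factor for the smallness of $T_\gamma(z)$ there.

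The crux is then to convert the right-hand side into a quantity controlled by $|u_n - v_n|$. Using the bijectivity of $T_{\sigma^k\omega}$ on $[u_k, v_k]$, the MVT yields $|u_k - v_k| \le |u_{k+1} - v_{k+1}| / T_{\sigma^k\omega}'(\xi_k)$ for some $\xi_k \in [u_k, v_k]$. Combined with the LSV nonlinearity bound, each summand is at most $C_\alpha |u_{k+1} - v_{k+1}|/T_{\sigma^k\omega}(\xi_k)$. A backwards induction then uses one-step distortion estimates on the subsequent inner iterates, together with the polynomial decay of the sequential Markov partition endpoints $b_p^\omega$ provided by Lemma~\ref{bpt}, to telescope the resulting sum and bound the whole expression by $C_{\alpha, \underline\alpha}|u_n - v_n|$.

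The main obstacle is the non-uniform expansion near the neutral fixed point at $0$: the ratio $|T_\gamma''|/T_\gamma'$ is singular there, and the standard exponential-contraction arguments for inverse branches of uniformly hyperbolic systems are not available. This is circumvented by the LSV nonlinearity estimate above, which compensates the singularity by the smallness of $T_\gamma$ near $0$, together with the hypothesis $T_\omega^n([x,y]) \subset [\tfrac12, 1)$, which forces the orbit to eventually escape any neighborhood of the neutral fixed point through the uniformly expanding second branch, thereby generating sufficient cumulative expansion to close the telescoped sum.
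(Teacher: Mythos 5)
Your proposal correctly sets up the chain-rule/MVT decomposition, and the nonlinearity bound $\sup_{z\in(0,1/2]}\frac{|T_\gamma''(z)|\,T_\gamma(z)}{T_\gamma'(z)^2}\le C_\alpha$ is true. However, there is a genuine gap at the crux, and it cannot be repaired within the estimates you have set up.

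Chasing your bounds, each summand is controlled (up to a factor $3C_\alpha$) by $\frac{|u_{k+1}-v_{k+1}|}{u_{k+1}}$, and you then assert that a backwards induction together with the decay of $b_p^\omega$ bounds the total by $C_{\alpha,\underline\alpha}|u_n-v_n|$. This is false: take the autonomous case $\beta(\omega)\equiv\alpha$ and $[x,y]=(b_n,b_{n-1})$ a full Markov cell, so $T_\alpha^n$ maps it onto $(\tfrac12,1)$ and $|u_n-v_n|=\tfrac12$. Then $u_{k+1}=b_{n-k-1}$, $v_{k+1}=b_{n-k-2}$, and since $b_p\sim cp^{-1/\alpha}$ one gets $\frac{b_{n-k-2}-b_{n-k-1}}{b_{n-k-1}}\sim\frac{1}{\alpha(n-k)}$, so the sum is of order $\alpha^{-1}\log n$, unbounded in $n$, while the right-hand side stays $\tfrac12$. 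The root cause is that your nonlinearity estimate is too lossy: $\frac{|T_\gamma''(z)|\,T_\gamma(z)}{T_\gamma'(z)^2}$ does not merely stay bounded — it \emph{vanishes} at $0$ like $z^\gamma$, and that decay is exactly what makes the sum converge. Replacing it by a uniform constant throws away the factor that saves the argument.

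For what it is worth, the paper's entire proof of this lemma is a citation: it quotes \cite[Corollary 3.3.]{bose2021random} and remarks that the i.i.d.\ argument there applies verbatim to the present cocycle setting. The mechanism in that argument retains the $z^\gamma$ decay. Concretely: since $\log T_\gamma'(z)=\log\bigl(1+2^\gamma(1+\gamma)z^\gamma\bigr)$ is Lipschitz as a function of the coordinate $z^\gamma$, the $k$-th one-step contribution is at most $4\,(v_k^{\gamma_k}-u_k^{\gamma_k})$. Then the explicit identity
\[
\log\frac{T_\gamma(v)}{T_\gamma(u)}=\log\frac{v}{u}+\log\frac{1+2^\gamma v^\gamma}{1+2^\gamma u^\gamma}
\]
on the first branch (together with the elementary monotonicity $\tfrac{2v-1}{2u-1}\ge\tfrac{v}{u}$ on the second) shows that $\log(v_k/u_k)$ is nondecreasing in $k$, with per-step increment at least $\tfrac12(v_k^{\gamma_k}-u_k^{\gamma_k})$ whenever $u_k\le\tfrac12$. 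Telescoping, and using $u_n\ge\tfrac12$, gives $\sum_k(v_k^{\gamma_k}-u_k^{\gamma_k})\le 2\log(v_n/u_n)\le 4|v_n-u_n|$, which closes the estimate with a constant independent of $n$, $\omega$, and the parameter sequence.
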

In addition, as in~\cite{leppanen2017functional}  for each $\omega \in \Omega'$, $n \in \N$, there exists a (mod $m$) partition 
$$
\{  I_{\omega}^{(n)} (\theta): \ \theta \in \{1, \ldots, 2^n\} \}
$$
of $[0,1]$ into open subintervals $I_{\omega}^{(n)} (\theta)$ such that $T_\omega^n$ maps each $ I_{\omega}^{(n)}$ diffeomorphically
onto $(0,1)$. We let $I_{\omega}^{(n)}(1) = (0, b^\omega_{n-1} )$ be the interval whose left end-point is zero. By \cite[Eq. (4)]{leppanen2017functional}, we have
\begin{align}\label{eq:compare_intervals}
|I_\omega^{(n)}(\theta)| \le |I_{\omega}^{(n)}(1)| =  b_{n-1}^\omega \quad \text{for $ \theta=1, \ldots, 2^n$,}
\end{align}
where $|I_\omega^{(n)}(\theta)|$ denotes the length of $I_\omega^{(n)}(\theta)$.
We are now in a position to prove Lemma~\ref{newlem1}. Suppose that $x \in I_{\omega}^{(\ell + k + 1)}(\theta)$, $\theta \in \{1,\ldots, 2^{\ell + k + 1} \}$.
If $T_\omega^\ell(x) \in (b_{k}^{\sigma^\ell \omega} ,  b_{k-1}^{\sigma^\ell \omega} )$, then $T_\omega^{\ell}$ maps $I_{\omega}^{(\ell + k + 1)}(\theta)$ bijectively onto $( b_k^{\sigma^\ell \omega}, b_{k-1}^{\sigma^\ell \omega} )$, and $T_\omega^{\ell + k}$ maps $I_{\omega}^{(\ell + k + 1)}(\theta)$ bijectively onto
$(1/2,1)$. By \eqref{eq:distort},
\begin{align*}
	\frac12 &= \int_{  I_\omega^{( k + \ell + 1 )} ( \theta ) }   (T_\omega^{\ell + k})' ( \xi) \, d \xi = 
	 (T_\omega^{\ell + k})' ( x ) \int_{  I_\omega^{( k + \ell + 1 )} ( \theta ) }   \frac{  (T_\omega^{\ell + k})' ( \xi) }{  (T_\omega^{\ell + k})' ( x ) }   \, d \xi \\
	 &\le C_{ \alpha, \underline{\alpha} }   (T_\omega^{\ell + k})' ( x ) |   I_\omega^{( k + \ell + 1 )} (\theta)  |.
\end{align*}
Using \eqref{eq:compare_intervals}, we obtain
$$ 
\frac{1}{ ( T_{\sigma^\ell \omega}^k )' ( T^\ell_\omega (x) ) }  \cdot \frac{1}{  (T_\omega^\ell)'(x)  }  =
\frac{1}{(T_\omega^{\ell + k})' ( x )} \le    C_{ \alpha, \underline{\alpha} }   |   I_\omega^{( k + \ell + 1 )} (\theta)  | \le 
 C_{ \alpha, \underline{\alpha} }  b_{ k + \ell  }^\omega.
$$
Since $k \le p$, we conclude that
$$
\frac{1}{  (T_\omega^\ell)'(x)  } \le  C_{p, \alpha, \underline{\alpha} } b^\omega_{ k + \ell }.
$$
Now the second inequality in the statement of Lemma~\ref{newlem1} follows readily from~\eqref{first}.
\hfill $\qedsymbol$

\subsection{Proof of Lemma~\ref{lem:dist-2}}\label{Lem20}
We want to estimate 
\[
\frac{(T_\omega^\ell)''(x)}{((T_\omega^\ell)'(x))^2}
\]
for $x\in (0, 1]$ with $T_\omega^\ell(x) \in (b_{k}^{\sigma^\ell \omega} ,  b_{k-1}^{\sigma^\ell \omega} )$. We have 
\[
\frac{(T_\omega^\ell)''(x)}{((T_\omega^\ell)'(x))^2}=\sum_{j=0}^{\ell-1}\frac{T_{\sigma^j \omega}''(T_\omega^j(x))}{T_{\sigma^j \omega}'(T_\omega^j(x))}\cdot \frac{1}{(T_{\sigma^j \omega}^{\ell-j})'(T_\omega^j(x))} \le \sum_{j=0}^{\ell-1}\frac{T_{\sigma^j \omega}''(T_\omega^j(x))}{(T_{\sigma^j \omega}^{\ell-j})'(T_\omega^j(x))}.
\]
We first consider the case when there exists $0\le j\le \ell$ such that $T_\omega^j (x)\in (\frac 1 2, 1]$.
Let
\[
\left \{0\le j\le \ell: \ T_\omega^j(x)\in (\frac 1 2, 1]\right \}=\{\ell_1, \ldots, \ell_Q\},
\]
for some $Q\in \N$ with $\ell_1<\ldots <\ell_Q$.

\noindent\textbf{Case 1: $0 \le j < \ell_1$}. By the definition of $\ell_1$, we have 
	$T_{\omega}^j(x) \in [ b_{ \ell_1 - j }^{\sigma^j \omega},  b_{ \ell_1 - j - 1 }^{\sigma^j \omega}   ]$. Therefore,
	$$
	T_{\sigma^j \omega}''( T_\omega^j (x)  ) \le C_{\alpha, \underline{\alpha}}  ( b_{ \ell_1 - j }^{\sigma^j \omega})^{ \beta(\sigma^j \omega) - 1 }.
	$$
	Using Lemma~\ref{newlem1} together with~\eqref{first}, we obtain 
	\begin{align}\label{eq:l20-1}
		\begin{split}
		\frac{T_{\sigma^j \omega}''(T_\omega^j(x))}{(T_{\sigma^j \omega}^{\ell-j})'(T_\omega^j(x))} &\le C_{\alpha, \underline{\alpha}} ( b_{\ell_1 - j}^{\sigma^j \omega}   )^{  \beta(\sigma^j \omega) - 1 } 
		\cdot \frac{1}{ (T_{\sigma^j\omega}^{ \ell - j } )'( T_\omega^j(x)  )} \\
		&= C_{\alpha, \underline{\alpha}}( b_{\ell_1 - j}^{\sigma^j \omega}   )^{  \beta(\sigma^j \omega) - 1 }  \frac{1}{  (T_{\sigma^{\ell_1}\omega}^{\ell - \ell_1})' (  T_\omega^{\ell_1}(x)  ) }    \frac{1}{ ( T_{\sigma^j \omega}^{\ell_1 - j} )'( T_\omega^j (x) ) } \\
		&\le C_{p, \alpha, \underline{\alpha}} 
		( b_{\ell_1 - j}^{\sigma^j \omega}   )^{  \beta(\sigma^j \omega) - 1 } \cdot 
		( \ell - \ell_1+1)^{  - 1 / \alpha  } \cdot b_{\ell_1 - j}^{\sigma^j \omega} \\
		&\le C_{p, \alpha, \underline{\alpha}} 
		( b_{\ell_1 - j}^{\sigma^j \omega}   )^{  \beta (\sigma^j \omega) } \cdot 
		( \ell - \ell_1+1)^{  - 1 / \alpha  } \\
		&\le C_{p,\alpha, \underline{\alpha}}  ( \ell_1 - j )^{  - \underline{\alpha} / \alpha } ( \ell - \ell_1+1)^{  - 1 / \alpha  },
		\end{split}
	\end{align}
	except possibly for a finite number of points.
    
\noindent\textbf{Case 2:} $j = \ell_q$ for $1\le q\le Q$. In this case, one has
\[
\frac{T_{\sigma^j \omega}''(T_\omega^j(x))}{(T_{\sigma^j \omega}^{\ell-j})'(T_\omega^j(x))}=0.
\]

\noindent\textbf{Case 3:} $\ell_i < j < \ell_{i + 1}$ for $1 \le i \le Q - 1$. 
	Similarly to Case 1, we have $T^j_\omega(x) \in [ b^{\sigma^j \omega}_{ \ell_{i+1} - j },   b^{\sigma^j \omega }_{ \ell_{i+1} - j - 1} ]$.
Hence, $T_{\sigma^j \omega}''(  T^j_\omega(x) ) \le C_{\alpha, \underline{\alpha}} (  b^{\sigma^j \omega}_{ \ell_{i+1} - j })^{ \beta(\sigma^j \omega) - 1 }$.
	Using Lemma~\ref{newlem1} and~\eqref{first},  we obtain
    \begin{align}\label{eq:l20-2}
	\begin{split}
	 \frac{T_{\sigma^j\omega}''(T_\omega^j(x))}{(T_{\sigma^j \omega}^{\ell-j})'(T_\omega^j(x))}&\le  C_{\alpha, \underline{\alpha}}  (  b^{\sigma^j \omega}_{ \ell_{i+1} - j })^{ \beta(\sigma^j \omega) - 1 } 
	 \frac{1}{ ( T^{ \ell - \ell_{i + 1} }_{\sigma^{ \ell_{i + 1}} \omega})' ( T_\omega^{ \ell_{i+1} } (x) ) } 
	 \frac{1}{ ( T_{\sigma^j \omega}^{ \ell_{i + 1} - j } )' (  T_\omega^j(x)  ) } \\
	 &\le C_{p, \alpha, \underline{\alpha}} \cdot (  b^{\sigma^j\omega}_{ \ell_{i+1} - j })^{ \beta(\sigma^j \omega) - 1 }  \cdot 
	 (1 + \ell -  \ell_{i+ 1} )^{- 1 / \alpha } \cdot b^{\sigma^j \omega}_{ \ell_{i+1} - j } \\
	 &\le C_{p, \alpha, \underline{\alpha}} \cdot (  b^{\sigma^j \omega}_{ \ell_{i+1} - j })^{ \underline{\alpha} }  \cdot 
	 (1 + \ell -  \ell_{i+ 1} )^{- 1 / \alpha } \\
	 &\le C_{p, \alpha, \underline{\alpha}} \cdot (  \ell_{i+ 1} - j )^{ - \underline{\alpha} / \alpha }  \cdot 
	 (1 + \ell -  \ell_{i+ 1} )^{- 1 / \alpha }.
	\end{split}
	\end{align}
	except possibly for a finite number of points.

\noindent\textbf{Case 4:} $\ell_Q < j < \ell$. Since 
	$T_\omega^\ell(x) \in (b_{k}^{\sigma^\ell \omega} ,  b_{k-1}^{\sigma^\ell \omega } ) = (b_k^{\sigma^{j + ( \ell - j) }\omega}, 
	b_{k-1}^{\sigma^{ j + ( \ell - j) }\omega}
	)$ and $\ell_Q$ is the last return 
	by time $\ell$, we must have $T_\omega^{j}(x) \in ( b^{\sigma^j \omega}_{ k + \ell - j }, 
	b^{\sigma^j \omega}_{ k + \ell - j } )$. Hence, again by Lemma~\ref{newlem1} and~\eqref{first}, one has 
	\begin{align}\label{eq:l20-3}
	\begin{split}
	 \frac{T_{\sigma^j \omega}''(T_\omega^j(x))}{(T_{\sigma^j \omega}^{\ell-j})'(T_\omega^j(x))} \le  C_{ \alpha, \underline{\alpha}}   (b^{\sigma^j \omega}_{ k + \ell - j } )^{  \underline{\alpha} - 1  }
	\frac{ 1 }{ (T_{\sigma^j \omega}^{\ell - j})'( T_\omega^j(x)  )}  
	\le  C_{p, \alpha, \underline{\alpha}}  (b^{\sigma^j \omega}_{ k + \ell - j } )^{  \underline{\alpha}  } \le 
	C_{p, \alpha, \underline{\alpha}} ( \ell - j)^{ - \underline{\alpha} / \alpha }.
	\end{split}
\end{align}

We now have that 
\[
\begin{split}
\frac{(T_\omega^\ell)''(x)}{((T_\omega^\ell)'(x))^2} &\le \sum_{j=0}^{\ell-1}\frac{T_{\sigma^j \omega}''(T_\omega^j(x))}{(T_{\sigma^j \omega}^{\ell-j})'(T_\omega^j(x))}\\
&\le C_{p, \alpha, \underline{\alpha}}\sum_{j=0}^{\ell_1-1}(\ell_1-j)^{-\underline{\alpha}/\alpha}+C_{p, \alpha, \underline{\alpha}}\sum_{i=1}^{Q-1}
(1+\ell-\ell_{i+1})^{-1/\alpha}\sum_{\ell_i<j<\ell_{i+1}}(\ell_{i+1}-j)^{-\underline{\alpha}/\alpha}\\
&\phantom{\le}+C_{p, \alpha, \underline{\alpha}}\sum_{\ell_Q<j<\ell}(\ell-j)^{-\underline{\alpha}/\alpha}.\\
\end{split}
\]
Observe that 
\[
\sum_{j=0}^{\ell_1-1}(\ell_1-j)^{-\underline{\alpha}/\alpha}\le C_{\alpha, \underline{\alpha}}\ell^{1-\underline{\alpha}/\alpha}.
\]
Next,
\[
\begin{split}
\sum_{i=1}^{Q-1}
(1+\ell-\ell_{i+1})^{-1/\alpha}\sum_{\ell_i<j<\ell_{i+1}}(\ell_{i+1}-j)^{-\underline{\alpha}/\alpha} &\le C_{\alpha, \underline{\alpha}}\sum_{i=1}^{Q-1} (1+\ell-\ell_{i+1})^{-1/\alpha} (\ell_{i+1}-\ell_i)^{1-\underline{\alpha}/\alpha}  \\
&\le C_{\alpha, \underline{\alpha}}\ell^{1-\underline{\alpha}/\alpha}\sum_{i=1}^{Q-1}(1+\ell-\ell_{i+1})^{-1/\alpha}\\
&\le C_{\alpha, \underline{\alpha}}\ell^{1-\underline{\alpha}/\alpha}\sum_{i=1}^\infty i^{-1/\alpha} \\
&\le C_{\alpha, \underline{\alpha}}\ell^{1-\underline{\alpha}/\alpha}.
\end{split}
\]
Finally, 
\[
\sum_{\ell_Q<j<\ell}(\ell-j)^{-\underline{\alpha}/\alpha}\le C_{\alpha, \underline{\alpha}}\ell^{1-\underline{\alpha}/\alpha}.
\]
The desired conclusion now follows from the last three estimates. 

In the case when $T^j_\omega(x)\le \frac 1 2$ for $0\le j\le \ell$, we have that $T^j(x)\le b_{\ell-j}^{\sigma^j\omega}$ for $0\le j\le \ell$. Therefore,
\[
\begin{split}
\frac{(T_\omega^\ell)''(x)}{((T_\omega^\ell)'(x))^2} \le \sum_{j=0}^{\ell-1}\frac{T_{\sigma^j \omega}''(T_\omega^j(x))}{(T_{\sigma^j \omega}^{\ell-j})'(T_\omega^j(x))}&\le C_{p, \alpha, \underline{\alpha}}\sum_{j=0}^{\ell-1}(b_{\ell-j}^{\sigma^j \omega})^{\beta(\sigma^j \omega)-1}b_{\ell-j+k}^{\sigma^j \omega}\\
&\le C_{p, \alpha, \underline{\alpha}}\sum_{j=0}^{\ell-1} (b_{\ell-j}^{\sigma^j \omega})^{\underline{\alpha}}\\
&\le C_{p, \alpha, \underline{\alpha}}\sum_{j=0}^{\ell-1} (\ell-j)^{-\underline{\alpha}/\alpha}\\
&\le C_{p, \alpha, \underline{\alpha}}\ell^{1-\underline{\alpha}/\alpha},
\end{split}
\]
yielding the desired claim.
    \hfill $\qedsymbol$

\section{Quenched linear response for $L^q$ observables}\label{sec:lqObs}
We begin by establishing an auxiliary result. 
Let $\beta \colon \Omega \to (0, 1)$ be a measurable map such that $\alpha:=\esssup_{\omega \in \Omega}\beta(\omega)<1$. Moreover, let $T_\omega$ be the LSV map with the parameter $\beta(\omega)$ and $\L_\omega$ the transfer operator associated with $T_\omega$. Set $\tilde m$ as the measure on $[0, 1]$ given by $d\tilde m=g\, dm$, where $g(x)=x^{-\alpha}$. Finally, let 
\[
\tilde {\L}_\omega (\varphi)=g^{-1}\L_\omega(g\varphi), \quad \omega \in \Omega, \ \varphi \in L^1(\tilde m).
\]
Observe that 
\[
\tilde{\L}_\omega^n(\varphi)=g^{-1}\L_\omega^n(g\varphi), \quad \omega \in \Omega, \ \varphi \in L^1(\tilde m),
\]
where 
\[
\tilde{\L}_\omega^n:=\tilde {\L}_{\sigma^{n-1}\omega} \circ \ldots \circ \tilde{\L}_\omega.
\]
The proof of the following result is based on the arguments in the proof of~\cite[Proposition 3.5.]{nicol2021large}. We include it for the sake of completeness. 
\begin{proposition}\label{813p}
    Let $\Omega'\subset \Omega$ be a $\sigma$-invariant set such that $\beta(\omega)\le \alpha$ for $\omega \in \Omega'$. Then, the following holds:
    \begin{enumerate}
\item there exists $C_\alpha>0$ such that  for $\varphi \colon [0, 1]\to \R$ bounded, $\omega \in \Omega'$ and $n\in \N$,
\begin{equation}\label{assert1}
    \|\tilde{\L}_\omega^n(\varphi)\|_{L^\infty(\tilde m)}\le C_\alpha \|\varphi\|_{L^\infty(\tilde m)};
\end{equation}
\item for $r\ge 1$ there is $C_{r, \alpha}>0$ such that for $\varphi, \psi \in \C_\ast(a)$  with $m(\varphi)=m(\psi)$, $\omega \in \Omega'$ and $n\in \N$,
\begin{equation}\label{assert2}
    \|\tilde{\L}_\omega^n[g^{-1}(\varphi-\psi)]\|_{L^r(\tilde m)}\le C_{r, \alpha}(\|\varphi\|_{L^1(m)}+\|\psi\|_{L^1(m)})n^{\frac 1 r (-1/\alpha+1)}.
\end{equation}
    \end{enumerate}
\end{proposition}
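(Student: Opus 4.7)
The plan is to establish the two endpoint estimates $r=\infty$ (which is essentially the first assertion) and $r=1$ (which is essentially Theorem~\ref{DEC} transported via the change of density) and then obtain the intermediate bounds by log-convexity of $L^r$-norms. Throughout I shall use that $\tilde m$ and $m$ are equivalent on $(0,1]$, so $L^\infty(\tilde m)$ and $L^\infty(m)$ coincide as normed spaces, and the pointwise control provided by Remark~\ref{n06} for elements of $\mathcal C_*(a)$, together with the fact (Remark~\ref{181}) that $g(x)=x^{-\alpha}$ itself lies in $\mathcal C_*(a)$ up to a multiplicative constant.

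For the first assertion, the idea is to exploit the cone $\mathcal C_*(a)$. For bounded $\varphi$ we have $|g\varphi|\le \|\varphi\|_{L^\infty(\tilde m)}g$ pointwise, so that
\[
|\tilde{\mathcal L}_\omega^n(\varphi)(x)|\le \|\varphi\|_{L^\infty(\tilde m)}g^{-1}(x)\mathcal L_\omega^n g(x).
\]
By Lemma~\ref{lem2}, $\mathcal L_\omega^n g\in \mathcal C_*(a)$, and by~\eqref{intpres} its $L^1(m)$-norm equals $m(g)=1/(1-\alpha)$. Remark~\ref{n06} then gives $\mathcal L_\omega^n g(x)\le \frac{a}{1-\alpha}x^{-\alpha}=\frac{a}{1-\alpha}g(x)$ for $x\in (0,1]$. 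This yields~\eqref{assert1} with $C_\alpha=a/(1-\alpha)$.

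For the second assertion I would first handle the two endpoint cases. For $r=1$ a direct change of variables gives
\[
\|\tilde{\mathcal L}_\omega^n[g^{-1}(\varphi-\psi)]\|_{L^1(\tilde m)}=\int_0^1 |\mathcal L_\omega^n(\varphi-\psi)|\,dm,
\]
which by Theorem~\ref{DEC} is bounded by $C_\alpha(\|\varphi\|_{L^1(m)}+\|\psi\|_{L^1(m)})n^{1-1/\alpha}$ since $m(\varphi)=m(\psi)$. For $r=\infty$ I combine~\eqref{assert1} with the observation that Remark~\ref{n06} gives $g^{-1}\phi=x^{\alpha}\phi(x)\le a m(\phi)$ for every $\phi \in \mathcal C_*(a)$, whence $\|g^{-1}(\varphi-\psi)\|_{L^\infty(\tilde m)}\le a(\|\varphi\|_{L^1(m)}+\|\psi\|_{L^1(m)})$. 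Applied to $g^{-1}(\varphi-\psi)$, the first assertion then yields an $L^\infty(\tilde m)$-bound of the same form, with a constant depending only on $\alpha$.

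The final step is interpolation. Setting $f_n:=\tilde{\mathcal L}_\omega^n[g^{-1}(\varphi-\psi)]$, the elementary log-convexity estimate
\[
\|f_n\|_{L^r(\tilde m)}\le \|f_n\|_{L^1(\tilde m)}^{1/r}\,\|f_n\|_{L^\infty(\tilde m)}^{1-1/r}
\]
combined with the two endpoint bounds just established gives~\eqref{assert2}. There is no genuine obstacle here; the only thing to be careful about is the bookkeeping of the constants, and the verification that the identification $L^\infty(\tilde m)=L^\infty(m)$ is indeed legitimate on $(0,1]$ (which it is, since the two measures are mutually absolutely continuous with positive finite density away from $0$, and the point $\{0\}$ has $m$-measure zero).
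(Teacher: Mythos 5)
Your proof is correct and follows essentially the same route as the paper's: you bound the $L^\infty(\tilde m)$-norm via $\L_\omega^n g$ and Remark~\ref{n06}, identify the $r=1$ case with Theorem~\ref{DEC} through the change of density, bound the $L^\infty(\tilde m)$-norm of $g^{-1}(\varphi-\psi)$ via the cone estimate, and interpolate. (One small remark: by Remark~\ref{181}, $g(x)=x^{-\alpha}$ lies in $\C_\ast(a)$ exactly, not just up to a constant, so there is no normalization to chase.)
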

\begin{proof}
Take $\varphi \colon [0, 1]\to \R$ bounded. Then,
\[
|\tilde{\L}_\omega^n (\varphi)(x)|=x^\alpha |\L_\omega^n (g\varphi)(x)|\le \|\varphi\|_{L^\infty(m)}x^{-\alpha} \L_\omega^n(g)(x)\le \frac{a}{1-\alpha}\|\varphi\|_{L^\infty(m)},
\]
as $\L_\omega^n (g)\in \C_\ast(a)$ (see Remarks~\ref{n06} and~\ref{181}). We conclude that~\eqref{assert1} holds.

Next, we observe that for $r=1$, \eqref{assert2} follows directly from~\eqref{dec} as \[
\|\tilde{\L}_\omega^n[g^{-1}(\varphi-\psi)]\|_{L^1(\tilde m)}=\|\L_\omega^n(\varphi-\psi)\|_{L^1(m)}.
\]
For arbitrary $r\ge 1$, we have 
\begin{equation}\label{x-x}
\|\tilde{\L}_\omega^n[g^{-1}(\varphi-\psi)]\|_{L^r(\tilde m)}\le \|\tilde{\L}_\omega^n[g^{-1}(\varphi-\psi)]\|_{L^\infty(\tilde m)}^{1-1/r} \cdot \|\tilde{\L}_\omega^n[g^{-1}(\varphi-\psi)]\|_{L^1(\tilde m)}^{1/r}.
\end{equation}
Since $\|g^{-1}(\varphi-\psi)\|_{L^\infty(\tilde m)}\le a(m(\varphi)+m(\psi))$, \eqref{assert2} follows from~\eqref{assert1} and the previously discussed case $r=1$.
\end{proof}

Now let us assume that we are in the same setting as in Section~\ref{S:LinRep}.

\begin{theorem}\label{thm:LqLR}
For  $\omega \in \Omega'$ and $\psi\in L^q(\tilde m)$ with $q>\frac{1-\alpha}{1-2\alpha}$, we have that~\eqref{eq:LR} holds.
\end{theorem}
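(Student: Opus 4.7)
The strategy is to revisit the proof of Theorem~\ref{thm:LR} and replace every instance of $\|\psi\|_{L^\infty(m)}$ by $\|\psi\|_{L^q(\tilde m)}$. The key mechanism is the elementary identity
\[
\int_0^1 \psi\, F\, dm \;=\; \int_0^1 \psi\, (g^{-1}F)\, d\tilde m,
\]
combined with H\"older's inequality in $\tilde m$ with conjugate exponent $q'$. Whenever the original proof estimated $\lvert\int_0^1 \psi\, \L_\omega^k(\varphi-\bar\varphi)\, dm\rvert$, for $\varphi,\bar\varphi \in \C_\ast(a)$ with $m(\varphi)=m(\bar\varphi)$, by $\|\psi\|_{L^\infty(m)}$ times the decay $k^{1-1/\alpha}$ from Theorem~\ref{DEC}, the substituted estimate reads
\[
\left|\int_0^1 \psi\, \tilde\L_\omega^k\bigl(g^{-1}(\varphi-\bar\varphi)\bigr)\, d\tilde m\right|\le \|\psi\|_{L^q(\tilde m)}\, \bigl\|\tilde\L_\omega^k\bigl(g^{-1}(\varphi-\bar\varphi)\bigr)\bigr\|_{L^{q'}(\tilde m)},
\]
which by Proposition~\ref{813p}\eqref{assert2} is bounded by $C\|\psi\|_{L^q(\tilde m)}\, k^{-\gamma}$ with $\gamma:=(1-1/q)(1/\alpha-1)$. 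A direct computation shows that $q > \frac{1-\alpha}{1-2\alpha}$ is equivalent to $\gamma > 1$; the limit $q=\infty$ recovers the condition $\alpha<1/2$ underlying Theorem~\ref{thm:LR}.

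The first step is to verify absolute convergence of the series in~\eqref{eq:LR}. By Lemma~\ref{estim}, each derivative $(X_{\beta(\sigma^{-(i+1)}\omega)} N_{\beta(\sigma^{-(i+1)}\omega)}(h(\sigma^{-(i+1)}\omega)))'$ is a difference of two elements of $\C_\ast(a)\cap C^1(0,1]$ with equal (and uniformly bounded) integrals, so the mechanism above bounds the $i^{\text{th}}$ summand by $C\|\psi\|_{L^q(\tilde m)}\, i^{-\gamma}$, summable precisely under $\gamma > 1$.

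Next I would replay each estimate of Theorem~\ref{thm:LR}. The bound on $(II)_{\omega,n}$ becomes $C\|\psi\|_{L^q(\tilde m)}\, n^{-\gamma}$, controlled by $C\varepsilon\|\psi\|_{L^q(\tilde m)}$ for $n$ suitably large. The second-order remainder~\eqref{E2} is handled via Lemma~\ref{HZU}, whose second conclusion writes $\partial_\gamma^2\L_\gamma(\phi)$ as a difference of $\C_\ast(a)$-functions with equal integrals, yielding a total bound $C|\varepsilon|\|\psi\|_{L^q(\tilde m)}\sum_j j^{-\gamma}<\infty$. The finite-time terms~\eqref{eq:27dev}(I) and~\eqref{eq:27dev}(II) transfer using the boundedness of $\tilde\L_\omega^j$ on $L^{q'}(\tilde m)$, which follows from Riesz--Thorin interpolation between the $L^\infty(\tilde m)$-bound~\eqref{assert1} and the $L^1(\tilde m)$-contraction $\|\tilde\L_\omega\varphi\|_{L^1(\tilde m)} = \|\L_\omega(g\varphi)\|_{L^1(m)} \le \|g\varphi\|_{L^1(m)} = \|\varphi\|_{L^1(\tilde m)}$.

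The main obstacle is the last block of the proof, involving the pointwise distortion estimates of Lemmas~\ref{newlem1} and~\ref{lem:dist-2}. There, the original argument arrived at the critical bound $\ell^{2-(1+\underline\alpha)/\alpha}$ on~\eqref{eq:almostlast2}(II), which vanishes thanks to $\alpha<1/2$. In the $L^q$ setting, after pulling $\|\psi\|_{L^q(\tilde m)}$ out via H\"older, the pointwise distortion factor $\ell^{1-\underline\alpha/\alpha}$ persists, while the $L^1(m)$-decay $\|\L^{l-1-j-\ell}(1-h(\sigma^{-l}\omega))\|_{L^1(m)}\lesssim \ell^{1-1/\alpha}$ is replaced by its weighted analog $\|g^{-1}\L^{l-1-j-\ell}(1-h(\sigma^{-l}\omega))\|_{L^{q'}(\tilde m)}\lesssim \ell^{-\gamma}$ from Proposition~\ref{813p}\eqref{assert2}. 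This produces the modified bound $\ell^{1-\underline\alpha/\alpha-\gamma}$, which vanishes whenever $\gamma > 1-\underline\alpha/\alpha$, a condition already implied by $\gamma > 1$ (since $\underline\alpha>0$). Consequently the sharp threshold $q>\frac{1-\alpha}{1-2\alpha}$ is exactly what is needed for the substituted argument to close, delivering the same linear response formula as in Theorem~\ref{thm:LR}.
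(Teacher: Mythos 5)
Your overall framework aligns with the paper: pull $\|\psi\|_{L^q(\tilde m)}$ out via H\"older in $\tilde m$, use Proposition~\ref{813p} for the decay of $\tilde\L$-iterates of cone differences, and observe that $\gamma:=(1-1/q)(1/\alpha-1)>1$ is exactly the stated threshold. The treatment of the series convergence and of~\eqref{E2} matches the paper, and your Riesz--Thorin observation for boundedness of $\tilde\L_\omega^j$ on $L^{q'}(\tilde m)$ is consistent with the interpolation inequality~\eqref{x-x} that the paper itself records.

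However, there is a genuine gap in your final block. You claim that after pulling $\|\psi\|_{L^q(\tilde m)}$ out, in the estimate of~\eqref{eq:almostlast2}(II) the $L^1(m)$ factor $\|\L^{l-1-j-\ell}(1-h(\sigma^{-l}\omega))\|_{L^1(m)}\lesssim \ell^{1-1/\alpha}$ is ``replaced by its weighted analog'' $\|g^{-1}\L^{l-1-j-\ell}(1-h)\|_{L^{q'}(\tilde m)}\lesssim\ell^{-\gamma}$, yielding a bound $\ell^{1-\underline\alpha/\alpha-\gamma}$. This substitution does not follow from the structure of the argument: the distortion factor $(T^\ell_{\cdot})''/((T^\ell_{\cdot})')^2$ enters as a multiplier \emph{inside} $\L^{\ell}_{\cdot}$, which is then hit by the outer $\L^j_{\cdot}$; there is no factorization of the resulting quantity as (pointwise distortion power) $\times$ (weighted norm of $\varphi-h$). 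In fact, the distortion argument bounding~\eqref{eq:almostlast2}(II) lives several layers deep inside the estimate of $\|G_j\|_{L^1(m)}$ (where $G_j$ is the $(XN(\cdot))'$-difference), and is decoupled from $\psi$ entirely: $\psi$ is paired with $\L^j_{\cdot}G_j$ only at the outermost level.

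What the paper actually does is cleaner and avoids re-deriving anything at the distortion level. In~\eqref{eq:27dev}(I) and~\eqref{eq:27dev}(II), after pulling $\|\psi\|_{L^q(\tilde m)}$ out, it bounds $\|\tilde\L^j_{\cdot}(g^{-1}G_j)\|_{L^r(\tilde m)}$ via the interpolation inequality~\eqref{x-x}: since $g^{-1}G_j$ has a uniform $L^\infty(\tilde m)$ bound (by Lemma~\ref{estim} and Remark~\ref{n06}) and $\|\tilde\L^j_{\cdot}(g^{-1}G_j)\|_{L^1(\tilde m)}=\|\L^j_{\cdot}G_j\|_{L^1(m)}\le\|G_j\|_{L^1(m)}$, one gets $\|\tilde\L^j_{\cdot}(g^{-1}G_j)\|_{L^r(\tilde m)}\le C\|G_j\|_{L^1(m)}^{1/r}$. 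Then the paper simply cites the fact, already established in the proof of Theorem~\ref{thm:LR}, that $\|G_j\|_{L^1(m)}\to 0$ for each fixed $j$. So Lemmas~\ref{newlem1} and~\ref{lem:dist-2} are never re-examined. Your exponent bookkeeping at that step ($\ell^{1-\underline\alpha/\alpha-\gamma}$ versus the paper's $\ell^{(2-(1+\underline\alpha)/\alpha)/r}$) is a symptom of this: you would need to prove a weighted-$L^{q'}$ version of those distortion lemmas, which you have not done and which is unnecessary.
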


\begin{proof}
The proof is similar to the proof of Theorem~\ref{thm:LR}, so we only give a sketch of the arguments. For $\omega \in \Omega'$, let $L=L(\omega)$ be given by~\eqref{eq:L}. We first show that the series in~\eqref{eq:L} converges. Let $r\ge 1$ be such that $1=\frac 1 r+\frac 1 q$. By Lemma~\ref{estim}, Proposition~\ref{813p} and the  H\"{o}lder inequality,  we have 
\[
\begin{split}
&\sum_{i=1}^\infty \delta(\sigma^{-(i+1)}\omega) \left |  \int_0^1 \psi \, \L_{\sigma^{-i}\omega}^i \left(X_{\beta(\sigma^{-(i+1)} \omega)}N_{\beta(\sigma^{-(i+1)}\omega)} (h(\sigma^{-(i+1)}\omega)) \right)'\, dm\right |\\
&\le \sum_{i=1}^\infty \|\psi \, \L_{\sigma^{-i}\omega}^i \left(X_{\beta(\sigma^{-(i+1)} \omega)}N_{\beta(\sigma^{-(i+1)}\omega)} (h(\sigma^{-(i+1)}\omega)) \right)'\|_{L^1(m)}\\
&\le \sum_{i=1}^\infty \left \|\psi\tilde{\L}_{\sigma^{-i}\omega}^i \left [g^{-1}
 \left(X_{\beta(\sigma^{-(i+1)} \omega)}N_{\beta(\sigma^{-(i+1)}\omega)} (h(\sigma^{-(i+1)}\omega)) \right)' \right ] \right \|_{L^1(\tilde m)}\\
 &\le \|\psi\|_{L^q(\tilde m)}\sum_{i=1}^\infty  \left \|\tilde{\L}_{\sigma^{-i}\omega}^i \left [g^{-1}
 \left(X_{\beta(\sigma^{-(i+1)} \omega)}N_{\beta(\sigma^{-(i+1)}\omega)} (h(\sigma^{-(i+1)}\omega)) \right)' \right ] \right \|_{L^r(\tilde m)} \\
 &\le C_{r, \alpha}\|\psi\|_{L^q(\tilde m)}\sum_{i=1}^\infty i^{\frac 1 r(-1/\alpha+1)}<+\infty,
\end{split}
\]
as $1<\frac{1-\alpha}{r\alpha}$. We conclude that the series in~\eqref{eq:L} converges.

 Arguing as in the proof of Theorem~\ref{thm:LR} and using Proposition~\ref{813p}, we obtain
\[
\int_0^1\psi (h_\epsilon(\omega)-h(\omega))\, dm=\int_0^1  \psi (\L_{\sigma^{-l}\omega}^\epsilon)^l 1\, dm-\int_0^1\psi \L_{\sigma^{-l}\omega}^l1\, dm+O(\|\psi \|_{L^q(\tilde m)}l^{\frac 1 r(1-1/\alpha})),
\]
which yields~\eqref{eq:diffder}. Consequently, again, we need to control the terms~\eqref{E1} and~\eqref{E2}. It follows from Lemma~\ref{HZU}, Proposition~\ref{813p} and the  H\"{o}lder inequality that
\[
\begin{split}
&\sum_{j=1}^{l-1} (\delta (\sigma^{-(j+1)}\omega))^2 \left |\int_0^1 \int_0^1 (1-t) \psi  (\L_{\sigma^{-j}\omega}^\epsilon )^j \partial_\gamma^2 \L_{\beta(\sigma^{-(j+1)}\omega)+t\epsilon \delta(\sigma^{-(j+1)}\omega)}(\L_{\sigma^{-l}\omega}^{l-1-j}(1))\, dm\, dt\right |\\
&\le \sum_{j=1}^{l-1}\int_0^1 \|\psi  (\L_{\sigma^{-j}\omega}^\epsilon )^j \partial_\gamma^2 \L_{\beta(\sigma^{-(j+1)}\omega)+t\epsilon \delta(\sigma^{-(j+1)}\omega)}(\L_{\sigma^{-l}\omega}^{l-1-j}(1))\|_{L^1(m)}\, dt \\
&=\sum_{j=1}^{l-1}\int_0^1 \left \|\psi  (\tilde{\L}_{\sigma^{-j}\omega}^\epsilon )^j \left [g^{-1}\partial_\gamma^2 \L_{\beta(\sigma^{-(j+1)}\omega)+t\epsilon \delta(\sigma^{-(j+1)}\omega)}(\L_{\sigma^{-l}\omega}^{l-1-j}(1))\right ]\right \|_{L^1(\tilde m)}\, dt\\
&\le \|\psi\|_{L^q(\tilde m)}\sum_{j=1}^{l-1}\int_0^1 \left \|  (\tilde{\L}_{\sigma^{-j}\omega}^\epsilon )^j \left [g^{-1}\partial_\gamma^2 \L_{\beta(\sigma^{-(j+1)}\omega)+t\epsilon \delta(\sigma^{-(j+1)}\omega)}(\L_{\sigma^{-l}\omega}^{l-1-j}(1))\right ]\right \|_{L^r(\tilde m)}\, dt \\
&\le C_{r, \alpha}\|\psi\|_{L^q(\tilde m)}\sum_{j=1}^{l-1} j^{\frac 1 r (-1/\alpha+1)},
\end{split}
\]
which implies that~\eqref{E2} converges to $0$ as $\epsilon \to 0$. 

In addition, \eqref{E1} can be estimated as in~\eqref{eq:27dev}. The term~\eqref{eq:27dev}(I) can be estimated as follows:
\[
\begin{split}
&\left |\sum_{j=0}^n \delta(\sigma^{-(j+1)}\omega) \int_0^1\psi ((\L_{\sigma^{-j}\omega}^\epsilon )^j -\L_{\sigma^{-j}\omega}^j)[(X_{\beta(\sigma^{-(j+1)}\omega)} N_{\beta(\sigma^{-(j+1)}\omega)}(\L_{\sigma^{-l}\omega}^{l-1-j}(1)))']\, dm \right| \\
&\le \|\psi\|_{L^q(\tilde m)}  \sum_{j=0}^n \left \|((\tilde{\L}_{\sigma^{-j}\omega}^\epsilon )^j -\tilde{\L}_{\sigma^{-j}\omega}^j)[g^{-1}(X_{\beta(\sigma^{-(j+1)}\omega)} N_{\beta(\sigma^{-(j+1)}\omega)}(\L_{\sigma^{-l}\omega}^{l-1-j}(1)))']\right \|_{L^r(\tilde m)}\\
&\le C_{r, \alpha}\|\psi\|_{L^q(\tilde m)}\sum_{j=0}^n \left \|((\tilde{\L}_{\sigma^{-j}\omega}^\epsilon )^j -\tilde{\L}_{\sigma^{-j}\omega}^j)[g^{-1}(X_{\beta(\sigma^{-(j+1)}\omega)} N_{\beta(\sigma^{-(j+1)}\omega)}(\L_{\sigma^{-l}\omega}^{l-1-j}(1)))']\right \|_{L^1(\tilde m)}^{\frac 1 r}\\
&\le C_{r, \alpha}\|\psi\|_{L^q(\tilde m)}\sum_{j=0}^n \left \|((\L_{\sigma^{-j}\omega}^\epsilon )^j -\L_{\sigma^{-j}\omega}^j)[(X_{\beta(\sigma^{-(j+1)}\omega)} N_{\beta(\sigma^{-(j+1)}\omega)}(\L_{\sigma^{-l}\omega}^{l-1-j}(1)))']\right \|_{L^1( m)}^{\frac 1 r},
\end{split}
\]
where we used Lemma~\ref{estim}, the H\"{o}lder inequality and~\eqref{x-x}. In the proof of Theorem~\ref{thm:LR} it is shown that for each $0\le j \le n$,
\[
\left \|((\L_{\sigma^{-j}\omega}^\epsilon )^j -\L_{\sigma^{-j}\omega}^j)[(X_{\beta(\sigma^{-(j+1)}\omega)} N_{\beta(\sigma^{-(j+1)}\omega)}(\L_{\sigma^{-l}\omega}^{l-1-j}(1)))']\right \|_{L^1( m)} \to 0
\] 
as $\epsilon \to 0$, which implies that 
\[
\left \|((\L_{\sigma^{-j}\omega}^\epsilon )^j -\L_{\sigma^{-j}\omega}^j)[(X_{\beta(\sigma^{-(j+1)}\omega)} N_{\beta(\sigma^{-(j+1)}\omega)}(\L_{\sigma^{-l}\omega}^{l-1-j}(1)))']\right \|_{L^1( m)}^{\frac 1 r} \to 0.
\]
as $\epsilon \to 0$. We conclude that~\eqref{eq:27dev}(I) converges to $0$ as $\epsilon \to 0$. The term~\eqref{eq:27dev}(II) can be treated analogously. 
\end{proof}

\section*{Acknowledgements}
D.D is grateful to J. Lepp{\"a}nen for patiently answering several questions related to~\cite{L} and for many useful comments. D.D is supported in part by University of Rijeka under the project uniri-iskusni-prirod-23-98 3046. 
C.G.T. is supported by the Australian Research Council (ARC), and acknowledges the hospitality and support of the University of Rijeka during a visit in 2023.
The authors acknowledge the referees for their valuable comments and suggestions, which led to significant enhancement of this work.


\begin{thebibliography}{9}
    \bibitem{AHNTV}
    R. Aimino, H. Hu, M. Nicol, A. T\"{o}r\"{o}k and S. Vaienti, \emph{Polynomial loss of memory for maps of the interval with a neutral fixed point}, Discrete Contin. Dyn. Syst. \textbf{35} (2015),  793--806.
    \bibitem{alves2020nonuniformly}
    J. F. Alves, \emph{Nonuniformly hyperbolic attractors—geometric and probabilistic aspects}, Springer Monogr. Math.
Springer, Cham, [2020], xi+259 pp.
\bibitem{bahsoun2016linear}
W. Bahsoun and B. Saussol, \emph{Linear response in the intermittent family: differentiation in a weighted $C^0$-norm}, Discrete Contin. Dyn. Syst. \textbf{36} (2016),  6657--6668.

\bibitem{bahsoun2020linear}
W. Bahsoun, B. Saussol and M. Ruziboev, \emph{Linear response for random dynamical systems}, Adv. Math. \textbf{364} (2020), 107011, 44 pp.


\bibitem{baladi2007susceptibility}
V. Baladi, \emph{On the susceptibility function of piecewise expanding interval maps}, Comm. Math. Phys. \textbf{275} (2007),  839--859.
\bibitem{baladi2018dynamical}
V. Baladi, \emph{Dynamical zeta functions and dynamical determinants for hyperbolic maps.
A functional approach}, Springer, Cham, 2018. xv+291 pp.
\bibitem{baladi2008linear}
V. Baladi and D. Smania, \emph{Linear response formula for piecewise expanding unimodal maps}, Nonlinearity \textbf{21} (2008),  677--711.
\bibitem{BT}
V. Baladi and M. Todd, \emph{Linear response for intermittent maps}, Comm. Math. Phys. \textbf{347} (2016),  857--874.
\bibitem{bose2021random}
C. Bose, A. Quas and M. Tanzi, \emph{Random composition of L-S-V maps sampled over large parameter ranges}, Nonlinearity \textbf{34} (2021),  3641--3675.
\bibitem{butterley2007smooth}
O. Butterley and C. Liverani, \emph{Smooth Anosov flows: correlation spectra and stability}, J. Mod. Dyn. \textbf{1}(2007),  301--322.
\bibitem{butterley2013robustly}
O. Butterley and C. Liverani, \emph{Robustly invariant sets in fiber contracting bundle flows}, J. Mod. Dyn. \textbf{7} (2013),  255--267.
\bibitem{crimmins2024spectral}
H. Crimmins and Y. Nakano, \emph{A spectral approach to quenched linear and higher-order response for partially hyperbolic dynamics}, Ergodic Theory Dynam. Systems \textbf{44} (2024),  1026--1057.
\bibitem{dolgopyat2004differentiability}
D. Dolgopyat, \emph{On differentiability of SRB states for partially hyperbolic systems}, Invent. Math. \textbf{155} (2004), 389--449.

\bibitem{dragivcevic2023quenched}
D. Dragi\v cevi\' c, P. Giulietti and J. Sedro, \emph{Quenched linear response for smooth expanding on average cocycles}, Comm. Math. Phys. \textbf{399} (2023),  423--452.

\bibitem{dragicevic2024effective}
D. Dragi\v cevi\' c and Y. Hafouta, \emph{Effective quenched linear response for random dynamical systems}, arXiv preprint arXiv:2403.04907, 2024

\bibitem{DH}
D. Dragi\v cevi\' c and Y. Hafouta, \emph{Iterated invariance principle for random dynamical systems}, Nonlinearity \textbf{38} (2025),  Paper No. 035016, 35 pp.


\bibitem{dragivcevic2023statistical}
D. Dragi\v cevi\' c and J. Sedro, \emph{Statistical stability and linear response for random hyperbolic dynamics}, Ergodic Theory Dynam. Systems \textbf{43} (2023),  515--544.

\bibitem{GTQuas}
C. Gonz\'{a}lez-Tokman and A. Quas, \emph{A semi-invertible operator Oseledets theorem}, Ergodic Theory Dynam. Systems \textbf{34} (2014),  1230--1272.

\bibitem{gouezel2006banach}
S. Gou{\"e}zel and C. Liverani, \emph{Banach spaces adapted to Anosov systems}, Ergodic Theory Dynam. Systems \textbf{26} (2006), 189--217.

\bibitem{HomburgEtAl22} A. J. Homburg, C. Kalle, M. Ruziboev, E. Verbitskiy, and B. Zeegers, \emph{Critical intermittency in random interval maps}, Comm. Math. Phys. \textbf{394} (2022), 1--37.

\bibitem{KalleZeegers23}
C. Kalle and B. Zeegers, \emph{Decay of correlations for critically intermittent systems}, Nonlinearity \textbf{36} (2023), 3319--3348.

\bibitem{korepanov2016linear} A. Korepanov, \emph{Linear response for intermittent maps with summable and non
summable decay of correlations}, Nonlinearity \textbf{29} (2016), 1735--1754.

\bibitem{KL} A. Korepanov and J. Lepp\"{a}nen, \emph{Loss of memory and moment bounds for nonstationary intermittent dynamical systems},  Comm. Math. Phys. \textbf{385} (2021), 905--935.

\bibitem{leppanen2017functional} J. Lepp\"{a}nen, \emph{Functional correlation decay and multivariate normal approximation for non-uniformly expanding maps}, Nonlinearity \textbf{30} (2017), 4239--4259.

\bibitem{L}  J. Lepp\"{a}nen, \emph{Linear response for intermittent maps with critical point}, Nonlinearity \textbf{37} (2024), Paper No. 045006, 39 pp.

\bibitem{LSV}
C. Liverani, B. Saussol and S. Vaienti, \emph{A probabilistic approach to intermittency}, Ergodic Theory Dynam. Systems \textbf{19} (1999),  671--685.

\bibitem{nicol2021large}
M. Nicol, F. P. Pereira and A. T{\"o}r{\"o}k, \emph{Large deviations and central limit theorems for sequential and random systems of intermittent maps}, Ergodic Theory Dynam. Systems \textbf{41} (2021),  2805--2832.

\bibitem{nicol2018central}
M. Nicol, A. T{\"o}r{\"o}k and S. Vaienti, \emph{Central limit theorems for sequential and random intermittent dynamical systems}, Ergodic Theory Dynam. Systems \textbf{38} (2018), 1127–1153.

\bibitem{Ruelle-diffSRB}
D. Ruelle, \emph{Differentiation of SRB states}, Comm. Math. Phys. \textbf{187} (1997), 227--241.

\bibitem{sedro2018regularity}
J. Sedro, \emph{A regularity result for fixed points, with applications to linear response}, Nonlinearity \textbf{31} (2018),  1417--1440.

\bibitem{sedro2021regularity}
J. Sedro and H. H. Rugh, \emph{Regularity of characteristic exponents and linear response for transfer operator cocycles}, Comm. Math. Phys. \textbf{383} (2021), 1243--1289.

\bibitem{young2017generalizations}
L. S. Young, \emph{Generalizations of SRB measures to nonautonomous, random, and infinite dimensional systems}, J. Stat. Phys. \textbf{166} (2017),  494--515.


\end{thebibliography}
\end{document}